\documentclass[12pt]{article}

\usepackage{amsmath}
\usepackage{amsthm}
\usepackage{amsfonts}
\usepackage{mathrsfs}
\usepackage{stmaryrd}
\usepackage{setspace}
\usepackage{fullpage}
\usepackage{amssymb}
\usepackage{breqn}
\usepackage{enumitem}
\usepackage{bbold}
\usepackage{authblk}
\usepackage{comment}
\usepackage{hyperref}
\usepackage{pgf,tikz}
\usepackage{graphicx}
\usepackage{subcaption}

\newtheorem{thm}{Theorem}[section]

\newtheorem{lemma}[thm]{Lemma}

\newtheorem{prop}[thm]{Proposition}
\newtheorem{claim}[thm]{Claim}

\newtheorem{clm}[thm]{Claim}

\usepackage{subcaption} 

\newcommand\ex{\ensuremath{\mathrm{ex}}}

\newcommand\cE{{\mathcal E}}
\newcommand\cF{{\mathcal F}}
\newcommand\cG{{\mathcal G}}
\newcommand\cH{{\mathcal H}}

\newcommand\cK{{\mathcal K}}

\newcommand\cM{{\mathcal M}}

\def\lf{\left\lfloor}
\def\rf{\right\rfloor}

\newtheorem*{thm*}{Theorem}
\newtheorem*{prop*}{Proposition}
\newcommand{\ignore}[1]{}

\title{On the Turán number of blow-ups of $\cF_5$}
\author{
Xiamiao Zhao\thanks{\small Department of Mathematical Sciences, Tsinghua University, Beijing 100084, China. Email:
\small zxm23@mails.tsinghua.edu.cn}\,,
\hspace{0.2em}
Xin Cheng\thanks{\small School of Mathematics and Statistics, Northwestern Polytechnical University and Xi'an-Budapest Joint Research Center for Combinatorics, Xi'an 710129, Shaanxi, P.R. China. Email:
\small \texttt{xincheng@mail.nwpu.edu.cn}.}\,, \hspace{0.2em}
D\'{a}niel Gerbner\thanks{\small Alfr\'ed R\'enyi Institute of Mathematics. Email:
\small \texttt{gerbner.daniel@renyi.hu}.}\,, \hspace{0.2em} 
Hilal Hama Karim$^\ddagger$\thanks{\small Department of Computer Science and Information Theory, Faculty of Electrical Engineering and Informatics, Budapest University of Technology and Economics, Műegyetem rkp. 3., H-1111 Budapest, Hungary. E-mail: \texttt{hilal.hamakarim@edu.bme.hu}.}\,, \hspace{0.2em}

Shujing Miao\thanks{\small School of Mathematics and Statistics, and Hubei Key Lab--Math. Sci., Central China Normal University, Wuhan 430079, China. Email:
\small \texttt{sjmiao2020@sina.com}.}\,, \hspace{0.2em}
Yichen Wang\thanks{\small \textit{Corresponding author.} Department of Mathematical Sciences, Tsinghua University, Beijing 100084, China. Email:
\small wangyich22@mails.tsinghua.edu.cn}\,,
\hspace{0.2em}
Junpeng Zhou\thanks{\small  Department of Mathematics, Shanghai University, Shanghai 200444, P.R. China. Email:
\small \texttt{junpengzhou@shu.edu.cn}.} \thanks{\small Newtouch Center for Mathematics of Shanghai University, Shanghai 200444, P.R. China.}}

\date{}

\begin{document}

\maketitle

\begin{abstract} 
Let $\cF_5$ denote the $3$-uniform hypergraph on the vertex set $\{f_1,f_2,\dots,f_5\}$ with hyperedges $\{f_1f_2f_3,f_1f_2f_4,f_3f_4f_5\}$.
Recently, Balogh, Clemen and Luo determined the Tur\'an number of a one-vertex blow-up of $\cF_5$, more specifically, they blow up the vertex $f_5$ to $t$ vertices, the resulting hypergraph is denoted by $\cF_5(f_5;t)$. They show that for infinitely many $t$, $\cF_5(f_5;t)$ has exponentially many extremal constructions and positive Tur\'an density.

In this paper, we determine the exact Tur\'an number of the hypergraph obtained by blowing up $f_3$ of $\cF_5$ to $t$ vertices and show that it also has exponentially many extremal constructions.

We also give a general upper bound and lower bound of the Tur\'an number of every blow-up of $\cF_5$.
For some special blow-ups of $\cF_5$, for example, $t$-disjoint copies of $\cF_5$, we determine the exact Tur\'an number.

We construct a hypergraph $\cF_{sim}(t)$ which is a subgraph of a blow-up of $\cF_5$, and is contained in the hypergraph obtained by adding any new hyperedge to the Tur\'an hypergraph (the balanced complete $3$-partite hypergraph), but its extremal construction is not the Tur\'an hypergraph.
We also determine the exact Tur\'an number of $\cF_{sim}(t)$.

\end{abstract}

{\noindent{\bf Keywords}: Tur\'{a}n number, hypergraph, blow up}

{\noindent{\bf AMS subject classifications:} 05D05, 05C65}

\section{Introduction}

For a $r$-graph $\cG$, we use $e(\cG)$ or $|\cG|$ to denote the number of hyperedges contained in $\cG$.
Let $\mathcal{F}$ be an $r$-uniform hypergraph ($r$-graph for short). A hypergraph $\cH$ is \textit{$\mathcal{F}$-free} if $\cH$ does not contain $\mathcal{F}$ as a subhypergraph. The \textit{Tur\'{a}n number} of $\mathcal{F}$ is the maximum number of hyperedges in an $n$-vertex $\mathcal{F}$-free $r$-graph, which is denoted by ${\rm{ex}}_r(n,\mathcal{F})$. The \textit{Tur\'an density} of $\cF$ is defined as $\pi(\cF):=\lim_{n\to\infty}\ex_r(n,\cF)/\binom{n}{r}$, and the existence of the limit follows from a simple averaging argument of Katona, Nemetz, and Simonovits \cite{KaNS}. Let $T_r(n,k)$ denote the $n$-vertex $r$-uniform Tur\'an hypergraph with $k$ parts, i.e., the complete $k$-partite $r$-graph on $n$ vertices with each part of order $\lfloor n/k\rfloor$ or $\lceil n/k\rceil$. 

A classical result in extremal graph theory is Tur\'{a}n's theorem \cite{Tu}, which determines the Tur\'{a}n number of complete graphs. The Erd\H{o}s-Stone-Simonovits theorem \cite{ESt,ESi} determines Tur\'{a}n density for $r$-chromatic graphs $F$. More precisely, $\text{ex}(n,F)=\big(\frac{r-2}{r-1}\big) \frac{n^2}{2}+o(n^2)$.
In particular, this gives the asymptotics of the Tur\'an number of every graph with chromatic number at least three. 

A \textit{blow-up} of a hypergraph $\cH$ is obtained by replacing each vertex $v_i$ by $t_i$ copies $v_i^1,\dots, v_i^{t_i}$, and each hyperedge $v_1v_2\dots v_r$ by the corresponding complete $r$-partite hypergraph, i.e., by the hyperedges $v_1^{a_1}v_2^{a_2}\dots v_r^{a_r}$, where $1\le a_i\le t_i$ for each $i\le r$. A well-known result is that a hypergraph and its blow-ups have the same Tur\'an density, see e.g. \cite{keevash} for a proof. Note that this statement and Tur\'an's theorem imply the Erd\H{o}s-Stone-Simonovits theorem. 

In this paper, we focus on a classical $3$-uniform hypergraph.
Let $\cF_5$ be the 3-uniform hypergraph with vertex set $\{f_1,f_2,f_3,f_4,f_5\}$ and hyperedge set $\{f_1f_2f_3,f_1f_2f_4,f_3f_4f_5\}$~(see Figure~\ref{fig: F5}).
Frankl and F\"uredi \cite{ff} showed that for $n\ge 3000$, $\ex_3(n,\cF_5)=|T_3(n,3)|$. Keevash and Mubayi \cite{kemu} improved the threshold 
to $n\ge 33$. 

Usually, when the Tur\'an density is zero for a hypergraph, we say it is the \textit{degenerate case}.
Otherwise, we say it is the \textit{non-degenerate case}.
Recently, Balogh, Clemen, and Luo \cite{bcl} found a hypergraph with exponentially many non-isomorphic extremal hypergraphs in the non-degenerate case. In the degenerate case, hypergraphs with that property were known, for example, some designs. 
An \textit{$(n,k,r,t)$-design} is an $n$-vertex $k$-uniform hypergraph with the property that every $r$-set is contained in exactly $t$ hyperedges. Clearly, these are the extremal hypergraphs for forbidding $t+1$ hyperedges sharing the same set of $r$ vertices. Keevash~\cite{Ke2} proved that such designs exist if $n$ is sufficiently large and some divisibility conditions hold.
In another paper~\cite{kee} he showed that under the above conditions, there are exponentially many non-isomorphic $(n,k,r,t)$-designs.

Let $\cF_5(f_i;t)$ be obtained from $\cF_5$ by blowing up the vertex $f_i$ to $t$ vertices~(see Figure~\ref{fig:F5(f3;t)}). For example, $\cF_5(f_3;2)=\{f_1f_2f_3,f_1f_2f_4,f_3f_4f_5,f_1f_2f_6,f_6f_4f_5\}$, where $f_6$ is a blow up vertex of $f_3$.

\begin{figure}[t]
    \centering
    \begin{subfigure}[b]{0.35\textwidth}
        \centering
        \includegraphics[width=\textwidth]{ 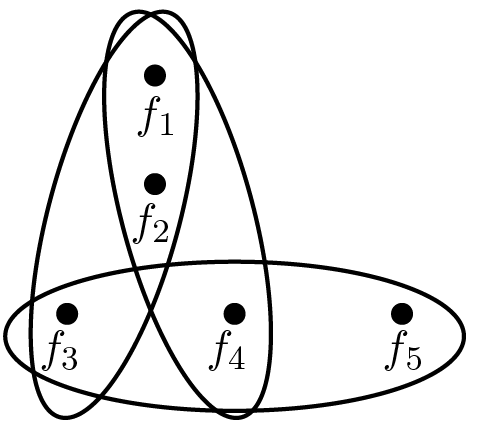}
        \caption{The hypergraph $\cF_5$.}
        \label{fig: F5}
    \end{subfigure}
	\hspace{0.05\textwidth}
    \begin{subfigure}[b]{0.35\textwidth}
    \centering
       \includegraphics[width=1.1\linewidth]{ 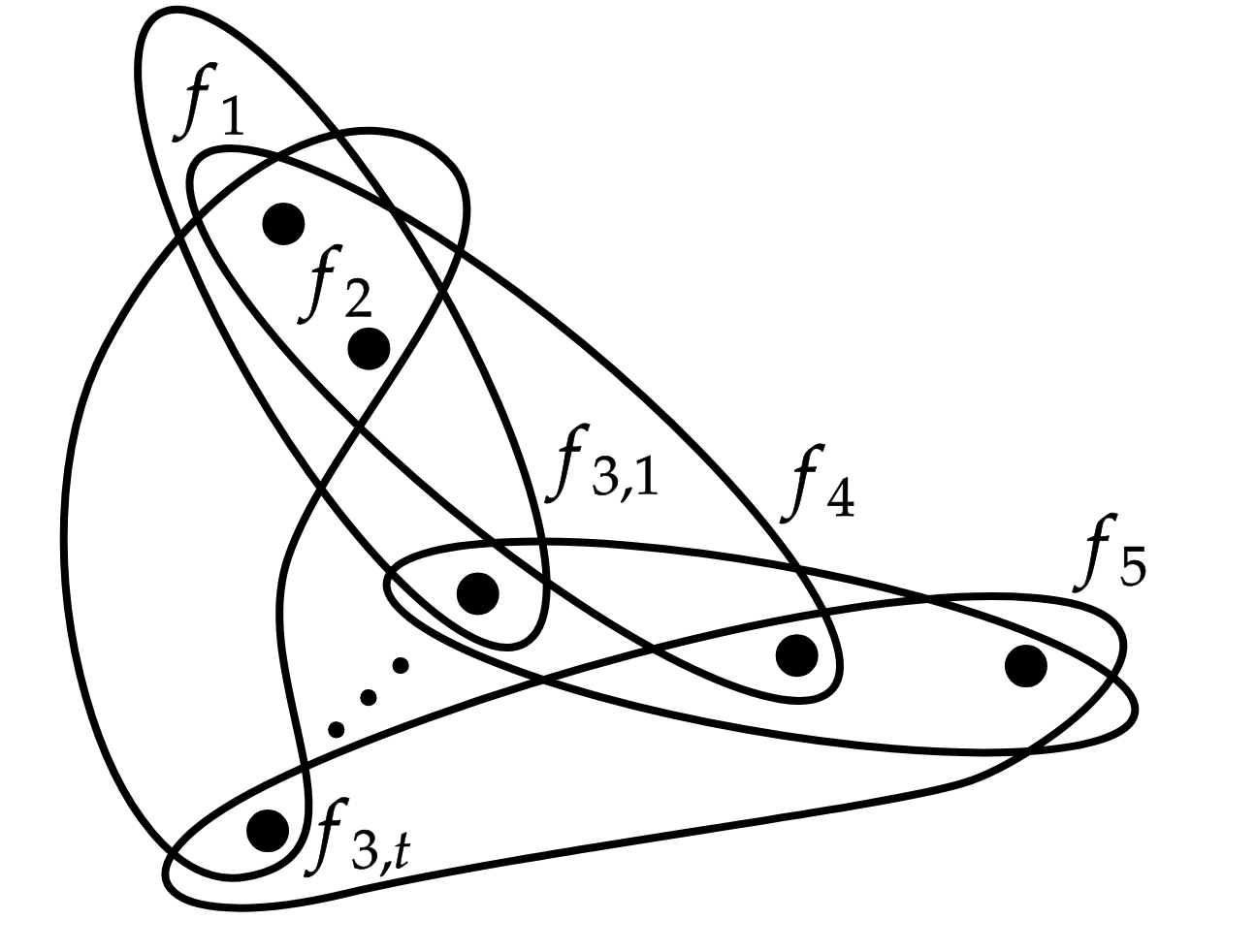}
    \caption{The graph $\cF_5(f_3;t)$.}
    \label{fig:F5(f3;t)}
    \end{subfigure}
    \caption{}
\end{figure}

Let $D(n,3,t,2)$ denote a maximum $n$-vertex $3$-uniform hypergraph such that every pair of vertices is contained in at most $t-1$ hyperedges.
Balogh, Clemen and Luo \cite{bcl} determined the extremal construction of $\cF_5(f_5;t)$, which is obtained by embedding $D(\left\lfloor n/3\right\rfloor,3,t,2)$ or $D(\left\lceil n/3\right\rceil,3,t,2)$ into every part of $T_3(n,3)$. 
We denote it as $T_3(n,3)^{t+}$.
This shows the hypergraph $\cF_5(f_5;t)$ has exponentially many non-isomorphic extremal constructions.

\begin{thm}[\cite{bcl}] \label{thm: F5,f1,t}
For sufficiently large $n$, we have $$\ex_3(n,\cF_5(f_5;t))=e(T_3(n,3)^{t+}).$$
\end{thm}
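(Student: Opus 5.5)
We prove both bounds. For the lower bound, we check that $T_3(n,3)^{t+}$ is $\cF_5(f_5;t)$-free. Its hyperedges are the crossing triples of $T_3(n,3)$ plus, inside each part, a $3$-graph in which every pair has codegree at most $t-1$. Suppose a copy of $\cF_5(f_5;t)$ is present, with blown-up vertices $f_5^1,\dots,f_5^t$.

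\begin{itemize}
\item The pair $f_3f_4$ lies in the $t$ hyperedges $f_3f_4f_5^j$. So $f_3,f_4$ cannot lie in a common part: such a pair lies in no crossing edge and in at most $t-1$ inner edges. Hence $f_3$ and $f_4$ are in different parts.
\item Consider the edges $f_1f_2f_3$ and $f_1f_2f_4$. If $f_1,f_2$ are in the same part, both edges must be inner, since a crossing edge meets each part exactly once. Then $f_3$ and $f_4$ lie in that same part, a contradiction.
\item So $f_1,f_2$ are in different parts, and both edges are crossing. Each of $f_3$ and $f_4$ must then lie in the third part, again a contradiction.
\end{itemize}

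For the upper bound we use the stability method. Let $\cH$ be an extremal $\cF_5(f_5;t)$-free graph on $n$ vertices; by the construction, $e(\cH)\ge n^3/27+\Omega(n^2)$. Since $\cF_5(f_5;t)$ is a blow-up of $\cF_5$, supersaturation and removal combined with the stability theorem for $\cF_5$ (Keevash--Mubayi) show that $\cH$ is $o(n^3)$-close to $T_3(n,3)$.

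\begin{itemize}
\item Take a max-cut partition $V_1\cup V_2\cup V_3$ maximizing the number of crossing edges.
\item By the standard vertex-deletion and extremality argument, minimum degree is at least $n^2/9-o(n^2)$, and every vertex has link close to complete bipartite between the other two parts. In particular, each part has size $n/3+o(n)$.
\item Bad edges are those meeting some part in exactly two vertices. Inner edges are those lying entirely inside a part.
\end{itemize}

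The core of the argument has two steps.
\begin{itemize}
\item \emph{Inner edges have bounded codegree.} Suppose $x,y\in V_1$ lie in $t$ inner edges $xyz_1,\dots,xyz_t$. Choose $a\in V_2$, $b\in V_3$ with $axb$ and $ayb$ both edges; this is possible since the links are almost complete bipartite. Then $f_1=a$, $f_2=b$, $f_3=x$, $f_4=y$ and $f_5^j=z_j$ give a copy of $\cF_5(f_5;t)$. So the inner edges inside each $V_i$ form a graph with all codegrees at most $t-1$, and their number is at most $e(D(|V_i|,3,t,2))$.
\item \emph{Bad edges cost more than they gain.} Take a bad edge $xyz$ with $x,y\in V_1$ and $z\in V_2$. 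Using common crossing neighbours, one builds a copy with $f_1,f_2$ chosen so that $f_1f_2x$ and $f_1f_2y$ are edges, $f_3=x$, $f_4=y$, and the $t$ copies of $f_5$ taken from $V_2$. This forces many crossing triples through $x$ or $y$ to be missing. A charging argument then shows each bad edge corresponds to $\Omega(n)$ missing crossing edges.
\end{itemize}

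Hence
\[
e(\cH)\le e(K(V_1,V_2,V_3))-(\text{missing})+(\text{bad})+\sum_i e(D(|V_i|,3,t,2)),
\]
and the extremality of $\cH$ forces bad edges to be absent. Optimizing the part sizes, using that $e(D(m,3,t,2))=\Theta(m^2)$ is a lower-order perturbation, gives a balanced partition and the bound $e(T_3(n,3)^{t+})$.

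The main obstacle is the charging step for bad edges. The difficulty is that $z$'s blow-up copies must be found in a single part while the bad edge structure may be sparse, so we need to argue via codegree of pairs $\{x,y\}$ across parts. First I would try counting, for each bad pair, the $t$-sets of common neighbours and applying the min-degree condition.
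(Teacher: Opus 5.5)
The paper gives no proof of this statement; it is quoted from Balogh--Clemen--Luo. The nearest in-paper argument is the proof for $\cF_5(f_3;t)$ in Section~\ref{sec: section 5}, which uses the stability-plus-charging template you describe. Your lower bound is correct. The upper bound has a genuine gap, and it sits in the step you call standard: that every vertex has link close to complete bipartite between the other two parts. This does not follow from minimum degree and stability, because for $\cF_5(f_5;t)$ exceptional vertices of large degree come within $O(n)$ of the extremal number.

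Here is an example. Take $T_3(n-t+1,3)$ and add $t-1$ vertices $z$, each forming an edge with every pair lying inside a part.
\begin{itemize}
\item Pairs inside a part have codegree exactly $t-1$. The only pairs of codegree at least $t$ are crossing pairs and pairs $\{z,y\}$.
\item For neither kind is there a pair $\{f_1,f_2\}$ forming edges with both vertices. For $\{z,y\}$, an edge $f_1f_2z$ puts $f_1,f_2$ in one part, and then $f_1f_2y$ is not an edge. So the graph is $\cF_5(f_5;t)$-free.
\item Each $z$ has degree about $n^2/6>n^2/9$. It survives any minimum-degree cleaning, and replacing it by a clone of a typical vertex loses edges.
\item For $t=2$, $n=3m$, and $m$ admitting a Steiner triple system, place $z$ in the part of size $m-1$. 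The graph has $m^3-m+\binom{m-1}{2}=e(T_3(n,3)^{2+})-(2m-1)$ edges.
\end{itemize}
Near such a vertex both of your core steps fail. Your codegree bound needs $x,y$ typical to find the common crossing pair $a,b$. Indeed, the inner edges through $z$ in its own part number $\binom{m-1}{2}$, about three times $e(D(m,3,2,2))$. Also, the bad edges at $z$ number $m^2-m$ against $m^2$ missing ones. So neither ``each bad edge costs $\Omega(n)$ missing edges'' nor ``bad edges are at most $c|\cM|$ with $c<1$'' can be proved in this generality. The exceptional vertices must be handled by an exact count, and that is where the real difficulty of the theorem lies.

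Separately, the embedding you propose for bad edges ($f_3=x$, $f_4=y$, $t$ copies of $f_5$ in $V_2$) needs $t$ edges on $\{x,y\}$. It only repeats the codegree step and says nothing about a single bad edge $xyz$. The embedding that uses one bad edge is $f_1=x$, $f_2=z$, $f_3=y$, $f_4=w\in V_3$ with $xzw\in\cH$, and $f_5^j\in V_2$ with $ywf_5^j\in\cH$. It shows that every $w\in V_3$ with $xzw\in\cH$ has at most $t-1$ common crossing neighbours with $y$ in $V_2\setminus\{z\}$. Hence if $y$ is typical, almost all crossing edges through $\{x,z\}$ are missing; symmetrically for $\{y,z\}$ if $x$ is typical.

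You can then charge the bad edges at $z$ to the missing edges through these dead pairs, as in the bound on $\cF_1$ via the sets $Y_b$ in Section~\ref{sec: section 5}. This gives ratio $O(\delta)$ when $z$ is typical, but a ratio approaching $1$ when $z$ is exceptional, as in the example above. To finish you need a finer global accounting. One option: every pair of typical vertices in one part has total codegree at most $t-1$. This budget is consumed three times by each inner edge but only once by each edge through an exceptional vertex. Traded against the crossing degree that exceptional vertices lose, the two sides balance at order $n^2$, so the outcome is decided only by linear-order terms.
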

Any blow-up of $
\cF_5$ has the same Tur\'an density as $\cF_5$, but the exact value is unknown. Following the research of \cite{bcl}, we initiate the study of what happens if we blow up other vertices of $\cF_5$.


First, we give a general result for all the blow-ups of $\cF_5$.
Let $\cF_5^S(m)$ denote the hypergraph obtained by adding one vertex $v$ to $T_3(3m,3)$ and one edge $E$ containing $v$ and two vertices in one part of the $T_3(3m,3)$. Not that $\cF_5$ is a subgraph of $\cF_5^S(2)$.
It is shown in \cite{bcl} that for all $m\geq 2$,  $\ex_3(n,\cF_5^S(m))=e(T_3(n,3))$.

For integers $t+s \geq 3$ and $m\geq 2s$,
let $\cF_5^S(m,t,K_{s,s})$ denote the hypergraph obtained in the following way. We take a $T_3(3m,3)$ and in one of its parts we take two disjoint $s$-sets $S_0$ and $S_1$. We take a set $T$ of $t$ additional vertices. We take the hyperedges of $T_3(3m,3)$, and every hyperedge that contains one vertex from $T$, one vertex from $S_0$, and one vertex from $S_1$~(see Figure \ref{fig: P3}).
In other words, the link graph of every vertex in $T$ is a $K_{s,s}$ with parts $S_0$ and $S_1$ (the \textit{link hypergraph} of the vertex $v$ of an $r$-graph $\cH$ is an $(r-1)$-graph, defined as $L(\cH(v))=\big\{e\backslash\{v\}\,:\,e\in E(\cH), v\in e\big\}$). 

\begin{figure}[t]
    \centering
    \begin{subfigure}[b]{0.35\textwidth}
        \centering
\includegraphics[width=1.0\linewidth]{ 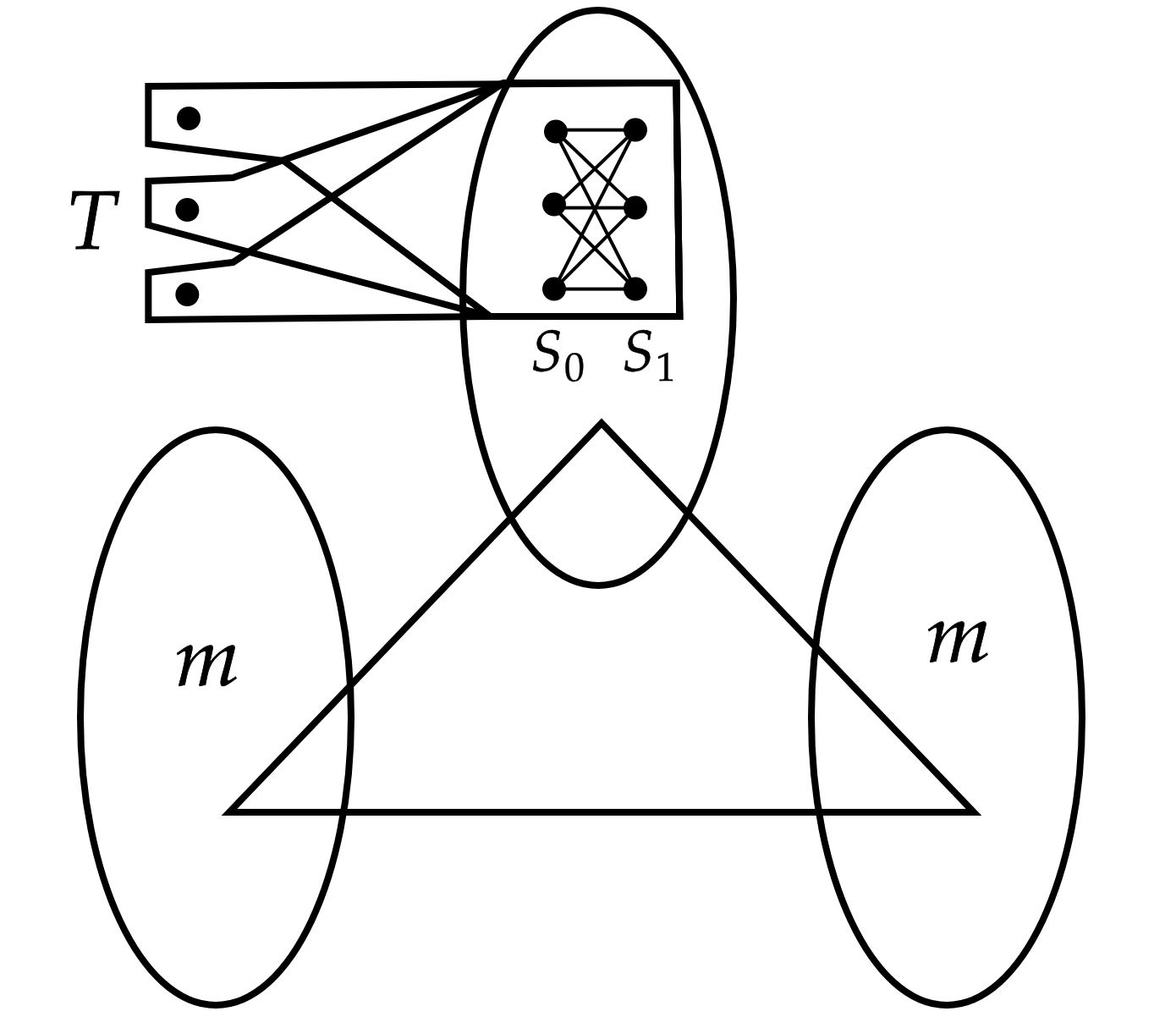}
        \caption{The hypergraph $\cF_5^S(m,t,K_{s,s})$}\label{fig: P3}
    \end{subfigure}
    \hspace{0.01\textwidth}
    \begin{subfigure}[b]{0.35\textwidth}
\includegraphics[width=1.0\textwidth]{ 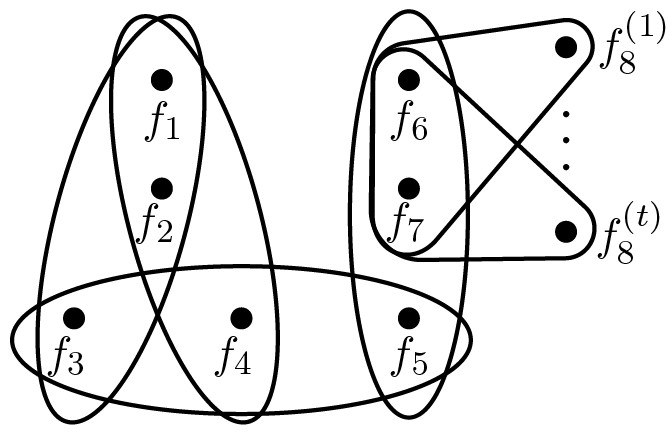}
        \caption{The hypergraph $\cF_{sim}(t)$}\label{fig: F5 sim}
    \end{subfigure}
    \caption{}
\end{figure}


Note that $\cF_5^S(m)=\cF_5^S(m,1,K_{1,1})$.
Also note that every  $t$-blow-up of $\cF_5$ (i.e. we blow up each vertex $t$ times) is a subgraph of $\cF_5^S(m,t,K_{s,s})$ for some $m,t$ and $s\geq t$. 
Indeed, we can embed the blow-ups of $ f_1$ and $ f_2$ into the two parts of $T_3(3m,3)$ that do not contain $S_0$ or $S_1$, and embed the blow-ups of $f_3$ into $S_0$, the blow-ups of $f_4$ into $S_1$, and the blow-ups of $f_5$ into $T$.

To state our general theorem, we need to introduce a certain ``bipartite'' hypergraph Tur\'an problem. We will consider $3$-uniform hypergraphs with $m+n$ vertices and a bipartition of the vertex set into $A,B$ with $|A|=m$ and $|B|=n$, whose hyperedges consist of one vertex from $A$ and two vertices from $B$.
We forbid copies of $K_{t,s,s}$ in which the $t$-vertex part is in $A$ and the rest of the vertices are in $B$.
Here $K_{t,s,s}$ is a complete $3$-uniform $3$-partite hypergraph where the parts have order $t,s$ and $s$, respectively. 
In other words, if we consider the link graphs of the vertices of $A$, no copy of $K_{s,s}$ inside $B$ is contained in $t$ many of those link graphs.
Let $\ex_3^{bip}(m,n,K_{t,s,s})$ denote the maximum number of hyperedges in a $3$-uniform hypergraph under the above restriction.
In the following theorem, we give a lower bound and an upper bound of the Tur\'an number of $\cF_5^S(m,t,K_{s,s})$.

\begin{thm}\label{thm: cF_5^S(m,B)} 
   When $n$ is large enough, for positive integers $m,t$ and $s$, with $m\geq 2s\geq 4$, we have
    $$\ex_3(n,\cF_5^S(m,t,K_{s,s}))\leq e(T_3(n,3))+6\cdot\ex^{bip}_3(n/3,n/3,K_{t,s,s})+3\cdot\ex_3(n/3,K_{t,s,s})+o(n^{3-\frac{1}{s^2}}),$$
    and 
    $$\ex_3(n,\cF_5^S(m,t,K_{s,s}))\geq e(T_3(n,3))+3\cdot\ex_3(n/3,K_{t,s,s}).$$
    Moreover, when $t=1$,
    $$\ex_3(n,\cF_5^S(m,1,K_{s,s}))\leq e(T_3(n,3))+6\cdot\ex^{bip}_3(n/3,n/3,K_{1,s,s})+3\cdot\ex_3(n/3,K_{1,s,s})+o(n^{3-\frac{1}{s}}).$$
\end{thm}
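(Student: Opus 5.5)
The lower bound comes from an explicit construction, and the upper bound from a stability argument.

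\textbf{Lower bound.} Take $T_3(n,3)$ with parts $V_1,V_2,V_3$. Inside each part $V_i$, place an extremal $K_{t,s,s}$-free $3$-graph on $|V_i|\approx n/3$ vertices. This has $e(T_3(n,3))+3\cdot\ex_3(n/3,K_{t,s,s})$ hyperedges, up to the usual rounding absorbed in the notation. To check that it is $\cF_5^S(m,t,K_{s,s})$-free, suppose a copy exists.

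The blown-up $T_3(3m,3)$ inside the copy contains $K_{2,2,2}$, and $K_{2,2,2}$ has $\binom{6}{3}$ structure that forces any embedding to use all three parts, or to sit inside a single part. Since $m\ge 2s\ge4$ and each part contains only sparse ($o(n^3)$, degenerate) hypergraphs, I would argue as follows:

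\begin{itemize}
\item Vertices of $T_3(3m,3)$ lying in a single part, together with the crossing edges, cannot all be inside-part edges. A counting/colouring argument gives that the three classes of $T_3(3m,3)$ are mapped injectively onto $V_1,V_2,V_3$.
\item Then $S_0\cup S_1$ lies in one part, say $V_1$.
\item Every edge $T$–$S_0$–$S_1$ has two vertices in $V_1$, so it can only be an edge inside $V_1$. This forces $T\subset V_1$, and we obtain a $K_{t,s,s}$ inside $V_1$, a contradiction.
\end{itemize}

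The delicate point is showing that the $T_3(3m,3)$ part embeds "properly". I would handle it by noting that a hyperedge of the construction with two vertices in one part lies entirely in that part. Consequently, vertices of the copy of $T_3(3m,3)$ from different classes can be shown to lie in different parts, unless the whole copy lies in one part. That case is impossible, because $K_{t,s,s}$-freeness with $m\ge 2s$ excludes it: $T_3(3m,3)$ contains $K_{t,s,s}$ once $m\ge\max(s,t)$. If $t$ exceeds this, I would adjust the argument accordingly.

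\textbf{Upper bound.} Let $\cH$ be extremal. Since $\pi(\cF_5^S)=2/9$ (it is contained in a blow-up of $\cF_5$), stability for $\cF_5$ (Keevash–Mubayi, as used in \cite{bcl}) gives a partition $V_1,V_2,V_3$ with $o(n^3)$ edges wrong. Next I would clean the hypergraph:

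\begin{itemize}
\item By the standard min-degree deletion, assume $\delta(\cH)\ge(2/9-o(1))\binom{n}{2}\cdot\frac{3}{n}\cdot\ldots$, i.e. near-Turán minimum degree.
\item Move each vertex to the part maximizing its crossing link, so that every vertex has link close to complete bipartite between the other two parts.
\end{itemize}

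Now classify the bad edges: those with all three vertices in one part (type $3$), and those with two vertices in $V_i$ and one in $V_j$, $j\ne i$ (type $2$; there are six ordered pairs).

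\begin{itemize}
\item \emph{Type 3.} Within $V_i$, the type-3 edges must be $K_{t,s,s}$-free. Otherwise the $K_{t,s,s}$ with $T,S_0,S_1\subset V_i$, together with the near-complete crossing structure, produces $\cF_5^S(m,t,K_{s,s})$. Here I pick a common $T_3(3m,3)$ containing $S_0\cup S_1$ in $V_i$ by the min-degree/supersaturation argument. This gives at most $3\ex_3(n/3,K_{t,s,s})$ such edges.
\item \emph{Type 2.} For the pair $(V_j,V_i)$, the edges with one vertex in $V_j$ and two in $V_i$ form a bipartite system. A $K_{t,s,s}$ with $T\subset V_j$ and $S_0,S_1\subset V_i$ again extends to a copy, since $T$ need not lie in any part of the $T_3(3m,3)$. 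This gives at most $6\ex^{bip}_3(n/3,n/3,K_{t,s,s})$ edges.
\end{itemize}

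The main obstacle is that the parts have sizes $n/3\pm o(n)$, and that the missing crossing edges must compensate for this. I would bound $e(\cH)\le e(T_3(n,3))-(\text{missing crossing})+(\text{bad})$. Beyond that, the embedding requires many common neighbours, and vertices that are "low" in some part must be handled separately.

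The error terms come from the Kővári–Sós–Turán-type bound: $\ex(K_{t,s,s})=O(n^{3-1/s^2})$, and for $t=1$ the bound is $O(n^{3-1/s})$. Vertices or pairs where the extension fails are those incident to many missing crossing edges. Their number times their contribution is $o(n^{3-1/s^2})$, respectively $o(n^{3-1/s})$, and this yields the stated error terms.
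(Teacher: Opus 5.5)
\paragraph{Lower bound.}
Your lower bound is correct and is essentially the paper's argument. An edge of the construction with two vertices in one part lies inside that part, so each class of the copy of $T_3(3m,3)$ lands in a single part, and hence so does $S_0\cup S_1$. The $T$--$S_0$--$S_1$ edges then force a $K_{t,s,s}$ inside that part. You need not exclude the case where all of $T_3(3m,3)$ sits in one part; the same argument covers it, and your appeal to $m\ge\max(s,t)$ fails for $t>m$ anyway.

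\paragraph{Where the upper bound breaks.}
The upper bound has a genuine gap at the cleaning step. Minimum-degree deletion followed by moving each vertex to the part maximizing its crossing link does \emph{not} make every link nearly complete bipartite. A vertex of $V_1$ whose link is essentially all pairs inside $V_2$ and inside $V_3$ has degree about $n^2/9$ but almost no crossing edges in any part, and nothing in your argument excludes such vertices. $\cF$-freeness alone does not exclude them either: for $t\ge 2$ one can add $t-1$ vertices joined to all pairs inside the parts of $T_3(n-t+1,3)$ without creating $\cF_5^S(m,t,K_{s,s})$.

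Your extension step needs $S_0\cup S_1$ and the rest of its class to have almost complete crossing links. So $K_{t,s,s}$-freeness of the three in-part systems and the six bipartite systems is only available on the typical vertices $V\setminus X$. The edges meeting the exceptional set $X$ must be bounded separately. Your closing claim that their ``number times their contribution is $o(n^{3-1/s^2})$'' is unjustified: stability gives only $|X|=O(\epsilon n/\delta)$, so the trivial count is $O(\epsilon n^3)$, far above $n^{3-1/s^2}$.

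\paragraph{What is missing.}
The missing ingredient is the content of Lemma~\ref{lem: const bound XL}. Exceptional vertices of degree below $|B||C|-\delta n^2$ are paid for by the Tur\'an deficit, since $e(T_3(n-|X|,3))+|X|(\tfrac19+\epsilon-\delta)n^2\le e(T_3(n,3))$. Those of larger degree must be shown to be $O(1)$ in number, so that they contribute only $O(n^2)$.

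Such a vertex has $\Omega(\delta n^2)$ non-crossing edges of one of two kinds.
\begin{itemize}
\item Edges through two typical vertices of one part. If many vertices have many such edges, Lemma~\ref{lem: find Km,m,m} yields $t$ of them whose links share a $K_{s,s}$ on typical vertices, and this extends to $\cF_5^S(m,t,K_{s,s})$.
\item Edges through one typical vertex of its own part and one of another part. Here one first finds $t$ typical vertices of the other part whose links all contain a large complete bipartite graph between typical vertices of the own part and the exceptional vertices. One then uses the maximality of the number of good edges to see that these exceptional vertices still have $\Omega(n^2)$ crossing edges. You impose this maximality but never exploit it. It lets the exceptional vertices sit, together with typical vertices of their own part, in the class of $T_3(3m,3)$ containing $S_0\cup S_1$.
\end{itemize}
Without an argument of this kind the upper bound is not established.

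The unequal part sizes you flag are comparatively routine; the paper handles them by monotonicity and Lemma~\ref{lem: inequality of ex3 bip}.
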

Here, note that the terms $\ex_3^{bip}(n/3,n/3,K_{t,s,s})$ and $\ex_3(n/3,K_{t,s,s})$ are both $O(n^{3-\frac{1}{s^2}})$ according to Theorems \ref{thm: ex3 of Ktss} and \ref{thm: ex bip Ktss} which we prove in Section 3. The order of magnitude of them is not determined for $t\geq 2$, but when $t=1$, we have a sharper bound $O(n^{3-\frac{1}{s}})$. 
Thus, for the upper bound, the first term is the leading term, the second and third terms are the second-leading terms, and then the term $o(n^{3-\frac{1}{s^2}})$ ($o(n^{3-\frac{1}{s}})$ when $t=1$) may be negligible.

We also provide some exact results for certain subgraphs of the blow-up of $\cF_5$. In \cite{bcl}, Balogh, Clemen and Luo asked the following: among all subgraphs of a blow-up of $\cF_5$, for which hypergraph $F$, $\ex_3(n,F)=|T_3(n,3)|$ holds for sufficiently large $n$? According to the graph case, one natural guess is that $\ex_3(n,\cF)=|T_3(n,3)|$ if and only if $\cF$ is a subgraph of $T_3(m,3)+E$~(i.e. adding another hyperedge to $T_3(m,3)$) for some integer $m$. This condition is clearly necessary, and Simonovits \cite{S} showed that in the graph case, the analogous condition is also sufficient. However,  Balogh, Clemen, and Luo \cite{bcl} gave a complicated counterexample, showing that this condition is not sufficient.

Here, we present a simpler counterexample. Let $\cF_{sim}(t)$ denote the $3$-uniform hypergraph obtained by blowing up the vertex $f_8$ $t$ times in the hypergraph $\{f_1f_2f_3,f_1f_2f_4,f_3f_4f_5,f_5f_6f_7,\\f_6f_7f_8\}$ (see Figure \ref{fig: F5 sim}). 
Note that $\cF_{sim}(t)$ is contained in a blow-up of $\cF_5$,
 and it is easy to check for sufficiently large  $n$, 
 adding any other hyperedges to $T_3(n,3)$ will create a copy of $\cF_{sim}(t)$. But we show that $\cF_{sim}(t)$ has a very different extremal structure.

\begin{figure}[t]
    \centering
\centering
\includegraphics[width=0.35\textwidth]{ 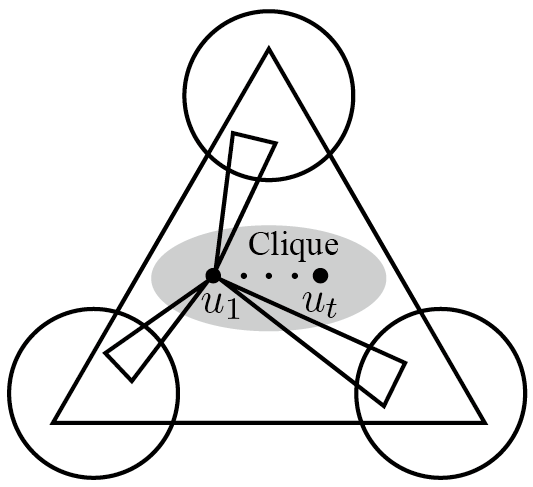}
		\caption{The unique extremal graph of $\cF$}
        \label{fig: unique extremal of F}

\end{figure}

We provide the exact Tur\'an number of $\cF_{sim}(t)$ in the following theorem.
\begin{thm}\label{thm: F5 v8 t}
    When $n$ is sufficiently large, we have
    \[
        \ex_3(n, \cF_{sim}(t)) = \max_{a+b+c =n-t} \left\{ abc + t \left( \binom{a}{2} + \binom{b}{2} + \binom{c}{2} \right) \right\} + \binom{t}{3},
    \]
    where $a,b,c$ are positive integers.
    The maximum is achieved when $a,b,c$ differ by at most $1$.
    Equality holds if and only if the extremal hypergraph is obtained from the complete tripartite hypergraph on $n-t$ vertices $T_3(n-t,3)$ and adding $t$ vertices forming a clique, and each vertex of this clique with each pair that lies in one part of the $T_3(n-t,3)$ forms a hyperedge~(see Figure~\ref{fig: unique extremal of F}).
\end{thm}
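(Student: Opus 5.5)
We prove Theorem~\ref{thm: F5 v8 t} with the standard stability-plus-cleaning scheme.

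\textbf{Construction.} Take $T_3(n-t,3)$ with parts $V_1,V_2,V_3$, a $t$-clique $K$, and every triple consisting of a vertex of $K$ and a pair inside one $V_i$. We must check it is $\cF_{sim}(t)$-free. In $\cF_{sim}(t)$, every vertex $f_8^j$ has link containing the pair $f_6f_7$. Hence the pair $f_6f_7$ has codegree at least $t+1$, counting $f_5$ and the $t$ blown-up copies of $f_8$.

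Pairs inside $K$ have codegree only $t-2+\ldots$, which is too small to be the relevant pair, while pairs $\{k,v\}$ with $k\in K$, $v\in V_i$ lie in edges with $V_i$. So we do a case analysis on where $f_6,f_7$ land:
\begin{itemize}
\item $f_6f_7$ crossing between two parts: then $f_5,f_8^j$ all lie in the third part, so they are ordinary vertices.
\item $f_6f_7$ inside $V_i\cup K$: then the common neighbours lie in $K$, which has at most $t$ vertices, one short of the required $t+1$.
\item Other placements: handled the same way.
\end{itemize}
One then checks that the $\cF_5$ part $\{f_1f_2f_3,f_1f_2f_4,f_3f_4f_5\}$ cannot embed with $f_5$ in a part or in $K$. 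This uses that $T_3$ plus the $K$-edges contains no $\cF_5$ through ordinary vertices, since $\cF_5$ needs an edge inside a part. The only edges not transversal to $V_1,V_2,V_3$ use $K$, and we track this carefully. Optimising $a,b,c$ is a routine convexity computation.

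\textbf{Upper bound.} Let $\cH$ be extremal. Since $\cF_{sim}(t)$ is contained in a blow-up of $\cF_5$, we have $\pi=2/9$. By the Keevash–Mubayi type stability for $\cF_5$ (blow-ups inherit stability via removal), $\cH$ is $o(n^3)$-close to $T_3(n,3)$. After the usual cleaning (deleting vertices of low degree, which is affordable because $e(\cH)\ge |T_3(n,3)|+\Theta(tn^2)$), we get a max-cut partition $V_1\cup V_2\cup V_3$ in which every vertex has linear-size links into the crossing pairs.

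Call an edge \emph{bad} if it is not transversal. The key claim is that the set $B$ of vertices lying in $\Omega(n)$ bad edges has $|B|\le t$, and that all but $O(1)$ bad edges contain a vertex of $B$. For the first part: if $t+1$ vertices each had many bad edges inside parts, then a common bad pair $f_6f_7$ together with a typical vertex $f_5$ would build $\cF_5$ via $f_3f_4f_5$ with $f_3,f_4$ in another part. The reason is that a bad edge $f_3f_4f_5$ together with the transversal structure gives $f_1f_2f_3$ and $f_1f_2f_4$ whenever $f_3,f_4$ are in the same part.

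\textbf{Main obstacle.} The hardest step is the counting that shows vertices outside $B$ carry essentially no bad edges. This requires embedding $\cF_5$ using a bad edge $e=f_3f_4f_5$ that has two vertices in one part, and then extending to $f_5f_6f_7$ and $f_6f_7f_8^{1..t}$ using the dense crossing structure. The $t$ copies of $f_8$ need only a crossing pair $f_6f_7$ of codegree at least $t+1$, which is automatic for typical vertices. So any bad edge whose vertices are all typical yields $\cF_{sim}(t)$, and the bad edges are forced onto atypical vertices.

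Once bad edges are confined to $B$ with $|B|\le t$, each $v\in B$ has degree at most $\binom{a}{2}+\binom{b}{2}+\binom{c}{2}+O(n)$. Comparing with the construction, the maximum forces $|B|=t$, each $v\in B$ having exactly the link ``all pairs inside parts'', and $B$ a clique, which gives the extra $\binom{t}{3}$. The optimisation over part sizes then yields equality and uniqueness.
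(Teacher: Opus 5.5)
Your outline follows the right general scheme: stability, then no non-transversal edge among typical vertices, then atypical vertices absorb the extra edges. However, the step that produces the coefficient $t$ fails as written. You assert that if $t+1$ vertices each lie in many bad edges, then they share a bad pair. Pigeonhole gives this only among \emph{some} $t+1$ vertices of $B$, once $|B|$ exceeds a constant depending on $\delta$ and $t$. So at best you get $|B|=O(1)$.

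The roles are also misassigned. An edge $f_5f_6f_7$ with $f_5$ typical and $f_6f_7$ a typical bad pair is a bad edge among typical vertices, which you have already excluded. So $f_5$ must be one of the apexes.

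What the forbidden configuration actually gives is a codegree bound. Suppose $x$ lies in edges $xyz$ and $xy'z'$, where $yz$ and $y'z'$ are disjoint typical bad pairs, and $yz$ is in the links of $t$ further vertices $x_1,\dots,x_t$. Taking $bc$ a crossing pair common to $y'$ and $z'$, the edges $bcy',bcz',y'z'x,xyz,yzx_1,\dots,yzx_t$ form a copy of $\cF_{sim}(t)$. Hence every typical bad pair in the link of a vertex with many typical bad edges has codegree at most $t$; it is exactly $t$ in the construction.

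This does \emph{not} bound $|B|$ structurally. Take $2t$ atypical vertices, each taking half of the bad pairs so that every bad pair is covered exactly $t$ times, together with a complete tripartite hypergraph on the rest. This hypergraph is $\cF_{sim}(t)$-free.

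The equality $|B|=t$ comes only from counting. The bad edges number at most $t\bigl(\binom a2+\binom b2+\binom c2\bigr)+o(n^2)$, while each atypical vertex costs about $n^2/9$ transversal edges. So $e(\cH)\lesssim |T_3(n,3)|+\min(t,|B|)\frac{n^2}{6}-|B|\frac{n^2}{9}$, which reaches the construction only when $|B|=t$. This is the role of the paper's Type~1/2/3 split of $X$ (Claims~\ref{claim: X1} and~\ref{claim: X2}). Type~1 vertices, whose bad pair is shared with $t$ others, have degree $O(\delta^{3/2}n^2)$. Type~2 vertices, which have no typical bad pair, have degree at most $\frac19n^2+3\epsilon n^2-2\delta n^2$ by maximality of good edges. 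Your sentence ``comparing with the construction forces $|B|=t$'' only yields $|B|\ge t$.

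Several supporting steps are also missing or false.

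\begin{itemize}
\item Your bound $d(v)\le\binom a2+\binom b2+\binom c2+O(n)$ for $v\in B$ needs the fact that a vertex lying in an edge with a typical bad pair has no proper crossing pair in its link away from that pair. Such a vertex is then the $f_5$ of an $\cF_5$, and the crossing pair serves as $f_6f_7$ (Claim~\ref{claim: X property}). You apply this mechanism only to typical $f_5$. Also, the error term is $O(|X|n)$, not $O(n)$, unless you have already bounded $|X|$.
\item The cleaning output ``every vertex has linear-size links into the crossing pairs'' is false in the extremal hypergraph itself: its clique vertices have no crossing pair in their links.
\item The threshold ``$\Omega(n)$ bad edges'' defining $B$ must be $\Omega(n^2)$, since every ordinary vertex of the construction lies in $t(|V_i|-1)$ bad edges.
\item Because the claimed value is exact down to $\binom t3$, all $O(n)$-size contributions must be ruled out rather than absorbed. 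Your claim ``all but $O(1)$ bad edges meet $B$'' is asserted without proof. The paper separately excludes three things. First, edges $vx_1x_2$ with $v$ typical and $x_1,x_2$ atypical: then no typical bad pair $yz$ lies in both links, since $\{yzx_1,yzx_2,x_1x_2v\}$ would be an $\cF_5$ with typical apex $v$. Second, edges from an atypical vertex to a proper crossing pair. Third, improper crossing pairs.
\item In the construction check, the pair $\{k,v\}$ with $k\in K$, $v\in V_i$ has codegree $|V_i|-1$, so ``handled the same way'' does not apply. Moreover, $\cF_5$ \emph{does} embed with $f_5\in K$. The correct argument: every co-neighbourhood lies inside one of the classes $V_1,V_2,V_3,K$, so no $\cF_5$ has an ordinary apex. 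If instead $f_5\in K$, then $f_6f_7$ lies inside a part or inside $K$. Its co-neighbourhood is then contained in $K$, which cannot host the $t+1$ vertices $f_5,f_8^1,\dots,f_8^t$.
\end{itemize}
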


Next, we focus on the exact Tur\'an number of hypergraphs obtained by blowing up one vertex of $\cF_5$. 

Notice that $\cF_5(f_1;t)$ and $\cF_5(f_2;t)$ are both  subgraphs of $\cF_5^S(t)$, whose Tur\'an number has been determined in \cite{bcl}.
The value of $\cF_5(f_5;t)$ is 
determined in 
Theorem
\ref{thm: F5,f1,t}.
Since $f_3$ and $f_4$ are equivalent in $\cF_5$, $\cF_5(f_3;t)$ is isomorphic to $\cF_5(f_4;t)$.
We determine the exact value of Tur\'an number of $\cF_5(f_3,t)$. Recall that $T_3(n,3)^{t+}$ is obtained by adding any $D(\left\lfloor n/3\right\rfloor,3,t,2)$ or $D(\left\lceil n/3\right\rceil,3,t,2)$ to every part of $T_3(n,3)$. 
We have the following result.
\begin{thm}\label{thm: F5 v3 t}
    When $n$ is sufficiently large, $\ex_3(n,\cF_5(f_3;t))=e(T_3(n,3)^{t+})$.
\end{thm}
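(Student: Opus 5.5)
The lower bound is a direct check, and the upper bound follows the stability-plus-cleaning scheme that Balogh, Clemen and Luo used for Theorem~\ref{thm: F5,f1,t}.

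\textbf{Lower bound.} I would show that $T_3(n,3)^{t+}$ contains no copy of $\cF_5(f_3;t)$. Write the copy as follows: the pair $f_1f_2$ forms hyperedges with $f_4$ and with every vertex of the blown-up set $X=\{x_1,\dots,x_t\}$, and the pair $f_4f_5$ forms hyperedges with every $x_i$.

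\begin{itemize}
\item Suppose the $t$ hyperedges $x_if_4f_5$ are all crossing. Then $f_4,f_5$ lie in two different parts, and all of $X$ lies in the third part, say $V_1$.
\item Now consider $f_1f_2$, which has the $t+1\ge 2$ common neighbours $X\cup\{f_4\}$.
  \begin{itemize}
  \item If $f_1f_2$ is a crossing pair, its crossing common neighbours all lie in one part. Hence it needs at least one non-crossing hyperedge, which is impossible for a crossing pair.
  \item If $f_1,f_2\in V_j$, the common neighbours in $V_j$ number at most $t-1$, by the definition of $D(\cdot,3,t,2)$. The remaining common neighbours form crossing hyperedges and so lie outside $V_j$. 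But $X\subset V_1$ and $f_4\notin V_1$, so both $j=1$ and $j\neq 1$ lead to a contradiction.
  \end{itemize}
\item Suppose instead some hyperedge $x_if_4f_5$ lies inside a part. This case is handled similarly, since the pair $f_4f_5$ then has at most $t-1$ common neighbours inside its part.
\end{itemize}

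\textbf{Upper bound.} Let $\cH$ be an $n$-vertex $\cF_5(f_3;t)$-free hypergraph with $e(\cH)\ge e(T_3(n,3)^{t+})$.

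\begin{enumerate}
\item \emph{Stability.} Since $\cF_5(f_3;t)$ is a blow-up of $\cF_5$, the hypergraph removal lemma combined with the stability for $\cF_5$ (Keevash--Mubayi, Frankl--F\"uredi) gives that $\cH$ is $o(n^3)$-close to $T_3(n,3)$.
\item \emph{Minimum degree.} By the usual vertex-deletion argument, I may assume $\delta(\cH)\ge \delta(T_3(n,3)^{t+})$. If a vertex has smaller degree, delete it and use induction on $n$, since the extremal number increases by at least the minimum degree at each step.
\item \emph{Partition and normality.} Take a partition $V_1\cup V_2\cup V_3$ maximising the number of crossing hyperedges. The minimum degree and the maximality give that every vertex has link almost complete bipartite between the other two parts. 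In particular, every crossing pair $uv$ has $(1-o(1))n/3$ common neighbours $w$ forming crossing hyperedges $uvw$.
\end{enumerate}

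\textbf{Classifying the non-crossing hyperedges.} There are two types.

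\begin{itemize}
\item \emph{Type 3-0 (inside a part).} Suppose $u,v\in V_1$ have $t$ common neighbours $w_1,\dots,w_t\in V_1$. Take $f_4=u$, $f_5=v$, $X=\{w_i\}$. Then choose $f_1\in V_2$ and $f_2\in V_3$ from the common crossing neighbourhood of $u,w_1,\dots,w_t$, which is nonempty by normality. This gives a copy of $\cF_5(f_3;t)$. Hence $\cH[V_i]$ has every pair in at most $t-1$ hyperedges, so $e(\cH[V_i])\le e(D(|V_i|,3,t,2))$.
\item \emph{Type 2-1.} Consider $abc$ with $a,b\in V_1$ and $c\in V_2$. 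Set $f_4=b$, $f_5=c$. Then $f_1\in V_2$, $f_2\in V_3$ extend $a$ and $b$ by crossing hyperedges. So the pair $bc$ lies in at most $t-1$ hyperedges $xbc$ with $x\in V_1$ whose pair $xb$ has a common crossing $V_2\times V_3$ neighbour with $b$. By normality that common neighbour exists for all $x$ except a negligible set of bad vertices.
\end{itemize}

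\textbf{Main obstacle: the exact counting.} The remaining task is to show that each type-2-1 hyperedge costs at least one missing crossing hyperedge, and that the part sizes are balanced.

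\begin{enumerate}
\item I would first use the maximality of the partition and the minimum degree to show there are no bad vertices at all. A vertex with many 2-1 hyperedges would have degree exceeding $\delta$ only at the cost of many missing crossing hyperedges.
\item Then I would bound $\#(\text{2-1 hyperedges}) \le \#(\text{missing crossing hyperedges})$ by a local charging argument. Each 2-1 hyperedge $abc$ forbids, via the $\cF_5$ pattern with $f_3=a$, $f_4=b$, $f_5=c$ and a suitable $f_1f_2$, some specific crossing hyperedge $f_1f_2a$ or $f_1f_2b$. I would assign each 2-1 hyperedge to such a missing hyperedge so that the assignment is at most $(t-1)$-to-one weighted against the $\Theta(n)$ choices. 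Getting this charging exact, rather than merely up to lower-order terms, is where I expect the main difficulty.
\item Once it is done, $e(\cH)\le e(K(V_1,V_2,V_3))+\sum_i e(D(|V_i|,3,t,2))$.
\item Finally, the quadratic-order perturbation shows this is maximised by the balanced partition. This yields $e(\cH)\le e(T_3(n,3)^{t+})$.
\end{enumerate}
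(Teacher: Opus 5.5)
There is a genuine gap, and it is the step you flag as the main obstacle. You never bound the 2-1 hyperedges by a fraction of $|\cM|$ strictly less than $1$, and the charging you propose cannot do this for $t\ge 2$.

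With a 2-1 hyperedge $abc$ in the role $f_3f_4f_5$, a copy of $\cF_5(f_3;t)$ needs $t$ hyperedges through the pair $bc$, and you have just shown there are at most $t-1$. So a single 2-1 hyperedge in that role forbids no crossing hyperedge at all; only for $t=1$ does this pattern force missing hyperedges. All this pattern yields is that each crossing pair lies in at most $t-1$ such hyperedges. That is a bound of order $tn^2$, the same order as $\sum_i e(D(|V_i|,3,t,2))$, and it involves no missing hyperedges.

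The ``in particular'' in your normality step is also unjustified. Near-complete vertex links say nothing about individual crossing pairs: a vertex $u$ can miss every link edge at some fixed $b$ at a cost of only $O(n)$ missing hyperedges, leaving $ub$ with no crossing common neighbours. Moreover, if that normality held it would exclude 2-1 hyperedges outright, by the embedding below. So that step silently assumes the crux.

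The missing idea, which is what the paper uses, is to put the 2-1 hyperedge in the role $f_1f_2f_4$. Write $A,B,C$ for your $V_1,V_2,V_3$ and $A'$ for the atypical vertices of $A$. For $a_1a_2b$ with $a_1,a_2\in A\setminus A'$ and $b\in B$, set $\{f_1,f_2\}=\{a_2,b\}$ and $f_4=a_1$. Let $f_5=b_1\in B$ be a vertex such that $a_1b_1c\in E(\cG)$ for at least $|C|-O(\delta n)$ vertices $c\in C$; it exists because $a_1$ is typical. If $a_2b$ had $t$ crossing neighbours among those $c$, they would serve as the copies of $f_3$. Hence $d_{\cM}(\{a_2,b\})\ge |C|-O(\delta n)$, and likewise for $a_1b$.

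So every vertex of the link $Y_b$ of $b$ in these hyperedges contributes about $n/3$ missing hyperedges at $b$. Your own sunflower bound caps the number of such 2-1 hyperedges at $b$ by $(t-1)|Y_b|/2$; the paper even gets by with the trivial $|Y_b|\ge\sqrt{2e_b}$, where $e_b$ is that number. Summing over $b$ bounds these hyperedges by $O(t/n)\,|\cM|$. There is ample slack, so no exact charging is needed. Combined with the sunflower bound for 3-0 hyperedges, this gives
$e(\cG)\le e(\cK)-\tfrac12|\cM|+\sum_{V\in\{A,B,C\}} e(D(|V|,3,t,2))$.

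The same embedding, with a proper pair $a_0b_0$ as $f_1f_2$ and an atypical vertex $u$ as $f_4$, controls the 2-1 hyperedges at atypical vertices. You would need it anyway to make your ``no bad vertices'' step rigorous. The paper in fact skips the minimum-degree reduction and charges the extra hyperedges at each $u\in A'$ against $d_{\cM}(u)\ge 2\delta n^2$.

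Your lower-bound argument is fine in outcome, but it can be stated more directly: a pair inside a part lies in no crossing hyperedge, so both $f_1f_2$ and $f_4f_5$ must be crossing pairs.
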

Notice that the above theorem shows that the hypergraph $\cF_5(f_3;t)$ also has exponentially many non-isomorphic extremal constructions.

Very recently, Hou et al. \cite{Hou} investigated Tur\'{a}n problems for vertex-disjoint unions of a class of hypergraphs in the degenerate case and obtained near-optimal upper bounds in several cases. In fact, Tur\'{a}n problems for hypergraph forests can all be regarded as Tur\'{a}n problems for disjoint unions of hypergraph trees. In addition, a number of results have already been established on Tur\'{a}n problems for vertex-disjoint unions of expansions. Gu, Li and Shi \cite{GuLS} determined the Tur\'{a}n numbers for the vertex-disjoint union of expansions of cycles in $r$-uniform hypergraphs for $r\geq5$ and sufficiently large $n$.
Gerbner \cite{Ge1} determined the Tur\'{a}n number for the expansion of a graph $F$, where $F$ is the vertex-disjoint union of $k$ components with chromatic number $t+1$, each containing a color-critical edge, and any number of components with chromatic number at most $t$. Here, we consider the vertex-disjoint unions of $\cF_5^S(m)$. 

Let $(t+1)\cdot \cF_5^S(m)$ denote $t+1$ vertex-disjoint copies of $\cF_5^S(m)$.
Here, we give the Tur\'an number of $(t+1) \cdot \cF_5^S(m)$.
\begin{thm}\label{thm: t cF_5^S(m)}
    For every $m\geq 2$, when $n$ is sufficiently large, the following hypergraph $\cG(t,m,n)$ is the extremal hypergraph of $(t+1)\cdot \cF_5^S(m)$ with maximum number of hyperedges.
    
    Fix a set $T$ with $t$ vertices, and a hypergraph $T_3(n-t,3)$ disjoint with $T$.
Let $\cG(t,m,n)$ denote the $3$-uniform $n$-vertex hypergraph obtained by adding all the hyperedges intersecting $T$ to the union of $T$ and $T_3(n-t,3)$. 
\end{thm}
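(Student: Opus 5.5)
We first check that $\cG(t,m,n)$ is $(t+1)\cdot\cF_5^S(m)$-free, and then prove a matching upper bound by induction on $t$. The base case $t=0$ is the result of \cite{bcl} that $\ex_3(n,\cF_5^S(m))=e(T_3(n,3))$.

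\textbf{Freeness of $\cG(t,m,n)$.} The hypergraph $\cG(t,m,n)-T$ is $T_3(n-t,3)$, and $T_3(n-t,3)$ contains no copy of $\cF_5^S(m)$. Hence every copy of $\cF_5^S(m)$ in $\cG(t,m,n)$ meets $T$. Since $|T|=t$, there cannot be $t+1$ vertex-disjoint copies.

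\textbf{Upper bound, setup.} Let $\cH$ be an $n$-vertex $(t+1)\cdot\cF_5^S(m)$-free hypergraph with $e(\cH)\ge e(\cG(t,m,n))$. Note that
\[
e(\cG(t,m,n))-e(\cG(t-1,m,n-1))=\binom{n-1}{2}+O(n).
\]
Since $\cH$ contains $t$ disjoint copies of $\cF_5^S(m)$ but not $t+1$, a standard argument gives a set $W$ of at most $(t+1)|V(\cF_5^S(m))|=O(1)$ vertices meeting every copy of $\cF_5^S(m)$. Then $\cH-W$ is $\cF_5^S(m)$-free, so $e(\cH-W)\le e(T_3(n-|W|,3))$. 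This shows that roughly $t$ vertices must carry almost full degree.

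\textbf{Upper bound, main argument.}
\begin{enumerate}[label=(\roman*)]
\item Let $T'\subseteq W$ be the set of vertices of degree at least $(1-\varepsilon)\binom{n}{2}$.
\item Counting gives
\[
e(\cH)\le e(T_3(n-|W|,3))+|T'|\binom{n}{2}+\bigl(|W|-|T'|\bigr)\Bigl(\tfrac{1}{3}+\varepsilon\Bigr)n^2 .
\]
Comparing with $e(\cG(t,m,n))\approx \tfrac{n^3}{27}+t\tfrac{n^2}{2}$, and noting that removing $|W|$ vertices from $T_3$ costs about $|W|n^2/3$, we get $|T'|\ge t$.
\item A vertex of huge degree can always be included in a copy of $\cF_5^S(m)$ avoiding any fixed $O(1)$-set. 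With $t+1$ such vertices we could therefore greedily build $t+1$ disjoint copies. Hence $|T'|=t$ exactly.
\item Then $\cH-T'$ must be $\cF_5^S(m)$-free. Otherwise a copy there, together with $t$ disjoint copies each using one vertex of $T'$ (built greedily via their large links), gives $t+1$ disjoint copies.
\item It follows that
\[
e(\cH)\le e(T_3(n-t,3))+\Bigl[\binom{t}{3}+\binom{t}{2}(n-t)+t\binom{n-t}{2}\Bigr]=e(\cG(t,m,n)),
\]
and equality forces $\cH-T'\cong T_3(n-t,3)$ (by uniqueness in \cite{bcl}) and all hyperedges meeting $T'$ to be present.
\end{enumerate}

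\textbf{Main obstacle.} The hardest step is the greedy embedding in (iii) and (iv): showing that a vertex $v$ with link of density near $1$ lies in a copy of $\cF_5^S(m)$ avoiding a fixed $O(1)$-set. I would try first to take $v$ as the added vertex of $\cF_5^S(m)$. Its link is nearly complete, so it contains many pairs. Then find $T_3(3m,3)$ inside $\cH-W$, which has density about $2/9$ and is near-extremal, so it contains large complete tripartite subgraphs by supersaturation. Finally choose two vertices of one part forming a link edge of $v$; this is possible because the link of $v$ misses only $\varepsilon n^2$ pairs.

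For step (ii), if the degree counting is too lossy, I would replace it with stability. Since $\cH-W$ is $\cF_5^S(m)$-free, stability says it is $o(n^3)$-close to $T_3$. Then the low-degree vertices of $W$ each contribute at most about $n^2/3$ more than a typical vertex.
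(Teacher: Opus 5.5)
Your freeness check is correct, and the skeleton (a bounded transversal $W$ from a maximal family of disjoint copies, then exactly $t$ heavy vertices, then $\cH-T'$ being $\cF_5^S(m)$-free) is a workable route. However, step (ii) fails as written.

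\textbf{The gap in step (ii).} The definition of $T'$ only gives $d(w)<(1-\varepsilon)\binom{n}{2}\approx n^2/2$ for $w\in W\setminus T'$. Nothing justifies the bound $(\frac13+\varepsilon)n^2$. Your constants are also off by a factor of $3$: since $e(T_3(n,3))\approx n^3/27$, a typical vertex has degree about $n^2/9$, and deleting $|W|$ vertices costs about $|W|n^2/9$, not $|W|n^2/3$.

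With the correct numbers your count gives only $e(\cH)\le e(T_3(n,3))+|W|(\frac12-\frac19)n^2+O(n)$. Since $|W|$ may be as large as $t(3m+1)$, the counting does not exclude, say, $2t$ vertices of $W$ of degree about $0.45n^2$ each together with $T'=\emptyset$. Even granting your $(\frac13+\varepsilon)n^2$, each vertex of $W\setminus T'$ may contribute a net $(\frac29+\varepsilon)n^2$. Two of them outweigh the net $\frac{7}{18}n^2$ of one missing heavy vertex, so $|T'|\ge t$ does not follow whenever $|W|\ge t+1$. Without $|T'|=t$, steps (iv) and (v) cannot start. Your stability fallback (``at most about $n^2/3$ more than a typical vertex'') has the same defect.

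\textbf{The missing ingredient.} You need an extension lemma at the correct threshold: every vertex $x$ with $d(x)\ge(\frac19+\eta)n^2$, not only those of nearly full degree, lies in a copy of $\cF_5^S(m)$ avoiding any prescribed $O(1)$-set. This is the real obstacle; the near-full-degree embedding you flag is the easy case. It needs the stability partition $A,B,C$ and its typical vertices.
\begin{itemize}
\item If the link of $x$ has $\Omega(n^2)$ edges inside one part, use $x$ as the added vertex $v$.
\item Otherwise, since each of $A\times B$, $B\times C$, $A\times C$ contains only about $n^2/9$ pairs, the link has $\Omega(n^2)$ edges in two different classes, say $Y\times Z$ and $X\times Y$. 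Pick a link edge $s_2v$ with $s_2\in X$ typical and $v\in Y$, and add $m-2$ further typical $X$-vertices. Apply K\H{o}v\'ari--S\'os--Tur\'an to the intersection of the link of $x$ in $Y\times Z$ with the common link of these $m-1$ typical vertices. This gives a $T_3(3m,3)$ whose $X$-part contains $x$ and $s_2$, and $xs_2v$ is the added edge.
\end{itemize}
Now set $T':=\{x: d(x)\ge(\frac19+\eta)n^2\}$. Then (iii) gives $|T'|\le t$. The bound $e(\cH)\le e(T_3(n-|W|,3))+|T'|\binom{n}{2}+(|W|-|T'|)(\frac19+\eta)n^2$ gives $|T'|\ge t$ for $\eta\ll 1/(tm)$. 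Steps (iv)--(v) then go through.

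\textbf{How the paper does it.} The paper avoids a vertex-degree dichotomy altogether. It splits extra edges into based ones (two typical vertices in a common part), which extend to a copy avoiding any bounded set, and unbased ones, which are charged against the missing edges $\cM$ at a small ratio. It then shows that almost all extra edges meet $t$ fixed vertices.
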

The paper is organized as follows. In Section \ref{sec: notation}, we introduce the notation and the stability lemma of $\cF_5$. In Section \ref{sec: section 3}, we give the proof of Theorem \ref{thm: cF_5^S(m,B)}. We give the proof of Theorem \ref{thm: F5 v8 t} in Section \ref{sec: section 4}, the proof of Theorem \ref{thm: F5 v3 t} in Section \ref{sec: section 5}, and the proof of Theorem \ref{thm: t cF_5^S(m)} in Section \ref{sec: section 6}.



\section{Notation and preliminaries}\label{sec: notation}
In this paper, we use standard terminology and notation (see, e.g., \cite{Bol2}).
Since we focus on $3$-uniform hypergraphs, all hypergraphs in the rest of this paper are $3$-uniform by default.

For a hypergraph $\cH$ (resp. a set $\cE$ of hyperedges), and a set $S\subseteq V(\cH)$ (resp. $S\subseteq \bigcup_{E\in \cE}E$), let $\cH(S)$ (resp. $\cE(S)$) denote the set of hyperedges in $E(\cH)$ (resp. $\cE$) containing $S$, and $d_{\cH}(S)=|\cH(S)|$ (resp. $d_{\cE}(S)=|\cE(S)|$) denote the degree of $S$ in $\cH$ (resp. $\cE$). When $S=\{v\}$, we write $\cH(v)$ (resp. $\cE(v)$) and $d_{\cH}(v)$ (resp. $d_{\cE}(v)$) instead of $\cH(\{v\})$ (resp. $\cE(\{v\})$) and $d_{\cH}(\{v\})$ (resp. $d_{\cE}(\{v\})$). For a set $S\subseteq V(\cH)$, let $\cH[S]$ denote the hypergraph with vertex set $S$ and hyperedges contained in $S$. 

Recall that $L(\cH(v))$ denotes the link graph of $v$ in a $3$-graph $\cH$, which is a graph with vertex set $(\bigcup_{E\in \cH(v)}E)\setminus\{v\}$, and edge set $\{xy: vxy\in E(\cH)\}$.

In this paper, we mainly consider the Tur\'an number of $\cF$, which is a subgraph of a blow-up of $\cF_5$.
For a $3$-uniform hypergraph $\cH$ with a partition $V(\cH)=A\cup B\cup C$ of its vertex set, we call a hyperedge \textbf{good} if it has one vertex in each part.
Let $\cH_{g}$ denote the set of good hyperedges in $\cH$.
For the partition $V(\cH)=A\cup B\cup C$, let $\cK(\cH)$ denote the complete tripartite $3$-uniform hypergraph with parts $A,B,C$.
The hyperedges in $E(\cK)$ but not in $E(\cH)$ are called \textbf{missing} hyperedges and their set is denoted by $\cM(\cH) $. The hyperedges in $E(\cH)$ but not in $E(\cK)$ are called \textbf{extra} hyperedges and their set is denoted by $\cE(\cH) $.
First, let us introduce the stability result of a blow-up of $\cF_5$.
\begin{thm}[\cite{kemu,gowers2001new}]\label{thm: blow-up sta}
    For integers $m,n \geq 1$,
    let $\cF$ be a subgraph of a blow-up of $\cF_5$, and $\cG$ be the maximum $\cF$-free 3-uniform hypergraph on $n$ vertices. 
    For every $\epsilon >0$, there exists $n_0=n_0(\epsilon)$ such that for $n>n_0$, there exists a $3$-partition of $V(\cG)$ with $V(\cG)=A\cup B\cup C$ and $\cK(\cG)=\cK_{A,B,C}$ being the complete tripartite hypergraph, such that the following holds:

   \noindent1. The size of $A$ (resp. $B,C$) is at least $\frac{n}{3}-\epsilon n$ and at most $\frac{n}{3}+\epsilon n$;

   \noindent 2. The size of $\cM $ and $\cE $ defined above satisfy $|\cM(\cG) |\leq  \epsilon n^3$ and $|\cE(\cG) |\leq \epsilon n^3$.
\end{thm}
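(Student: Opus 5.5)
Since Theorem~\ref{thm: blow-up sta} is cited from \cite{kemu,gowers2001new}, the plan is to derive it from two known inputs: the Keevash--Mubayi stability theorem for $\cF_5$ itself, and the hypergraph removal lemma (the Gowers / R\"odl--Skokan regularity machinery). Fix $\cF$ contained in a blow-up $\cF_5(t,\dots,t)$ and let $\cG$ be a maximum $\cF$-free $3$-graph on $n$ vertices.

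The first step is a lower bound: $e(\cG)\ge e(T_3(n,3))$. This holds because $T_3(n,3)$ is $\cF_5$-free. In $T_3$, the edges $f_1f_2f_3$ and $f_1f_2f_4$ force $f_3,f_4$ into the same part, and then $f_3f_4f_5$ is not crossing. The same argument rules out every blow-up of $\cF_5$ and hence $\cF$, so $e(\cG)\ge n^3/27-O(n^2)$.

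The second step uses removal to reduce to $\cF_5$-free graphs. By supersaturation (Erd\H{o}s' blow-up argument), if $\cG$ contained more than $\delta n^5$ copies of $\cF_5$, then $\cG$ would contain a copy of the $t$-blow-up of $\cF_5$, hence of $\cF$. So $\cG$ has $o(n^5)$ copies of $\cF_5$. The hypergraph removal lemma then lets us delete at most $\epsilon' n^3$ hyperedges to obtain an $\cF_5$-free subgraph $\cG'$ with $e(\cG')\ge n^3/27-\epsilon' n^3$.

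The third step applies stability to $\cG'$. Keevash--Mubayi stability for $\cF_5$ says that an $\cF_5$-free $3$-graph with $e\ge n^3/27-\epsilon'' n^3$ differs from some $T_3(n,3)$ in at most $\epsilon n^3/2$ hyperedges. Taking the partition $A\cup B\cup C$ of that Tur\'an graph, item~1 on the part sizes follows. Concretely, if a part had size outside $n/3\pm\epsilon n$, then the complete tripartite graph would have at most $n^3/27-c\epsilon^2n^3$ edges, contradicting the edge count; so we choose the constants appropriately, with $\epsilon''\ll\epsilon^2$.

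It remains to bound $\cM$ and $\cE$, which I expect to be the main bookkeeping point. Every hyperedge of $\cG'$ outside $\cK_{A,B,C}$ is in the stability error set, and every missing edge of $\cG'$ is too. Passing back to $\cG$ adds at most $\epsilon' n^3$ extra edges. Since $\cG'\subseteq\cG$, the missing edges of $\cG$ are a subset of those of $\cG'$. Hence $|\cM(\cG)|,|\cE(\cG)|\le\epsilon n^3$ once $\epsilon'\le\epsilon/2$.

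The only genuinely deep ingredient is the removal lemma, which makes the second step the obstacle. Alternatively, one can avoid removal by following the symmetrization-free argument of Keevash--Mubayi directly for $\cF$. That route works with the link multigraphs, but it is more technical, so I would rely on \cite{gowers2001new}.
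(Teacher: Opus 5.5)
The paper gives no proof beyond citing Keevash--Mubayi and Gowers. Your Steps 2--4 are exactly the derivation that pairing of references stands for, and they are sound:
\begin{itemize}
\item supersaturation gives $o(n^5)$ copies of $\cF_5$;
\item the $3$-uniform removal lemma turns $\cG$ into an $\cF_5$-free $\cG'$;
\item Keevash--Mubayi stability applies to $\cG'$, with part sizes controlled by $|A||B||C|\le n^3/27-\Omega(\epsilon^2n^3)$;
\item the bookkeeping $\cM(\cG)\subseteq\cM(\cG')$ and $\cE(\cG)\subseteq\cE(\cG')\cup(\cG\setminus\cG')$ finishes the bounds.
\end{itemize}

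The gap is in Step 1. You show that no blow-up of $\cF_5$ embeds in $T_3(n,3)$ and conclude ``hence $\cF$''. Containment transfers freeness only the other way. From $\cF\subseteq\cF_5(t)$ you get that $\cF$-free graphs are $\cF_5(t)$-free, which you use correctly in Step 2. You do not get that $\cF_5(t)$-free graphs are $\cF$-free.

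This is not a harmless slip. A single hyperedge, or $K^{(3)}_{t,t,t}$ (the blow-up of the edge $f_1f_2f_3$), is a subgraph of a blow-up of $\cF_5$ and lies inside $T_3(n,3)$. For such $\cF$ the extremal hypergraph has $o(n^3)$ edges. Then any partition with parts of size about $n/3$ has $|\cM(\cG)|\ge(1/3-\epsilon)^3n^3-o(n^3)>\epsilon n^3$, so the statement as written is false.

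So the theorem carries a tacit hypothesis, and your proof breaks exactly where it is needed. The hypothesis is that $\cF$ is not $3$-partite, i.e.\ not contained in any complete tripartite $3$-graph. For subgraphs of blow-ups of $\cF_5$ this is equivalent to $\pi(\cF)=2/9$.

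It holds for every $\cF$ the paper applies the theorem to, since each of them contains a copy of $\cF_5$. In that case your $f_3,f_4,f_5$ argument, read in the correct direction, shows $T_3(n,3)$ is $\cF_5$-free and therefore $\cF$-free. With this hypothesis stated, $e(\cG)\ge e(T_3(n,3))$ holds and the rest of your argument goes through unchanged.
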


Let $F, \cG$ and $A,B,C$ be defined as in Theorem \ref{thm: blow-up sta}.
Recall that $\cG_g$ is the set of good hyperedges of $\cG$. Then for $\delta=\epsilon ^{1/3} >0$, let $A'$ be the set of vertices in $A$ contained in a small number of good hyperedges. More precisely,
\begin{equation}\label{eq: def of A'}
    A'=\{v\in A: d_{\cG_g}(v) < |B|\cdot|C|-2\delta n^2\}.
\end{equation}
Since for every $v\in A'$, $d_{\cM }(v)\geq 2\delta n^2$, by Theorem \ref{thm: blow-up sta}, we have
$$\epsilon n^3\ge |\cM |\geq \sum_{v\in A'}d_{\cM }(v)\geq \sum_{v\in A'}2\delta n^2\geq 2\delta n^2|A'|.$$
This implies that $|A'|\leq \frac{1}{2}\delta^2 n$. We can similarly define $B'$ and $C'$, and we have $|B'|\leq \frac{1}{2}\delta^2 n$ and $|C'|\leq \frac{1}{2}\delta^2 n$.
We set $X=A'\cup B'\cup C'$, then
\begin{equation}\label{eq: size of X}
    |X|\leq \frac{3}{2}\delta^2n.
\end{equation}
For a pair of vertices $u,v$ in different parts, we call this pair \textbf{proper} if $d_{\cM }(\{u,v\})\leq \epsilon^{\frac{1}{2}}n$, and \textbf{improper} otherwise.
Note that since $|\cM |\leq \epsilon n^3$, the number of improper pairs is at most $3\epsilon^{\frac{1}{2}} n^2$.

\section{Tur\'an number of \texorpdfstring{$\cF_5^S(m,t,K_{s,s})$}{F\_5^\{S(m,t,K\_\{s,s\})\}} }\label{sec: section 3}
In this section, we prove Theorem \ref{thm: cF_5^S(m,B)}. 
We need the following theorems about complete multipartite graphs and hypergraphs.
\begin{thm}[\cite{kovari1954}]\label{thm: ex of Kss}
For an integer $s\geq 2$, we have
    $\ex(n,K_{s,s})\leq C_1(s)n^{2-\frac{1}{s}}$ for some constant $C_1(s)$.
\end{thm}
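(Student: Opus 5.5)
The statement to prove is the Kővári--Sós--Turán bound (Theorem~\ref{thm: ex of Kss}). I would use the standard double counting of ``stars'' $K_{1,s}$.

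Let $G$ be a $K_{s,s}$-free graph on $n$ vertices with degrees $d_1,\dots,d_n$ and $e$ edges, so that $\sum_i d_i=2e$. Count the pairs $(v,S)$ where $S$ is an $s$-subset of the neighbourhood $N(v)$. This number is $\sum_{v}\binom{d_v}{s}$.

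For the upper bound, fix an $s$-set $S$. If $S$ were contained in the neighbourhoods of $s$ distinct vertices $v_1,\dots,v_s$, these would lie outside $S$, since no vertex is its own neighbour. Then $\{v_1,\dots,v_s\}$ and $S$ would span a $K_{s,s}$, a contradiction. Hence each $S$ is counted at most $s-1$ times, and
\[
\sum_{v}\binom{d_v}{s}\le (s-1)\binom{n}{s}\le (s-1)\frac{n^s}{s!}.
\]

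For the lower bound, use convexity of $x\mapsto\binom{x}{s}$, extended as $0$ for $x<s-1$ and as the polynomial $x(x-1)\cdots(x-s+1)/s!$ beyond. Jensen's inequality gives
\[
\sum_v\binom{d_v}{s}\ge n\binom{2e/n}{s}.
\]
Either $2e/n<2s$, in which case $e\le sn$ and we are done, or $2e/n\ge 2s$. In the second case each factor satisfies $2e/n-j\ge e/n$ for $j<s$, so
\[
n\binom{2e/n}{s}\ge n\frac{(e/n)^s}{s!}.
\]

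Combining the two bounds yields $n(e/n)^s\le (s-1)n^s$, hence
\[
e\le (s-1)^{1/s}n^{2-1/s}.
\]
In either case $e\le C_1(s)n^{2-1/s}$ with $C_1(s)=s$ (for $n\ge1$, since $sn\le sn^{2-1/s}$).

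The only delicate point is the Jensen step. The extension of $\binom{x}{s}$ must be convex, and handling small average degree separately, as done above, avoids that subtlety entirely.
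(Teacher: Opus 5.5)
Your argument is correct: it is the classical K\H{o}v\'ari--S\'os--Tur\'an double count of $s$-stars, and the paper gives no proof of its own; it simply cites that result and uses it as a black box. One small correction to your closing remark: the case split on $2e/n$ does not remove the need for convexity, since individual $d_v$ can still lie below $s-1$ even when the average is large. Jensen is justified because your piecewise extension really is convex: by Rolle all roots of the second derivative of $x(x-1)\cdots(x-s+1)$ lie below $s-1$, and at $x=s-1$ the slope jumps from $0$ up to $1/s$. The case split is only what makes $\binom{2e/n}{s}\ge (e/n)^s/s!$ valid.
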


\begin{thm}[\cite{geneson2021generalization}]\label{thm: ex3 of Ktss} 
When $s,t\geq 2$,
    $\ex_3(n,K_{t,s,s})\leq C_2(s,t)n^{2-\frac{1}{s^2}}$ for some constant $C_2(s,t)$. When $t=1$ and $s\geq 2$, we have
    $\ex_3(n,K_{1,s,s})\leq C_2'(s)n^{2-\frac{1}{s}}$
    for some constant $C_2'(s)$.
\end{thm}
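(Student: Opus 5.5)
The plan is to reduce both bounds to statements about link graphs, using Theorem~\ref{thm: ex of Kss} and a supersaturated version of it. Let $\cH$ be an $n$-vertex $K_{t,s,s}$-free $3$-graph with average degree $\bar d=3e(\cH)/n$. Each exponent in the statement will come out as a bound on the vertex degree (the number of edges of a link graph $L(\cH(v))$), namely $\bar d=O(n^{2-1/s^2})$, respectively $d(v)=O(n^{2-1/s})$. I will therefore read the statement in that normalization, in which $e(\cH)=n\bar d/3$. I expect that this is how the quantities $\ex_3(n/3,K_{t,s,s})$ are used in Theorem~\ref{thm: cF_5^S(m,B)}: there they appear multiplied by a linear-size part, through the vertices $v$ whose links carry extra edges. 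I flag this up front, because for the raw edge count a Steiner triple system already has $\Theta(n^2)$ edges and no $K_{1,2,2}$. So the content to be proved is the link-graph bound, and I will state it in that form.

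\textbf{Case $t=1$.} A copy of $K_{1,s,s}$ with its $1$-part at $v$ is exactly a copy of $K_{s,s}$ in $L(\cH(v))$. Hence $K_{1,s,s}$-freeness means every link graph is $K_{s,s}$-free. Theorem~\ref{thm: ex of Kss} then gives $d(v)=e(L(\cH(v)))\le C_1(s)n^{2-1/s}$ for every $v$, so we may take $C_2'(s)=C_1(s)$, and summing over $v$ gives the total count. This case is immediate.

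\textbf{Case $t\ge 2$.} Here individual links may contain $K_{s,s}$'s, so I will double count pairs $(v,K)$, where $K$ is a labelled copy of $K_{s,s}$ on $V(\cH)$ with $K\subseteq L(\cH(v))$. For the upper bound: a fixed $K$ lies in the links of at most $t-1$ vertices, since otherwise $t$ such vertices together with $K$ form a $K_{t,s,s}$. There are at most $n^{2s}$ choices of $K$, so the number of pairs is at most $(t-1)n^{2s}$. For the lower bound I use the Kővári--Sós--Turán counting (the supersaturation form of Theorem~\ref{thm: ex of Kss}). By two applications of Jensen, a graph with $n$ vertices and $d$ edges contains at least $c(s)\,d^{s^2}/n^{2s^2-2s}$ labelled copies of $K_{s,s}$, provided $d\ge C n^{2-1/s}$. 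Summing over $v$ and applying convexity of $x\mapsto x^{s^2}$ once more (vertices of small degree contribute at most $O(n^{3-1/s})$ edges in total, negligible against $n\cdot n^{2-1/s^2}$), I get
\[
n\,c(s)\,\frac{\bar d^{\,s^2}}{n^{2s^2-2s}}\le (t-1)\,n^{2s}.
\]
This rearranges to $\bar d^{\,s^2}\le C\,n^{2s^2-1}$, that is, $\bar d\le C_2(s,t)\,n^{2-1/s^2}$, which is the claimed bound.

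The main obstacle is the supersaturation step: getting the $K_{s,s}$ count with the right exponent while uniformly handling sparse links. I plan to do it in the standard way. First count stars $\sum_x\binom{\deg x}{s}$ to obtain many $s$-sets with large common neighbourhood. Then count $s$-subsets of those common neighbourhoods, with Jensen applied at both stages. Finally, discard vertices whose degree falls below the $n^{2-1/s}$ threshold before averaging. The remaining steps are routine bookkeeping of constants depending only on $s$ and $t$.
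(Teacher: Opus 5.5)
Your proof is correct, and you were right not to take the exponents literally. As printed, with $2-\frac{1}{s^2}$ and $2-\frac1s$ bounding the edge count, the statement is false: a Steiner triple system is $K_{1,2,2}$-free, hence $K_{t,s,s}$-free, and has $\Theta(n^2)$ edges. The intended bounds are $\ex_3(n,K_{t,s,s})=O(n^{3-1/s^2})$ and $\ex_3(n,K_{1,s,s})=O(n^{3-1/s})$, which is exactly what your degree bounds give. The paper's own later use $\ex_3(\epsilon n,K_{t,s,s})\le C_2(s,t)(\epsilon n)^{3-1/s^2}$ in (\ref{eq: bip eq2}) confirms this reading. Your guess about how the quantity enters the main theorem is off, however: there $\ex_3(n/3,K_{t,s,s})$ simply bounds the hyperedges inside one part, and is not multiplied by a linear-size set. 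It is therefore cleaner to call this a typo in the exponent than a change of normalization.

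The paper does not prove this statement; it quotes it from Geneson. Your two arguments are nevertheless precisely the ones the paper uses for the bipartite analogue, Theorem~\ref{thm: ex bip Ktss}:
\begin{itemize}
\item for $t=1$, every link is $K_{s,s}$-free and Theorem~\ref{thm: ex of Kss} applies;
\item for $t\ge 2$, you discard vertices of degree below $Cn^{2-1/s}$, a total loss of $O(n^{3-1/s})=o(n^{3-1/s^2})$ since $s\ge 2$. You then apply Theorem~\ref{thm: supersat} and convexity, and compare with the at most $(t-1)n^{2s}$ pairs $(v,K)$.
\end{itemize}
The only point specific to the non-partite setting is that $v\notin V(K)$, so that the $t$ vertices and $K$ really span a $K_{t,s,s}$. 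This is automatic, since a link graph never contains its own vertex. So your proof is a self-contained reproof along the paper's own lines rather than a different route. As an aside, running the same double count with links required to contain $K_{t,s}$ instead of $K_{s,s}$ (each copy lying in at most $s-1$ links) gives the sharper bound $O(n^{3-1/(ts)})$ when $t<s$. It also recovers your $t=1$ bound as a special case.
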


\begin{thm}[\cite{ZhaoGTAC}]\label{thm: supersat}
    For an integer $s\geq 2$ there exists $C=C(s)>0$, $c=c(s)>0$ and an integer $n_0=n_0(s)$ such that the following holds. For every graph $G$ with $n>n_0$ vertices, if $e(G)\geq C\cdot n^{2-\frac{1}{s}}$, then $G$ contains at least \[c\cdot \left(\frac{e(G)}{\binom{n}{2}}\right)^{s^2}n^{2s}\]
    copies of $K_{s,s}$.
\end{thm}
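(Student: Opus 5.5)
The plan is the standard Erdős–Simonovits supersaturation argument: double counting with Jensen's inequality, applied twice, following the Kővári–Sós–Turán proof behind Theorem~\ref{thm: ex of Kss}. The only real care is in choosing $C$ large enough that every average we feed into a binomial coefficient is at least a constant multiple of $s$. Throughout, write $e=e(G)$, $p=e/\binom{n}{2}$ and $d=2e/n=p(n-1)$ for the average degree. We use the convex extension $f_s(x)=\binom{x}{s}$ for $x\ge s-1$ and $f_s(x)=0$ otherwise. It satisfies $f_s(x)\ge (x/2)^s/s!$ whenever $x\ge 2s$.

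\textbf{Step 1: counting stars.} We count pairs $(v,S)$ with $S$ an $s$-subset of the neighbourhood $N(v)$. By convexity,
\[
N_1:=\sum_{v} \binom{d(v)}{s}\ \ge\ n\, f_s(d)\ \ge\ \frac{n}{s!}\Big(\frac{d}{2}\Big)^s .
\]
This uses $d\ge 2s$, which holds once $e\ge C n^{2-1/s}$ with $C\ge s$ and $n>n_0$.

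\textbf{Step 2: counting $K_{s,s}$.} For an $s$-set $S$, let $d(S)$ denote the number of common neighbours of $S$. Then $\sum_{S} d(S)=N_1$, where the sum runs over all $\binom{n}{s}$ sets $S$. Every choice of an $s$-set $S$ together with an $s$-subset of its common neighbourhood gives a copy of $K_{s,s}$. Each copy arises at most twice, once for each side. Hence the number of copies is at least
\[
\frac12\sum_S \binom{d(S)}{s}\ \ge\ \frac12\binom{n}{s} f_s(\bar d),\qquad \bar d:=\frac{N_1}{\binom{n}{s}}\ \ge\ \frac{n d^s}{2^s n^s}=2^{-s}\,d^s n^{1-s}.
\]

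\textbf{Step 3: the main obstacle.} We must check that $\bar d\ge 2s$, so that the lower bound on $f_s$ applies. Since $d\ge p n/2$, we get $\bar d\ge 4^{-s}p^s n$. With $p\ge C n^{-1/s}$, this gives $\bar d\ge 4^{-s}C^s$. This is at least $2s$ once $C=C(s)$ is chosen large enough; this is exactly where the hypothesis $e(G)\ge Cn^{2-1/s}$ enters.

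\textbf{Conclusion.} Then $f_s(\bar d)\ge (\bar d/2)^s/s!$. The number of copies of $K_{s,s}$ is therefore at least
\[
\frac{1}{2}\cdot \frac{n^s}{2 s^s}\cdot \frac{(4^{-s}p^s n)^s}{2^s s!}\ =\ c(s)\,p^{s^2} n^{2s}.
\]
Here we used $\binom{n}{s}\ge n^s/(2s^s)$ for large $n$. This gives the claimed bound with $p=e(G)/\binom n2$.
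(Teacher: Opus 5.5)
Your proof is correct. It is the standard Kővári–Sós–Turán double count with Jensen's inequality applied twice. Your choices ($C\ge s$ to get $d\ge 2s$, and $4^{-s}C^s\ge 2s$ to get $\bar d\ge 2s$) make every estimate go through, and $\binom{n}{s}\ge (n/s)^s$ even holds for all $n\ge s$.

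The paper gives no proof of its own: the statement is quoted from the cited reference, and your argument is the usual way that result is established.
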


\begin{thm}\label{thm: ex bip Ktss}
For every $\epsilon>0$ and $s,t\geq 2$, there is a constant $\delta(s,t)$ such that
when $n$ is sufficiently large, and $m\geq \epsilon n$, then
$$\ex^{bip}_3(m,n,K_{t,s,s})\leq \delta(s,t)\epsilon^{-\frac{1}{2}}mn^{2-\frac{1}{s^2}}$$ when $t\geq 2$. And when $t=1$, for every $m$ and sufficiently large $n$, we have
    $$\ex^{bip}_3(m,n,K_{1,s,s})\leq C_3(s) mn^{2-\frac{1}{s}}$$
    for some constant $C_3(s)$.
\end{thm}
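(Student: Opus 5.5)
The plan is to count pairs $(a, K)$, where $a\in A$ and $K$ is a copy of $K_{s,s}$ in $B$ lying in the link graph $L_a$ of $a$. Here $L_a$ is the graph on $B$ formed by the pairs $xy\subseteq B$ with $axy$ a hyperedge. Let $d_a=e(L_a)$, so that the total number of hyperedges is $\sum_{a\in A} d_a$.

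\emph{Case $t=1$.} Forbidding $K_{1,s,s}$ means that no single link graph contains a $K_{s,s}$. By Theorem \ref{thm: ex of Kss}, each $d_a\le C_1(s)n^{2-1/s}$. Summing over the $m$ vertices of $A$ gives the bound with $C_3(s)=C_1(s)$, for every $m$ and for $n$ large.

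\emph{Case $t\ge 2$.} Since $K_{t,s,s}$ is forbidden, each copy of $K_{s,s}$ in $B$ lies in at most $t-1$ of the link graphs. Hence
\[
N:=\sum_{a\in A}\#\{K_{s,s}\subseteq L_a\}\le (t-1)\binom{n}{s}^2\le t\,n^{2s}.
\]
Now let $A_1=\{a: d_a\ge C(s)n^{2-1/s}\}$. The vertices outside $A_1$ contribute at most $C(s)mn^{2-1/s}$ hyperedges, and this is at most $mn^{2-1/s^2}$ for $s\ge 2$ and $n$ large. For $a\in A_1$, Theorem \ref{thm: supersat} gives at least $c\,(d_a/\binom n2)^{s^2}n^{2s}\ge c'\,d_a^{s^2}n^{2s-2s^2}$ copies of $K_{s,s}$ in $L_a$. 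Summing over $A_1$ and comparing with the bound on $N$ yields
\[
\sum_{a\in A_1} d_a^{s^2}\le (t/c')\,n^{2s^2}.
\]
By the power mean (Hölder) inequality, $\sum_{a\in A_1} d_a\le |A_1|^{1-1/s^2}\bigl(\sum_{a\in A_1} d_a^{s^2}\bigr)^{1/s^2}$, and therefore
\[
\sum_{a\in A_1} d_a\le C\,m^{1-1/s^2}n^2 .
\]

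It remains to turn this into the claimed form $\delta\epsilon^{-1/2}mn^{2-1/s^2}$, using $m\ge\epsilon n$. We have $m^{1-1/s^2}n^2=mn^{2-1/s^2}\,(n/m)^{1/s^2}\le mn^{2-1/s^2}\,\epsilon^{-1/s^2}$. Since $s\ge2$ gives $1/s^2\le 1/2$, and we may take $\epsilon\le 1$, this is at most $\epsilon^{-1/2}mn^{2-1/s^2}$. Adding the contribution from $A\setminus A_1$ then gives the bound with some $\delta(s,t)$.

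The only delicate step is applying the supersaturation theorem correctly: its threshold $C n^{2-1/s}$ must be met, which is exactly why the low-degree vertices are split off, and the exponent bookkeeping must come out right. The factor $\epsilon^{-1/2}$ is generous, because the exponent actually obtained is $\epsilon^{-1/s^2}$. If $\epsilon>1$ is allowed, one simply absorbs $\epsilon^{-1/s^2}\le\max(1,\epsilon^{-1/2})$ into $\delta$.
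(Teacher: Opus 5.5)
Your proof follows the paper's route. You discard the vertices of $A$ whose links have fewer than $C(s)n^{2-1/s}$ edges, apply Theorem~\ref{thm: supersat} to the remaining links, and compare the resulting count of pairs $(a,K)$ with the fact that each $K_{s,s}$ in $B$ lies in at most $t-1$ links, using convexity of $x^{s^2}$ for the averaging. The paper argues by contradiction via Jensen, while you argue directly via H\"older. Your direct form is slightly cleaner: it gives $\ex^{bip}_3(m,n,K_{t,s,s})\le C(s)mn^{2-1/s}+C'(s,t)m^{1-1/s^2}n^2$ for every $m$. Hence you get the factor $\epsilon^{-1/s^2}$, which implies the stated $\epsilon^{-1/2}$ whenever $\epsilon\le 1$.

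Your final sentence, however, is wrong. For $\epsilon>1$ you have $\epsilon^{-1/s^2}>\epsilon^{-1/2}$. The inequality $\epsilon^{-1/s^2}\le\max(1,\epsilon^{-1/2})=1$ only yields $\delta\, mn^{2-1/s^2}$, not $\delta\epsilon^{-1/2}mn^{2-1/s^2}$. The leftover factor $\epsilon^{1/2-1/s^2}$ is unbounded in $\epsilon$, so it cannot be absorbed into a $\delta(s,t)$ that does not depend on $\epsilon$.

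What your argument actually proves is the statement for $\epsilon$ in any bounded range $(0,\epsilon_0]$, with $\delta$ depending on $\epsilon_0$; it does not prove it for all $\epsilon>0$. This is no worse than the paper. Its step $(2M)^{s^2}\ge M^2$, with $M=\delta\epsilon^{-1/2}$, needs $M$ bounded below, i.e.\ $\epsilon$ bounded above, so it too covers only bounded $\epsilon$. You should state this restriction explicitly rather than claim the large-$\epsilon$ case follows automatically.
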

\begin{proof}
Suppose that a $3$-uniform hypergraph $\cG$ has a partition $A,B$ with $|A|=m$ and $|B|=n$ and every hyperedge contains one vertex in $A$ and two vertices in $B$. Assume furthermore that there are no $t$ vertices in $A$ such that the intersection of their link graphs in $B$ contains a fixed copy of $K_{s,s}$. Suppose that $\cG$ has the maximum number of hyperedges among all such hypergraphs, and assume that $e(\cG)> \delta(s,t)\epsilon^{-\frac{1}{2}}mn^{2-\frac{1}{s^2}}$, where $\delta(s,t)$ is a constant we will determine later. 

First, we deal with the case where $t\geq 2$. We set $M=\delta(s,t)\cdot \epsilon^{-\frac{1}{2}}$.
We may assume that for every $v\in A$, the number of hyperedges containing $v$ is at least $C\cdot n^{2-\frac{1}{s}}$, where $C$ is described in Theorem \ref{thm: supersat}.
Otherwise, we can delete those hyperedges. In this way, we delete at most $Cmn^{2-\frac{1}{s}}=o(mn^{2-\frac{1}{s^2}})$ hyperedges.  

Then, according to Theorem \ref{thm: supersat}, for every $v\in A$, 
$\cG(v)$ contains at least 
$c(s)\cdot \left(\frac{|\cG(v)|}{{\binom{n}{2}}}\right)^{s^2}n^{2s}$
copies of $K_{s,s}$ where $c(s)$ is described in Theorem \ref{thm: supersat}.

Next, we construct a bipartite graph $H=H(C,D)$, where $C=A$ and $D$ consists of all the $2s$-sets in $B$. A vertex $v\in C$ is adjacent to a $2s$-set $S$ in $D$, if there exists a copy of $K_{1,s,s}$ containing $v$ as the one-vertex part and all the vertices of $S$.

Let $\eta(s)$ denote the number of copies of $K_{s,s}$ in $K_{2s}$. We set $\delta(s,t)=\left(\frac{t\eta(s)}{c(s)}\right)^{\frac{1}{2}}$.
Then we have $M=\delta(s,t)\cdot \epsilon^{-\frac{1}{2}}=\left(\frac{t\eta(s)}{\epsilon c(s)}\right)^{\frac{1}{2}}$,
and the number of edges in $H$ is at least
\begin{align*}
    \sum_{v\in A}c(s)\cdot \left(\frac{|\cG(v)|}{{\binom{n}{2}}}\right)^{s^2}n^{2s}&\geq m\cdot c(s)\cdot \left(\frac{1}{m}\frac{\sum_{v\in A}|\cG(v)|}{{\binom{n}{2}}}\right)^{s^2}n^{2s}\\
    &\geq c(s)m\cdot M^2n^{2s-1}
    \geq \epsilon c(s)M^2\cdot n^{2s}\\
    &>\eta(s)\cdot t\cdot \binom{n}{2s}.
\end{align*}
The first inequality holds because the function $x^{s^2}$ is convex and the second inequality uses that $e(\cG)\geq \delta(s,t)\epsilon^{-\frac{1}{2}}mn^{2-\frac{1}{s^2}}$. The above inequality implies that there is a copy of $K_{s,s}$ inside $B$ that is counted at least $t$ times.
Then there is a copy of $K_{s,s}$ (denoted by $K$), and $t$ vertices $v_1,\dots,v_t\in A$ such that $K$ is a subgraph of $\cG(v_i)$ for $i=1,\dots,t$, a contradiction.

When $t=1$, note that $\cG(v)$ is $K_{s,s}$-free, according to Theorem \ref{thm: ex of Kss}, we have 
$|\cG(v)|\leq C_1(s)\cdot n^{2-\frac{1}{s}}$ for some $C_1(s)$. Then $e(\cG)\leq C_1(s)mn^{2-\frac{1}{s}}$, and we finish the proof by setting $C_3(s)=C_1(s)$.
\end{proof}
With the above results, we have the following inequality.
\begin{lemma}\label{lem: inequality of ex3 bip}
    For every $0<\epsilon<\frac{1}{3}$ and $s,t\geq 2$, there is a constant $\delta(s,t)$ such that
$$\ex^{bip}_3((1/3+\epsilon)n,(1/3+\epsilon)n,K_{t,s,s})-\ex^{bip}_3(n/3 ,n/3,K_{t,s,s})\leq 10\epsilon^{\frac{1}{2}}\delta(s,t)n^{3-\frac{1}{s^2}}.$$
\end{lemma}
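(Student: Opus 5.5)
Our plan is to compare an extremal configuration on the larger sides with one on the smaller sides. We pass from the larger instance to the smaller one by deleting vertices, and we control the number of lost hyperedges with the per-vertex degree bounds that come out of Theorem~\ref{thm: ex bip Ktss}.

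Write $m_1=n_1=(1/3+\epsilon)n$ and $m_0=n_0=n/3$. Let $\cG$ be an extremal hypergraph for $\ex^{bip}_3(m_1,n_1,K_{t,s,s})$, with parts $A$ and $B$. Choose subsets $A_0\subseteq A$ and $B_0\subseteq B$ with $|A_0|=n/3$ and $|B_0|=n/3$. The induced subhypergraph on $A_0\cup B_0$ is still $K_{t,s,s}$-free in the bipartite sense, so it has at most $\ex^{bip}_3(n/3,n/3,K_{t,s,s})$ hyperedges. The difference in the lemma is therefore at most the number of hyperedges of $\cG$ meeting $A\setminus A_0$ or $B\setminus B_0$. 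To make this count small we choose $A_0$ and $B_0$ by averaging, either uniformly at random or greedily keeping low-degree vertices. Then the deleted part carries at most a proportional share of the hyperedges.

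On the $A$ side, a hyperedge meets a fixed vertex of $A$ exactly once. Removing the $\epsilon n$ vertices of smallest degree therefore loses at most $\frac{\epsilon n}{m_1}e(\cG)\le 3\epsilon\, e(\cG)$ hyperedges. On the $B$ side, each hyperedge has two vertices in $B$. Removing $\epsilon n$ vertices of smallest degree in $B$ loses at most about $\frac{2\epsilon n}{n_1}e(\cG)\le 6\epsilon\, e(\cG)$. Altogether the loss is at most $9\epsilon\, e(\cG)$.

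It remains to bound $e(\cG)$ using Theorem~\ref{thm: ex bip Ktss}. We take the parameter $\epsilon'$ there to be a fixed constant, for instance $\epsilon'=1/3$, since $m_1\ge n_1/3$ holds trivially. This gives $e(\cG)\le \delta(s,t)\sqrt{3}\,m_1 n_1^{2-1/s^2}\le \delta(s,t)\sqrt3\,(2/3)^{3}n^{3-1/s^2}$, using $\epsilon<1/3$. Hence the loss is at most $9\epsilon\cdot\sqrt3(8/27)\delta(s,t)n^{3-1/s^2}\le 5\epsilon\,\delta(s,t)n^{3-1/s^2}$. Since $\epsilon<1$, we have $\epsilon\le\epsilon^{1/2}$, and the required bound $10\epsilon^{1/2}\delta(s,t)n^{3-1/s^2}$ follows. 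The constant $\delta(s,t)$ may be the one from Theorem~\ref{thm: ex bip Ktss}, possibly enlarged by an absolute factor; the lemma allows us to choose it.

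The main point requiring care is matching the constant $\delta(s,t)$ and the factor $\epsilon^{1/2}$ exactly as stated. The averaging step itself is routine. The delicate part is checking that Theorem~\ref{thm: ex bip Ktss} applies with $m_1$ proportional to $n_1$ and with $n$ large enough. If the constant comes out slightly too large, we will redefine $\delta(s,t)$ by an absolute multiplicative factor. This costs nothing, because the lemma only asserts the existence of such a constant.
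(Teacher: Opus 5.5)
Your proposal is correct and is essentially the paper's proof. You delete the $\epsilon n$ lowest-degree vertices on each side, bound the lost hyperedges by $9\epsilon\, e(\cG)$ via averaging (the paper phrases this through the degrees $x,y$ of the $\epsilon n$-th vertices, which is the same estimate), and bound $e(\cG)$ by Theorem~\ref{thm: ex bip Ktss}. The only difference is that the paper plugs the lemma's own $\epsilon$ into that theorem, which is where its factor $\epsilon^{1/2}$ comes from, whereas your fixed ratio parameter (even $1$ works, since both sides have equal size) gives the stronger $O(\epsilon)$ bound. One small slip: $(1/3+\epsilon)^{3-1/s^2}\le (2/3)^{3-1/s^2}$ rather than $(2/3)^3$, so your constant is slightly above $5$, which is still comfortably within $10\epsilon^{1/2}$.
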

\begin{proof}
    Suppose $\cG$ is a $3$-uniform graph with two parts $A,B$ each with size $(1/3+\epsilon)n$, which is the extremal hypergraph of $\ex_3^{bip}((1/3+\epsilon)n,(1/3+\epsilon)n,K_{t,s,s})$.
    Every hyperedge in $\cG$ consists of one vertex in $A$ and two vertices in $B$.

    We may assume $A=\{a_1,\dots,a_{(\frac{1}{3}+\epsilon)n}\}$ and $B=\{b_1,\dots,b_{(\frac{1}{3}+\epsilon)n}\}$, with $d_\cG(a_i)\leq d_{\cG}(a_j)$ and $d_\cG(b_i)\leq d_{\cG}(b_j)$ when $i<j$.
    Suppose $d_\cG(a_{\epsilon n})=x$, and $d_\cG(b_{\epsilon n})=y$.
    Then, we have $e(\cG)\geq \frac{1}{3}xn$, and $e(\cG)\geq \frac{1}{2}y\cdot\frac{1}{3}n$.
    By Theorem \ref{thm: ex bip Ktss}, we have
    $$x\leq \delta(s,t)\epsilon^{-\frac{1}{2}}(1+3\epsilon)(1/3+\epsilon)^{2-\frac{1}{s^2}}n^{2-\frac{1}{s^2}}.$$
    And 
    $$y\leq 6\delta(s,t)\epsilon^{-\frac{1}{2}}(1/3+\epsilon)^{3-\frac{1}{s^2}}n^{2-\frac{1}{s^2}}.$$
    We delete all the vertices in $\{a_1,\dots,a_{\epsilon n}\}\cup \{b_1,\dots,b_{\epsilon n}\}$, the remaining hypergraph (denoted by $\cG'$) is still $K_{t,s,s}$-free,  with each part $\frac{1}{3}n$ vertices. The size of $\cG'$ is bounded by $\ex^{bip}_3(\frac{1}{3}n,\frac{1}{3}n,K_{t,s,s})$.

    Thus, we have
    \begin{align*}
        &\ex^{bip}_3((1/3+\epsilon)n,(1/3+\epsilon)n,K_{t,s,s})-\ex^{bip}_3(n/3,n/3,K_{t,s,s})\\
        &\leq e(\cG)-e(\cG')
        \leq\epsilon n\cdot (x+y)\\
        &\leq \delta(s,t)\epsilon^{\frac{1}{2}}(1+3\epsilon)(1/3+\epsilon)^{2-\frac{1}{s^2}}n^{3-\frac{1}{s^2}}+6\delta(s,t)\epsilon^{\frac{1}{2}}(1/3+\epsilon)^{3-\frac{1}{s^2}}n^{3-\frac{1}{s^2}}\\
        &\leq 10\epsilon^{\frac{1}{2}}\delta(s,t)n^{3-\frac{1}{s^2}}.
    \end{align*}
    The last inequality holds when $\epsilon<\frac{1}{3}$. We complete the proof by setting the constant $\delta(s,t)$ as in Theorem \ref{thm: ex bip Ktss}.
\end{proof}

We can use Theorem \ref{thm: supersat} to obtain the following lemma.
\begin{lemma}\label{lem: find Km,m,m}
    Let $\cH$ be a $3$-uniform hypergraph with $n$ vertices.
    For every $\eta>0$ and integers $m_1,\ell_2\geq 1$, there exist $C=C(\eta,m_1,\ell_2)$ and $n_0=n_0(\eta,m_1,\ell_2)$ such that the following hold for $n>n_0$. Let $A=\{a_1,\dots, a_C\}$ such that for each $i\le C$ we have $|L(\cH(a_i))[V(\cH)-A]|\geq \eta n^2$. Then there exist two disjoint sets $W_1, W_2\subseteq V(\cH)$ of size $m_1$ and a set $A'\subseteq A$ of size $\ell_2$, such that for every $w_1\in W_1$, $w_2\in W_2$ and $a\in A'$, $w_1w_2a\in E(\cH)$.
\end{lemma}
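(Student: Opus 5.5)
We will find the configuration by applying supersaturation for $K_{m_1,m_1}$ to each link graph and then pigeonholing over the possible host sets. For each $i\le C$, let $G_i=L(\cH(a_i))[V(\cH)-A]$. This is a graph on at most $n$ vertices with at least $\eta n^2$ edges. Since $\eta n^2\ge C(m_1)\, n^{2-1/m_1}$ for $n$ large, Theorem~\ref{thm: supersat} applies with $s=m_1$ (if $m_1=1$, a single edge suffices, or we may simply take $s=\max(m_1,2)$ and pass to sub-bicliques). It shows that $G_i$ contains at least
\[
c(m_1)\left(\frac{\eta n^2}{\binom n2}\right)^{m_1^2} n^{2m_1}\ \ge\ c(m_1)(2\eta)^{m_1^2}\, n^{2m_1}
\]
copies of $K_{m_1,m_1}$. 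Set $\gamma=c(m_1)(2\eta)^{m_1^2}$.

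Each copy of $K_{m_1,m_1}$ is determined by its ordered pair of parts $(W_1,W_2)$, which are disjoint $m_1$-subsets of $V(\cH)-A$. The number of such pairs is at most $n^{2m_1}$. We now double count the incidences between indices $i$ and pairs $(W_1,W_2)$ such that $G_i$ contains the complete bipartite graph between $W_1$ and $W_2$. There are at least $C\gamma n^{2m_1}$ such incidences. Taking $C=C(\eta,m_1,\ell_2)=\lceil \ell_2/\gamma\rceil+1$, pigeonhole gives a single pair $(W_1,W_2)$ shared by at least $C\gamma>\ell_2-1$, hence at least $\ell_2$, of the graphs $G_i$.

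Let $A'$ consist of $\ell_2$ of the corresponding vertices $a_i$. For every $w_1\in W_1$, $w_2\in W_2$ and $a\in A'$, the pair $w_1w_2$ is an edge of $L(\cH(a))$, so $w_1w_2a\in E(\cH)$. The sets $W_1,W_2$ avoid $A$, so the three sets are disjoint as required.

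The only step needing care is the supersaturation input. Theorem~\ref{thm: supersat} requires $e(G)\ge C(s)n^{2-1/s}$, which holds because $\eta$ is fixed and $n>n_0$. The resulting count is of order $n^{2m_1}$, matching the number of candidate host pairs up to a constant, which is exactly what allows a bounded $C$ to suffice. In contrast to the proof of Theorem~\ref{thm: ex bip Ktss}, no convexity argument is needed here, since every $a_i$ individually has a dense link.
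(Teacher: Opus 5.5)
Your proposal is correct and follows the paper's proof: apply Theorem~\ref{thm: supersat} to each link graph restricted to $V(\cH)-A$ to get $\Omega(n^{2m_1})$ copies of $K_{m_1,m_1}$. Then pigeonhole over the $O(n^{2m_1})$ possible host pairs to find one copy shared by $\ell_2$ of the $a_i$. Your separate handling of $m_1=1$ adds a little care the paper omits, since the supersaturation theorem as stated needs $s\ge 2$.
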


\begin{proof}
    For every $i\le C$, 
    we can find $c'\cdot n^{2m_1}$ copies of $K_{m_1,m_1}$ in $L(\cH(a_i))$ inside $[V(\cH)-A]$ for some constant $c'=c(\eta,m_1)$ by applying Theorem \ref{thm: supersat}. Our goal is to find a copy of $K_{m_1,m_1}$ inside $[V(\cH)-A]$ that belongs to the link graph of at least $\ell_2$ vertices of $A$. Observe that the number of copies of $K_{m_1,m_1}$ is at most $c''n^{2m_1}$ for some constant $c''=c''(m_1)$. Therefore, if $C>\ell_2 c''/c'$, then by the pigeonhole principle, we find the desired copy of $K_{m_1,m_1}$.
\end{proof}
We write $\cF=\cF_5^S(m,t,K_{s,s})$ in the rest of this section. Then we begin the proof of Theorem \ref{thm: cF_5^S(m,B)}. 
We restate Theorem\ref{thm: cF_5^S(m,B)}
here for convenience.
\bigskip

\noindent\textbf{Theorem.}
{\it
When $n$ is large enough, for positive integers $m,t$ and $s$, with $m\geq 2s\geq 4$, 
we have
    $$\ex_3(n,\cF_5^S(m,t,K_{s,s}))\leq e(T_3(n,3))+6\cdot\ex^{bip}_3(n/3,n/3,K_{t,s,s})+3\cdot\ex_3(n/3,K_{t,s,s})+o(n^{3-\frac{1}{s^2}}),$$
    and 
    $$\ex_3(n,\cF_5^S(m,t,K_{s,s}))\geq e(T_3(n,3))+3\cdot\ex_3(\lfloor n/3\rfloor,K_{t,s,s}).$$
    Moreover, when $t=1$,
    $$\ex_3(n,\cF_5^S(m,1,K_{s,s}))\leq e(T_3(n,3))+6\cdot\ex^{bip}_3(n/3,n/3,K_{1,s,s})+3\cdot\ex_3(n/3,K_{1,s,s})+o(n^{3-\frac{1}{s}}).$$
}

For all $t\geq 1$,
the lower bound is given by embedding into each part of $T_3(n,3)$ a $K_{t,s,s}$-free $3$-graph, with the maximum number of hyperedges, the resulting hypergraph is denoted by $\cH$, and the parts of $T_3(n,3)$ will be called the parts of $\cH$. Suppose that $\cH$ contains a copy of $\cF_5^S(m,t,K_{s,s})$. Recall that $m\geq 2s$ and $\cF_5^S(m,t,K_{s,s})$ contains a copy of $T_3(3m,3)$. In this copy of $T_3(3m,3)$, the vertices in $T$ and $K_{s,s}$ cannot be embedded into the same part of $\cH$, because each part is $K_{t,s,s}$-free.
Since there are no hyperedges containing two vertices in a part and one vertex in another, there are two vertices in $\cF_5^S(m,t,K_{s,s})$ in the $K_{s,s}$ part that are embedded into different parts of $\cH$, denote them by $s_1,s_2$.
Then, since $m\ge 2$, there exist two other vertices $u_1,u_2$ such that both $\{s_1,u_1,u_2\}$ and $\{s_2,u_1,u_2\}$ form a hyperedge of $\cH$.
But there are no such two vertices in $\cH$, a contradiction.

Let us turn to the proof of the upper bound.
Let $\cG$ be a maximum $\cF$-free 3-uniform hypergraph on $n$ vertices. 
Let $A,B,C,\delta,\epsilon$ and $\cK=\cK(\cG)$, $\cM=\cM(\cG)$, $\cE=\cE(\cG)$ be defined as in Theorem \ref{thm: blow-up sta}. We set $\delta=\frac{1}{100m}$ for the rest of this section. We define $A',B',C'$ similarly as (\ref{eq: def of A'}) in Section \ref{sec: notation}.
Moreover, we suppose $\cG$ has the maximum number of good hyperedges under the partition $A,B,C$.
We let $A_L'$ be the set of vertices in $A'$ with large degree, which is defined as follows:
$$A_L'=\{v\in A':d_{\cG}(v)\geq |B|\cdot|C|-\delta n^2\}.$$
Note that vertices of $A'$ are in less than $|B|\cdot|C|-2\delta n^2$ good hyperedges, thus vertices of $A_L'$ are in more than $\delta n^2$ hyperedges that are not good.

We can similarly define $B_L'$ and $C_L'$. Let $X_L=A_L'\cup B_L'\cup C_L'$ and recall that $X=A'\cup B'\cup C'$. We will prove the following result.
\begin{lemma}\label{lem: const bound XL}
    There is a constant $N=N(\delta,m,s)$ such that $|X_L|\leq N$.
\end{lemma}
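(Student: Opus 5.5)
We argue by contradiction. Suppose $|X_L|>N$ for a large constant $N$ to be fixed later. By pigeonhole, one of $A_L',B_L',C_L'$ contains at least $N/3$ vertices; by symmetry let it be $A_L'$, and fix a set $A_0\subseteq A_L'$ of exactly $C_0:=\lceil N/3\rceil$ vertices. The aim is to use Lemma~\ref{lem: find Km,m,m} to build a copy of $\cF=\cF_5^S(m,t,K_{s,s})$ in which $A_0$ supplies the set $T$ (or part of the $T_3(3m,3)$), giving the contradiction.

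\textbf{Step 1 (dense non-good link inside one part).} Each $v\in A_0$ lies in more than $\delta n^2$ non-good hyperedges, since $d_\cG(v)\ge |B||C|-\delta n^2$ while $d_{\cG_g}(v)<|B||C|-2\delta n^2$. The non-good hyperedges through $v$ have their other pair inside $A$, inside $B$, inside $C$, or split between $A$ and one of $B,C$. There are only five such types, so for each $v$ some type carries at least $\delta n^2/5$ edges of $L(\cG(v))$. Passing to a subset of $A_0$ of size $C_0/5$, we may assume all vertices use the same type. Removing pairs meeting $A_0$ or $X$ costs only $O(C_0 n+\delta^2n^2)$ link edges, which is at most $\delta n^2/10$ for our choice $\delta=1/(100m)$ and $\epsilon$ small. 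In the main case the pair lies inside a single part $P\in\{A,B,C\}$.

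\textbf{Step 2 (apply Lemma~\ref{lem: find Km,m,m}).} Apply Lemma~\ref{lem: find Km,m,m} with $\eta=\delta/20$, $m_1=s$ and $\ell_2=t$. Choosing $N$ large enough that $C_0/5\ge C(\eta,s,t)$, we obtain $t$ vertices $a_1,\dots,a_t\in A_0$ and disjoint $s$-sets $S_0,S_1\subseteq P\setminus X$ such that $a_iw_0w_1\in\cG$ for all $w_0\in S_0$ and $w_1\in S_1$. We then want $S_0\cup S_1$ to lie in the $P$-part of a copy of $T_3(3m,3)$ in $\cG$ that avoids $\{a_1,\dots,a_t\}$. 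This is where the non-exceptional vertices are used. The $2s\le m$ vertices of $S_0\cup S_1$ lie outside $X$, and the pairs among them and the chosen vertices are proper (there are few improper pairs, so we may choose vertices to avoid them). Hence each has at most $2\delta n^2$ missing good hyperedges. Greedily choosing the remaining $m-2s$ vertices of $P$ and $m$ vertices in each of the other two parts, all outside $X$ and with only proper pairs, a counting argument (missing edges total $O(\delta n^2)$ per vertex, against $\Theta(n^2)$ choices) completes a $T_3(3m,3)$. Together with $T=\{a_1,\dots,a_t\}$ this gives a copy of $\cF$, a contradiction.

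\textbf{Step 3 (mixed types).} Suppose instead that the dense link type is split, with pairs $xy$, $x\in A$ and $y\in B$, say. Then the edges $vxy$ are not good, and an embedding of $\cF_5^S(m)$-like structures (use $v,x$ in the role of the two part-vertices and find a tripartite host) should again give $\cF$. Alternatively, moving $v$ to another part would increase the number of good hyperedges, contradicting the choice of the partition. I expect this case, together with guaranteeing that the greedy $T_3(3m,3)$ extension actually exists, to be the main technical obstacle. I would try first to handle it by the maximality of good hyperedges: a vertex $v\in A_L'$ with many $A$–$B$ link pairs has more than $|A||C|$-type good degree potential if reassigned, because $v$ misses $2\delta n^2$ good edges in $A$. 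The mixed case should be reducible to the single-part case, or ruled out, by this argument.
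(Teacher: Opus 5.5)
Your Steps~1--2 (the case where the dense non-good link of the exceptional vertices lies inside a single part $P$) are essentially the paper's Claim~\ref{claim: bound g3}. Lemma~\ref{lem: find Km,m,m} gives $t$ exceptional vertices with a common $K_{s,s}$ in $P\setminus X$. The $T_3(3m,3)$ is then completed by fixing $m$ vertices of $P\setminus X$ and finding a $K_{m,m}$ (Theorem~\ref{thm: ex of Kss}) in their common link in the other two parts, which misses at most $2m\delta n^2$ pairs. Your remarks about choosing proper pairs are not needed, and they cannot apply to $S_0\cup S_1$, which are not chosen freely.

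The genuine gap is Step~3. The mixed case, where $v\in A_L'$ has many link edges $xy$ with $x\in A\setminus A'$ and $y\in B\setminus B'$, is half of the lemma (the sets $\cG_4^B(v),\cG_4^C(v)$ in the paper), and your proposed way out fails. Maximality of the number of good hyperedges only gives $d_{\cG_g}(v)\ge|\cG_4^B(v)|$, by comparison with moving $v$ to $C$. That is perfectly consistent with this case: take $d_{\cG_g}(v)=|B||C|-3\delta n^2$ and $|\cG_4^B(v)|=2\delta n^2$. Then $v\in A_L'$, and no reassignment of $v$ increases the number of good hyperedges, so nothing is ruled out. Nor can $v$ play the role of a vertex of $T$ here. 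Its $K_{s,s}$ straddles $A$ and $B$, and a typical vertex of $A$ and a typical vertex of $B$ cannot be put in a common part of a $T_3(3m,3)$ without $\Omega(1)$ extra hyperedges per pair, which you do not have.

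The missing idea is to reverse the roles of the vertices. Apply Lemma~\ref{lem: find Km,m,m} with $\ell_2$ large to get $W_1\subseteq A\setminus A'$, $W_2\subseteq B\setminus B'$ and many exceptional vertices $D\subseteq A_L'$ with $a w_1w_2\in\cG$ for all choices. Then $t$ vertices of $W_2$ serve as $T$. The $K_{s,s}$ in their common link is between $s$ vertices of $D$ and $s$ vertices of $W_1$, all placed in the $A$-part of the $T_3(3m,3)$.

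This requires the exceptional vertices to have many good hyperedges, and that is where maximality is really used. First discard, by a \emph{threshold}, the boundedly many vertices with $|\cG_3(v)|\ge\delta n^2/10$; these are handled by your single-part case. The remaining vertices satisfy $|\cG_1(v)|+|\cG_4^B(v)|+|\cG_4^C(v)|\ge|B||C|-3\delta n^2$. Together with $d_{\cG_g}(v)\ge\max\{|\cG_4^B(v)|,|\cG_4^C(v)|\}$ this gives $d_{\cG_g}(v)\ge\frac13(|B||C|-3\delta n^2)$. That beats the $2m\delta n^2$ pairs of $B\times C$ lying outside the common link of $m$ vertices of $W_1$.

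A second application of Lemma~\ref{lem: find Km,m,m}, to the good links of $D$ restricted to that common link, yields $m$ vertices of $D$ and sets $W_B\subseteq B$, $W_C\subseteq C$ spanning the required $T_3(3m,3)$. Your dominant-type pigeonhole in Step~1 only gives $d_{\cG_g}(v)\ge\delta n^2/5$, which is too small for this step.
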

\begin{proof}
    For a vertex $v\in A_L'$, we give the following types of the hyperedges in $\cG(v)$:
    $$\cG_1(v)=\cG_g(v),\, \cG_2(v)=\{E\in \cG(v):|E\cap X|\geq 2\},$$
    $$\cG_3(v)=\{E\in \cG(v):|E\cap (B\setminus B')|=2 \text{~or~} |E\cap (C\setminus C')|=2\text{~or~} |E\cap (A\setminus A')|=2\},$$
    $$\cG_4^B(v)=\{E\in \cG(v):|E\cap (B\setminus B')|=1 \text{~and~} |E\cap (A\setminus A')|=1\},$$
    $$\cG_4^C(v)=\{E\in \cG(v):|E\cap (C\setminus C')|=1 \text{~and~} |E\cap (A\setminus A')|=1\}.$$

    We have $\cG(v)=\cG_1(v)\cup \cG_2(v)\cup \cG_3(v)\cup \cG_4^B(v)\cup \cG_4^C(v)$. By the definition of $A'$ and $A_L'$, we have
    \begin{equation}\label{eq: sum of g2g3g4}
        |\cG_2(v)|+|\cG_3(v)|+|\cG_4^B(v)|+|\cG_4^C(v)|\geq \delta n^2.
    \end{equation}
    We can easily bound the size of $\cG_2(v)$, with $|\cG_2(v)|\leq |X|n\leq \frac{3}{2}\delta^2 n^2$ by (\ref{eq: size of X}). 
    
\begin{claim}\label{claim: bound g3}
    There is a constant $N_1=N_1(m,s)$ such that the number of vertices $v\in A_L'$ satisfying
    $$|\cG_3(v)|\geq \frac{1}{10}\delta n^2$$
    is at most $N_1$.
\end{claim}
\begin{proof}
We define $\cG_3^B(v)=\{E\in \cG_3(v):|E\cap (B\setminus B')|=2\}$, and we define $\cG_3^C(v)$ and $\cG_3^A(v)$ similarly. We have $|\cG_3(v)|=|\cG_3^B(v)|+|\cG_3^C(v)|+|\cG_3^A(v)|$. Thus, it is enough to prove that there is a constant $N_2=N_2(m,s)$ such that the set
$$D_1^B=\{v\in A_L':|\cG_3^B(v)|\geq \frac{1}{30}\delta n^2\}$$ has size at most $N_2$.

The size of $B'$ is at most $\frac{1}{2}\delta^2n$, thus, for every $v\in D_1^B$, the number of hyperedges in $\cG_3^B(v)$ that avoid $B'$ is at least $\frac{1}{30}\delta n^2-\frac{1}{2}\delta^2n^2\geq \frac{1}{60}\delta n^2$ when $\delta=\frac{1}{100m}$. 

When $n$ is large enough and $|D_1^B|\geq C(\frac{1}{60}\delta,s,t)$, then by Lemma \ref{lem: find Km,m,m} there exists a set of $t$ vertices (denoted by $D_1^{B,1}$) in $D_1^B$ such that for all the vertices $v\in D_1^{B,1}$, the link graph $L(\cG_3(v))$ is a fixed copy of $K_{s,s}$ in $B\setminus B'$. Denote the vertices of the $K_{s,s}$ as $\{b_1,\dots,b_{2s}\}$. We choose $m-2s$ vertices from $B\setminus B'$ distinct from $\{b_1,\dots,b_{2s}\}$ and denote them as $\{b_{2s+1},\dots,b_{m}\}$.

According to the definition of $B'$, for each $b_i$ $(1\leq i\leq m)$, there are at most $2\delta n^2$ pairs $(a,c)\in A\times C$ such that $a b_i c\notin E(\cG)$. It implies that at least $|A||C|-3m\delta n^2$ pairs $(a,c)\in (A\setminus(D_1^{B,1}))\times C$ satisfy that $a b_i c\in E(\cG)$ for all $1\leq i\leq m$.
We construct an auxiliary bipartite graph $H$ with parts $A$ and $C$, where $a\in (A\setminus D_1^{B,1})$ and $c\in C$ are connected if and only if $a b_i c\in E(\cG)$ for all $1\leq i\leq m$.
Then, we have $e(H)\geq |A||C|-3m\delta n^2\geq (\frac{n}{3})^2-\frac{1}{20}n^2$ when $\delta=\frac{1}{100m}.$
By Theorem \ref{thm: ex of Kss}, there exists a $K_{m,m}$ in $H$, which implies that there are $m$ vertices $a_1,a_2,\dots,a_m$ in $A\setminus D_1^{B,1}$ and $m$ vertices $c_1,c_2,\dots,c_m$ in $C$ such that $a_i b_j c_k\in E(\cG)$ for all $1\leq i,j,k\leq m$. Together with $D_1^{B,1}$, this forms a copy of $\cF$ in $\cG$, a contradiction (see Figure \ref{fig:D1B1 forms FS}). 
\begin{figure}
    \centering
\includegraphics[width=0.6\linewidth]{ 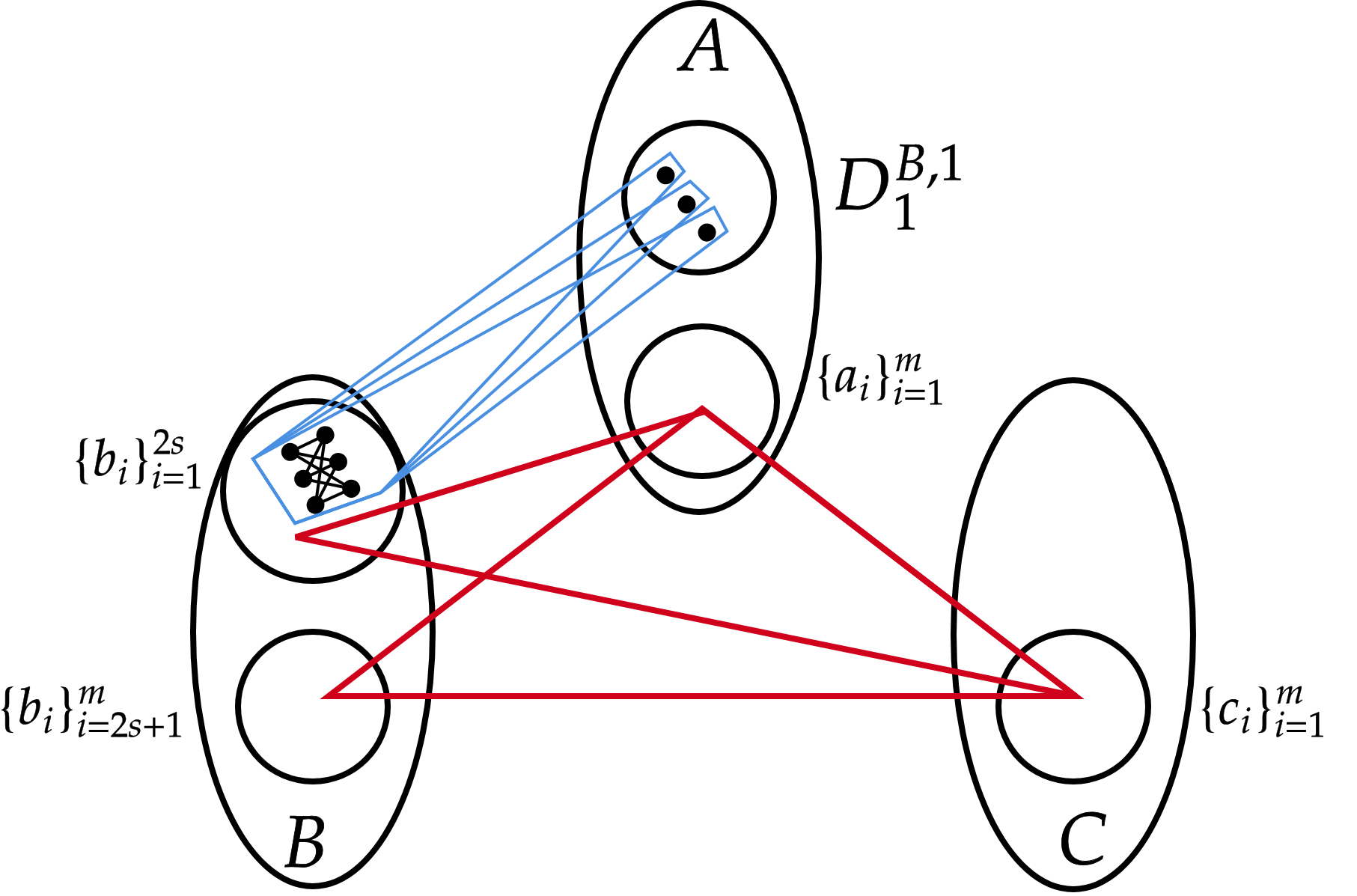}
    \caption{The vertices $\{a_i\}_{i=1}^{m}\cup \{b_i\}_{i=1}^{m}\cup\{c_i\}_{i=1}^{m}$ and $D_{1}^{B,1}$ form a copy of $\cF$}
    \label{fig:D1B1 forms FS}
\end{figure}

Then, by choosing $N_2=C\left(\frac{1}{60}\delta,s,t\right)$, the claim holds.
\end{proof}
Let us delete the at most $N_1$ vertices described in Claim \ref{claim: bound g3} from $A_L'$ and let $R_1$ denote the resulting set, i.e. the set of vertices $v\in A_L'$ with $|\cG_3(v)|< \frac{1}{10}\delta n^2$.
According to (\ref{eq: sum of g2g3g4}) and the bounds on $|\cG_2(v)|$, for every $v\in R_1$, we have
$$|\cG_4^B(v)|+|\cG_4^C(v)|\geq\delta n^2 -|\cG_2(v)|-|\cG_3(v)|\geq  \delta n^2 - \frac{3}{2}\delta^2 n^2 - \frac{1}{10}\delta n^2\geq \frac{\delta}{2} n^2,$$
when $n$ is sufficiently large and $\delta=\frac{1}{100m}$. Without loss of generality, it is sufficient to prove that there exists a constant $N_3=N_3(\delta,m)$ such that there are at most $N_3$ vertices $v\in R_1$ satisfying that
$$e\left(L\left(\cG_4^B(v)\right)\right)=|\cG_4^B(v)|\geq \frac{\delta}{4} n^2.$$
Let us denote the set of such vertices (the vertices $v\in R_1$ with $|\cG_4^B(v)|\geq \frac{\delta}{4}n^2$) in $R_1$ as $D_2^B$.
\begin{claim}
    There is a constant $N_3=N_3(\delta,m)$ such that $|D_2^B|\leq N_3$.
\end{claim}
\begin{proof}
According to Lemma \ref{lem: find Km,m,m}, when $|D_2^B|\geq N_3'(\delta,m+t,\ell_2)$, where $N_3'(\delta,m+t,\ell_2)$ is the constant defined in Lemma \ref{lem: find Km,m,m} and $\ell_2=\ell_2(m)$ is a constant we will determine later, there exist two disjoint sets $W_1\subseteq A\setminus A'$, $W_2\subseteq B\setminus B'$ both of size $m+t$, and a set $D_2^{B,1}\subseteq D_2^B$, of size $\ell_2$, such that for every $w_1\in W_1$, $w_2\in W_2$, $w_1 w_2 a\in E(\cG)$ for every $a\in D_2^{B,1}$.
Since $m\geq 2s$,
we choose a set $T\subseteq W_2$ with $t$ vertices such that the link graph of every $w_2\in T$, $L\left(\cG[\{w_2\}\cup W_1\cup D_2^{B,1}](w_2)\right)$, contains a copy of $K_{m,\ell_2}$, denoted by $K'$.

Note that we assume $\cG$ has the maximum number of good hyperedges  among all the maximum $\cF$-free 3-uniform hypergraphs on $n$ vertices such that each hyperedge contains one vertex from each of $A, B$ and $C$. 
 Then for every $v\in D_2^{B,1}$, $d_{\cG_g}(v)=|\cG_1(v)|\geq |\cG_4^B({v})|$ and $d_{\cG_g}(v)\geq |\cG_4^C(v)|$. 

Note that $$|\cG_1(v)|+|\cG_4^B({v})|+|\cG_4^C(v)|\geq |B|\cdot |C|-2\delta n^2-|\cG_3(v)|-|\cG_2(v)|\geq |B|\cdot |C|-3\delta n^2,$$
which implies that $|\cG_1(v)|\geq \frac{1}{3}(|B|\cdot |C|-3\delta n^2)$.

According to the definition of $A'$, for each $u\in  W_1$, there are at most $2\delta n^2$ pairs $(b,c)\in B\times C$ such that $u b c\not\in E(\cG)$. It implies that at least $|B||C|-2m\delta n^2$ pairs $(b,c)\in B\times C$ satisfy that $u b c\in E(\cG)$ for all $u\in  W_1$. 
We construct an auxiliary bipartite graph $H''$ with parts $B$ and $C$, where $b\in B$ and $c\in C$ are connected if and only if $u b c\in E(\cG)$ for all $u\in  W_1$.
Then we construct an auxiliary $3$-uniform hypergraph $\cH''(D_2^{B,1})$ with vertex set $D_2^{B,1}\cup B\cup C$, where $a\in D_2^{B,1}$, $b\in B$ and $c\in C$ form a hyperedge if and only if $b c\in E(H'')$.

Notice that for every $a\in D_2^{B,1}$, $$d_{\cH''(D_2^{B,1})}(a)\geq \cG_1(a)-2m\delta n^2\geq \frac{1}{3}(|B|\cdot |C|-3\delta n^2)-2m\delta n^2\geq \frac{1}{4}(\frac{n}{3})^2.$$
Again, there is a constant $N_3''=N_3''(1/36,m,m)$ such that when $\ell_2\geq N_3''$, by Lemma \ref{lem: find Km,m,m}, there exist two disjoint sets $W_B\subseteq B$, $W_C\subseteq C$ both of size $m$, and a set $D_2^{B,2}\subseteq D_2^{B,1}$ of size $m$, such that for every $w_b\in W_B$, $w_c\in W_C$, $a w_b w_c\in E(\cH''(D_2^{B,1}))$ for every $a\in D_2^{B,2}$, and $w_b w_c\in E(H'')$.
Here, $w_b w_c\in E(H'')$ implies that $u w_b w_c\in E(\cG)$ for every $u\in W_1$.

Now, for the fixed set $T$, 
there is a copy of $K_{s,s}$ contained in $A\cap L(\cG[\{w_2\}\cup W_1\cup D_2^{B,2}](w_2))$ for every $w_2\in T$. Then there exists a copy of $\cF$ in $\cG$ contained in $T\cup W_1\cup D_2^{B,2}\cup W_B\cup W_C$, a contradiction (see Figure \ref{fig:D1B2 contains FS}).

\begin{figure}
    \centering
\includegraphics[width=0.6\linewidth]{ 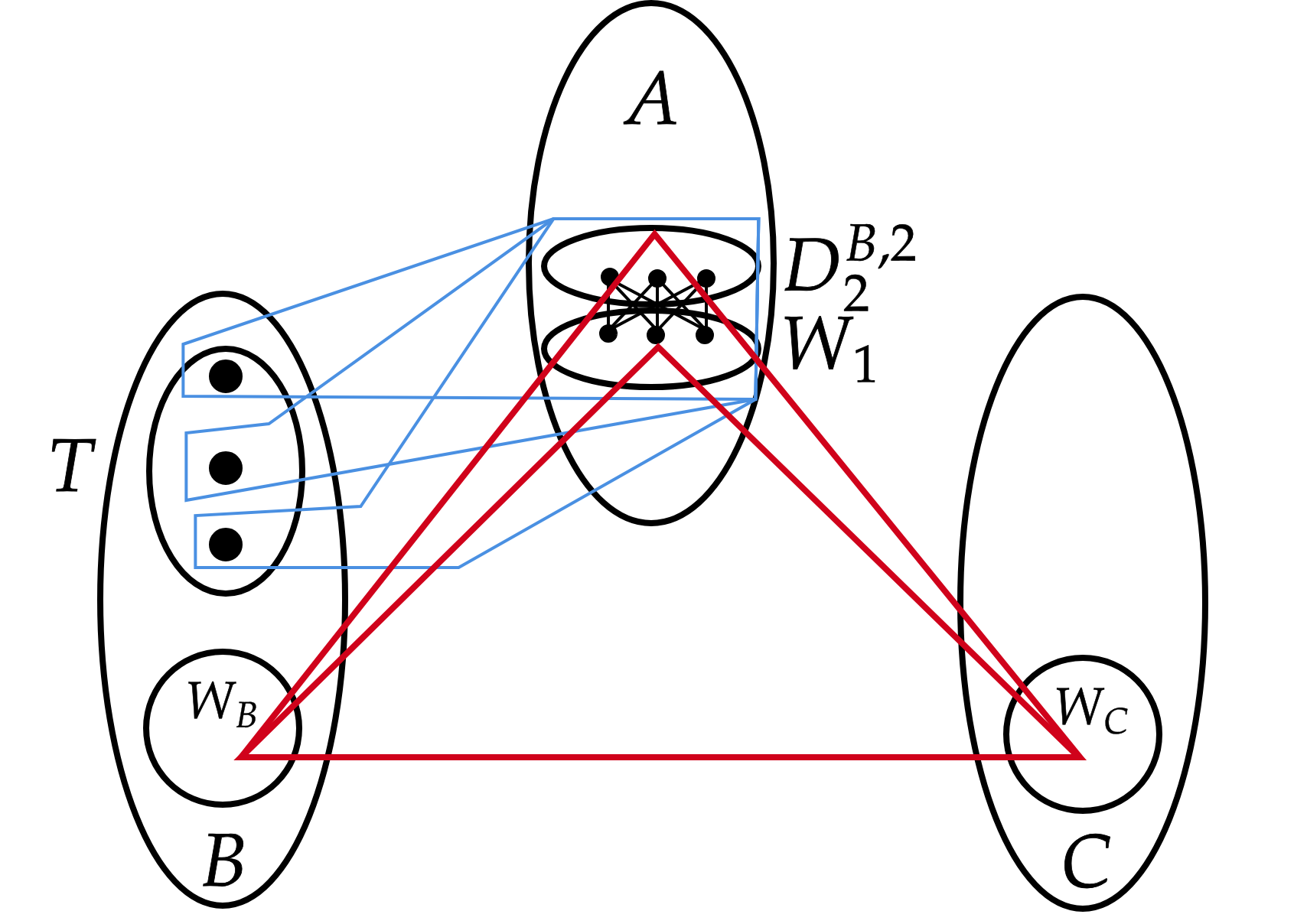}
    \caption{There exist $s$ vertices in $D_{2}^{B,2}$ and $s$ vertices in $W_1$, together with $T\cup W_B\cup W_C$, they form a copy of $\cF$.}
    \label{fig:D1B2 contains FS}
\end{figure}
The claim holds by choosing $N_3=N_3'(\delta, m+t, l_2)$. 
\end{proof}
By symmetry, we also have the same bound for the number of vertices in $R_1$ with $|\cG_4^C(v)|\ge \frac{\delta}{4}n^2$, thus there exists a constant $N_4=N_4(\delta,m)$ such that $|A'_L| \leq N_4$ and similarly $|B'_L| \leq N_4$, $|C'_L| \leq N_4$.
Thus, we can choose $N=3N_4$ such that the lemma holds.
\end{proof}

In the following, we set 
$$h(n)= e(T_3(n,3))+6\cdot\ex^{bip}_3(n/3,n/3,K_{t,s,s})+3\ex_3(n/3,K_{t,s,s})+o(n^{3-\frac{1}{s^2}}).$$
For every $v\in  A'\setminus A_L'$, 
we have $d_{\cG}(v)<|B|\cdot|C|-\delta n^2< \left(\frac{1}{9}+\epsilon-\delta\right)n^2.$ 
By Lemma \ref{lem: const bound XL}, the number of hyperedges in $\cG$ containing at least one vertex in $X$ is at most
$$|X\setminus X_L|\cdot \left(\left(\frac{1}{9}+\epsilon-\delta\right)n^2\right)+|X_L|n^2=|X|\cdot\left(\left(\frac{1}{9}+\epsilon-\delta\right)n^2\right)+O(n^2).$$
Then we focus on the hyperedges in $\cG$ not containing any vertex in $X$. Let $\cG'=\cG[V(\cG)\setminus X]$, and $|V(\cG')|=|V(\cG)|-|X|$.
\begin{claim}
   The hypergraph $\cG'[A\setminus A']$ (resp. $\cG'[B\setminus B']$, $\cG'[C\setminus C']$) is $K_{t,s,s}$-free.
\end{claim}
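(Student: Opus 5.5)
Suppose for contradiction that $\cG'[A\setminus A']$ contains a copy of $K_{t,s,s}$. Call its parts $T=\{x_1,\dots,x_t\}$, $S_0$ and $S_1$ with $|S_0|=|S_1|=s$, all lying in $A\setminus A'$. The plan is to extend this copy to a copy of $\cF=\cF_5^S(m,t,K_{s,s})$ in $\cG$, which contradicts the choice of $\cG$.

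The $T_3(3m,3)$ part of $\cF$ must contain $S_0\cup S_1$ inside one of its parts, and the set $T$ must be attached through the $K_{s,s}$ links. So we embed that part of $T_3(3m,3)$ into $A$. We take $S_0\cup S_1$ together with $m-2s$ further vertices $a_{2s+1},\dots,a_m\in A\setminus (A'\cup T\cup S_0\cup S_1)$; this is possible since $m\ge 2s$ and $|A\setminus A'|$ is linear in $n$. Call the resulting $m$-set $W_A=\{a_1,\dots,a_m\}$. We then need $m$ vertices in $B$ and $m$ vertices in $C$, disjoint from $T$, such that $a_ib_jc_k\in E(\cG)$ for all $i,j,k$. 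The vertices of $T$ play the role of the $t$ extra vertices, whose links are exactly the $K_{s,s}$ on $S_0,S_1$, and these hyperedges are already present in the copy of $K_{t,s,s}$.

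To find $b_j$ and $c_k$, we form the auxiliary bipartite graph $H$ on $B\times C$ in which $bc$ is an edge iff $a_ibc\in E(\cG)$ for every $a_i\in W_A$. Each $a_i\notin A'$ satisfies $d_{\cG_g}(a_i)\ge |B||C|-2\delta n^2$, so at most $2\delta n^2$ pairs are missing for each $a_i$. Hence
\[
e(H)\ge |B||C|-2m\delta n^2\ge \left(\tfrac{n}{3}-\epsilon n\right)^2-\tfrac{1}{50}n^2,
\]
using $\delta=\frac{1}{100m}$. This is a positive fraction of $n^2$, so for large $n$ Theorem~\ref{thm: ex of Kss} gives a $K_{m,m}$ in $H$. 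The only exclusion needed is that the chosen vertices avoid $T$, and $T\subseteq A$ is already disjoint from $B\cup C$.

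The resulting vertex set $W_A\cup W_B\cup W_C\cup T$ contains all hyperedges of $\cF$: the tripartite ones come from $K_{m,m}$ in $H$, and the $T$–$S_0$–$S_1$ ones come from the $K_{t,s,s}$. This gives a copy of $\cF$, a contradiction. The cases of $B\setminus B'$ and $C\setminus C'$ follow by symmetry.

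The only delicate point is checking that the counting bound on $e(H)$ survives the intersection over all $m$ vertices; it does, because $\delta$ was chosen as $\frac{1}{100m}$. No supersaturation is needed.
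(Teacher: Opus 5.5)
Your proof is correct and is essentially the paper's argument. The paper only says that this claim follows analogously to Claim~\ref{claim: bound g3}, whose proof is your construction: pad $S_0\cup S_1$ with $m-2s$ further vertices of $A\setminus A'$, form the auxiliary bipartite graph on $B\times C$ with at least $|B||C|-2m\delta n^2$ edges, extract a $K_{m,m}$ by Theorem~\ref{thm: ex of Kss}, and let the $t$-part play the role of $T$. The only difference is cosmetic: here the $t$-part lies in $A$, so it is automatically disjoint from the $K_{m,m}$ found in $B\times C$, whereas in Claim~\ref{claim: bound g3} the set $D_1^{B,1}$ had to be excluded explicitly.
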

\begin{proof}
    We only prove the statement for $\cG'[A\setminus A']$, the other two cases can be proved similarly.
    Suppose there is a $K_{t,s,s}$ in $\cG'[A\setminus A']$. That is, there exist $t$ vertices $u_1,\dots,u_t\in A\setminus A_L'$ and $2s$ vertices $a_1,a_2,\dots,a_{2s}$ in $A\setminus A'$ which forms a $K_{t,s,s}$. 
    Then, analogous to the proof of Claim \ref{claim: bound g3}, there exists a copy of $\cF$ in $\cG$, a contradiction.
\end{proof}
Thus, the number of hyperedges in $\cG'$ that are contained in one of the three parts is at most
$$f_1(A,B,C,X):=\ex_3(|A\setminus A'|,K_{t,s,s})+\ex_3(|B\setminus B'|,K_{t,s,s})+\ex_3(|C\setminus C'|,K_{t,s,s}).$$
\begin{claim}
    For every $t$-set $T\subseteq A\setminus A'$, the link graph $\bigcap_{v\in T}L(\cG'(v))[B\setminus B']$ is $K_{s,s}$-free.
\end{claim}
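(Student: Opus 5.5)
We argue by contradiction and build a copy of $\cF=\cF_5^S(m,t,K_{s,s})$ directly, in the same spirit as the proof of Claim~\ref{claim: bound g3}. Suppose some $t$-set $T=\{u_1,\dots,u_t\}\subseteq A\setminus A'$ has a common link containing a copy $K$ of $K_{s,s}$ with vertex set $\{b_1,\dots,b_{2s}\}\subseteq B\setminus B'$, split into parts $S_0=\{b_1,\dots,b_s\}$ and $S_1=\{b_{s+1},\dots,b_{2s}\}$. Since these hyperedges lie in $\cG'$, we have $u_ib_jb_k\in E(\cG)$ for all $i\le t$, $j\le s<k\le 2s$. Thus the vertices of $T$ are exactly the candidates for the $t$ extra vertices of $\cF$, and $S_0,S_1$ are the two $s$-sets lying in one part of the $T_3(3m,3)$. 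It then remains to complete $S_0\cup S_1$ to a full $T_3(3m,3)$ disjoint from $T$, with $S_0\cup S_1$ inside the ``$B$-part''.

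To do this, we choose $m-2s\ge 0$ further vertices $b_{2s+1},\dots,b_m\in B\setminus (B'\cup T)$; here $T\subseteq A$, so disjointness is automatic. By the definition of $B'$, each $b_i$ with $1\le i\le m$ misses at most $2\delta n^2$ pairs of $A\times C$. Hence at least $|A||C|-2m\delta n^2-tn$ pairs $(a,c)\in (A\setminus T)\times C$ satisfy $ab_ic\in E(\cG)$ for every $i\le m$. We form the auxiliary bipartite graph $H$ on $(A\setminus T)\cup C$ whose edges are exactly these pairs. With $\delta=\frac{1}{100m}$ and part sizes $\frac n3\pm\epsilon n$, we get $e(H)\ge (\frac n3)^2-\frac1{20}n^2=\Omega(n^2)$. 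By Theorem~\ref{thm: ex of Kss}, $H$ then contains a $K_{m,m}$ with parts $\{a_1,\dots,a_m\}\subseteq A\setminus T$ and $\{c_1,\dots,c_m\}\subseteq C$.

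Now $\{a_i\}\cup\{b_j\}\cup\{c_k\}$ spans a complete tripartite $T_3(3m,3)$ in $\cG$. Together with $T$ and the hyperedges $u\,b\,b'$ for $u\in T$, $b\in S_0$, $b'\in S_1$, this gives a copy of $\cF$ in $\cG$, contradicting that $\cG$ is $\cF$-free.

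The only point needing care is the counting in the middle step, which guarantees that $H$ is dense enough after removing $T$ and fixing all $m$ vertices $b_i$. This needs $2m\delta<\frac19-\frac1{20}$ minus lower-order terms, which holds for $\delta=\frac1{100m}$ and $\epsilon$ small. The remaining points are disjointness, which is immediate since $T\subseteq A$ and the $b_i$ lie in $B$, and the observation that the argument never uses that $T$ avoids $A'$ beyond $T\subseteq A$.
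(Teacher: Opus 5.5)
Your proof is correct and follows the paper's approach. The paper omits this proof and points to the argument of Claim~\ref{claim: bound g3}, which likewise extends the common $K_{s,s}$ in $B\setminus B'$ to $m$ vertices of $B\setminus B'$, bounds the missing pairs by $2m\delta n^2$ (plus the $O(n)$ pairs meeting $T$), and extracts a $K_{m,m}$ in the auxiliary bipartite graph on $(A\setminus T)\cup C$ via Theorem~\ref{thm: ex of Kss} to build a copy of $\cF$.
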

\begin{proof}
    The proof is similar as in the proof of Claim \ref{claim: bound g3}, we omit it here.
\end{proof}
By symmetric arguments, the number of hyperedges in $\cG'$ containing exactly two vertices in one of the three parts is at most
\begin{align*}  
    f_2(A,B,C,X):=& \ex^{bip}_3(|A\setminus A'|,|B\setminus B'|,K_{t,s,s})+\ex^{bip}_3(|A\setminus A'|,|C\setminus C'|,K_{t,s,s})\\
    + &\ex^{bip}_3(|B\setminus B'|,|A\setminus A'|,K_{t,s,s}) +\ex^{bip}_3(|B\setminus B'|,|C\setminus C'|,K_{t,s,s})\\
    +&\ex^{bip}_3(|C\setminus C'|,|A\setminus A'|,K_{t,s,s})+\ex^{bip}_3(|C\setminus C'|,|B\setminus B'|,K_{t,s,s}).
\end{align*}
The number of good hyperedges in $\cG'$ is at most
$$f_3(A,B,C,X):=|A\setminus A'|\cdot |B\setminus B'|\cdot |C\setminus C'|\leq e(T_3(n-|X|,3)).$$
As a result, the number of hyperedges in $\cG'$ is at most $$f_1(A,B,C,X)+f_2(A,B,C,X)+f_3(A,B,C,X).$$
The number of hyperedges in $\cG$ is at most
$$f_1(A,B,C,X)+f_2(A,B,C,X)+f_3(A,B,C,X)+|X|(\frac{1}{9}+\epsilon-\delta)n^2+O(n^2).$$
We set $$f(n,\epsilon)=e(T_3(n,3))+6\cdot\ex^{bip}_3((1/3+\epsilon)n,(1/3+\epsilon)n,K_{t,s,s})+3\ex_3((1/3+\epsilon)n,K_{t,s,s}).$$\label{eq: f(n,epsilon)}
\begin{claim}
    The size of $\cG$ is at most $f(n,\epsilon)+O(n^2)$.
\end{claim}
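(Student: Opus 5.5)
The plan is to start from the bound already established just before the claim:
\[
e(\cG)\le f_1(A,B,C,X)+f_2(A,B,C,X)+f_3(A,B,C,X)+|X|\Bigl(\tfrac19+\epsilon-\delta\Bigr)n^2+O(n^2),
\]
and to compare each term with the corresponding term of $f(n,\epsilon)$.

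First, the sizes. By Theorem~\ref{thm: blow-up sta}, each of $|A\setminus A'|,|B\setminus B'|,|C\setminus C'|$ is at most $|A|,|B|,|C|\le (1/3+\epsilon)n$. Both $\ex_3(\cdot,K_{t,s,s})$ and $\ex^{bip}_3(\cdot,\cdot,K_{t,s,s})$ are monotone nondecreasing in their arguments, since we may add isolated vertices. Hence
\[
f_1(A,B,C,X)\le 3\ex_3\bigl((1/3+\epsilon)n,K_{t,s,s}\bigr)
\]
and
\[
f_2(A,B,C,X)\le 6\ex^{bip}_3\bigl((1/3+\epsilon)n,(1/3+\epsilon)n,K_{t,s,s}\bigr).
\]
These are exactly the second and third terms of $f(n,\epsilon)$.

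It remains to show
\[
f_3+|X|\Bigl(\tfrac19+\epsilon-\delta\Bigr)n^2\le e(T_3(n,3))+O(n^2).
\]
We have $f_3\le e(T_3(n-|X|,3))$ by the AM--GM bound on a product of three numbers summing to $n-|X|$. Deleting one vertex from a balanced tripartite hypergraph on $k$ vertices removes about $k^2/9$ edges. So iterating gives
\[
e(T_3(n,3))-e(T_3(n-|X|,3))\ge \sum_{k=n-|X|+1}^{n}\Bigl(\Bigl\lfloor\tfrac{k}{3}\Bigr\rfloor\Bigr)^2\ge |X|\Bigl(\tfrac{(n-|X|)^2}{9}-O(n)\Bigr).
\]
Since $|X|\le \frac32\delta^2 n$ by (\ref{eq: size of X}), the right-hand side is at least $|X|\bigl(\frac19-\frac13\delta^2\bigr)n^2-O(n^2)$.

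Because $\epsilon=\delta^3$ is tiny compared to $\delta$, we have $\epsilon-\delta+\frac13\delta^2<0$. Therefore
\[
f_3+|X|\Bigl(\tfrac19+\epsilon-\delta\Bigr)n^2\le e(T_3(n,3))-|X|\Bigl(\delta-\epsilon-\tfrac13\delta^2\Bigr)n^2+O(n^2)\le e(T_3(n,3))+O(n^2).
\]
Here the $O(n^2)$ absorbs the rounding errors, and the contribution of $X_L$ is already $O(n^2)$ by Lemma~\ref{lem: const bound XL}. Combining the three estimates gives $e(\cG)\le f(n,\epsilon)+O(n^2)$.

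The main obstacle is the careful accounting in the last step. The trade-off between the deficiency $\delta n^2$ per vertex of $X$ and the loss in the product $f_3$ must come out with the right sign, and one must check that the lower-order rounding in the degree of a deleted vertex of a Turán hypergraph costs only $O(n)$ per vertex. Since $|X|=O(n)$, these roundings total $O(n^2)$. If the sign condition were delicate, I would instead bound $f_3$ directly by $(n-|X|)^3/27$ and expand, using $(n-x)^3/27\le n^3/27-xn^2/9+x^2n/9$.
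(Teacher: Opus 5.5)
Your proposal is correct and follows the paper's own argument: you bound $f_1,f_2$ by monotonicity of $\ex_3(\cdot,K_{t,s,s})$ and $\ex^{bip}_3(\cdot,\cdot,K_{t,s,s})$, and you absorb $f_3+|X|(\frac19+\epsilon-\delta)n^2$ into $e(T_3(n,3))$. The one addition is that you actually verify this last inequality by the vertex-deletion count, using $\delta-\epsilon-\frac13\delta^2>0$; the paper only asserts it.
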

\begin{proof}
   First, we note that
   $$f_3(A,B,C,X)+|X|\left(\frac{1}{9}+\epsilon-\delta\right)n^2\leq e(T_3(n-|X|,3))+|X|\left(\frac{1}{9}+\epsilon-\delta\right)n^2\leq e(T_3(n,3)).$$
   By the monotonically increasing property of $\ex^{bip}_3(n,m,K_{s,s})$ and $\ex_3(n,K_{t,s,s})$, we have
    $$f_1(A,B,C,X)\leq 3\ex_3\left(\left(1/3+\epsilon\right)n,K_{t,s,s}\right),$$
    and
   $$f_2(A,B,C,X)\leq 6\cdot\ex^{bip}_3((1/3+\epsilon)n,(1/3+\epsilon)n,K_{t,s,s}).$$
   Thus, the claim holds.
\end{proof}

Then, it is sufficient to prove that
$$f(n,\epsilon)-h(n)=o(n^{3-\frac{1}{s^2}}),$$
which is equivalent to proving that there is a function $g(\epsilon)$ of $\epsilon$, such that for every $\epsilon>0$, for sufficiently large $n$,
\begin{equation}\label{eq: goal to prove}
    \frac{f(n,\epsilon)-h(n)}{n^{3-\frac{1}{s^2}}}\leq g(\epsilon),
\end{equation}
and $\lim_{\epsilon\to 0}g(\epsilon)=0$.
Notice that
\begin{equation}\label{eq: bip eq1}
\begin{aligned}
    &\ex^{bip}_3((1/3+\epsilon)n,(1/3+\epsilon)n,K_{t,s,s})-\ex^{bip}_3(n/3,n/3,K_{t,s,s})\leq 10\epsilon^{\frac{1}{2}}\delta(s,t)n^{3-\frac{1}{s^2}}
\end{aligned}
\end{equation}
by Lemma \ref{lem: inequality of ex3 bip}.
We have
\begin{equation}\label{eq: bip eq2}
\begin{aligned}
   & \ex_3((1/3+\epsilon)n,K_{t,s,s})-\ex_3(n/3,K_{t,s,s})\\
    \leq& \ex_3(\epsilon n,K_{t,s,s})+\ex^{bip}_3(\epsilon n,n/3,K_{t,s,s})+\ex^{bip}_3(n/3,\epsilon n,K_{t,s,s})\\
    \leq  &C_2(s,t)(\epsilon n)^{3-\frac{1}{s^2}}+2\delta(s,t)\epsilon^{\frac{1}{2}}n^{3-\frac{1}{s^2}},
\end{aligned}
\end{equation}
where $C_2(s,t)$ is from Theorem \ref{thm: ex3 of Ktss}, and $\delta(s,t)$ is from Theorem \ref{thm: ex bip Ktss}.
Combining (\ref{eq: bip eq1}) and (\ref{eq: bip eq2}), and setting $g(\epsilon)=12\delta(s,t)\epsilon^{\frac{1}{2}}+C_2(s,t)\epsilon^{3-\frac{1}{s^2}}$,
it satisfies (\ref{eq: goal to prove}) and $\lim_{\epsilon\to 0}g(\epsilon)=0$.

When $t=1$, we have
$$\ex^{bip}_3(\epsilon n,n,K_{1,s,s})\leq C_3(s)n^{3-\frac{1}{s}},$$
$$\ex^{bip}_3(n,\epsilon n,K_{1,s,s})\leq C_3(s)n^{3-\frac{1}{s}},$$
and 
$$\ex_3(\epsilon n,K_{1,s,s})\leq C_2'(s)(\epsilon n)^{3-\frac{1}{s}},$$
where $C_3(s)$ is from Theorem \ref{thm: ex bip Ktss}, and $C_2'(s)$ is from Theorem \ref{thm: ex3 of Ktss}.
Thus, with a similar proof, we are done.

\section{Tur\'an number of \texorpdfstring{$\cF_{sim}(t)$}{F\_\{sim\}(t)}}\label{sec: section 4}

In this section, we study the Tur\'an number of the hypergraph $\cF_{sim}(t)$ (see Figure \ref{fig: F5 sim}).
Note that $\cF_{sim}(t)$ is contained in a blow-up of $\cF_5$.
For convenience, we write $\cF = \cF_{sim}(t)$ in the rest of this section.
We will prove Theorem \ref{thm: F5 v8 t}, which we restate here for convenience.

\medskip

\noindent\textbf{Theorem.}
    {\it When $n$ is sufficiently large, we have
    \[
        \ex_3(n, \cF) = \max_{a+b+c =n-t} \left\{ abc + t \left( \binom{a}{2} + \binom{b}{2} + \binom{c}{2} \right) \right\} + \binom{t}{3},
    \]
    where $a,b,c$ are positive integers.
    The maximum is achieved when $a,b,c$ differ by at most $1$.
    Equality holds if and only if the extremal hypergraph is obtained from the complete tripartite hypergraph on $n-t$ vertices $T_3(n-t,3)$ by adding $t$ vertices forming a clique, and each vertex of this clique with each pair that lies in one part of the $T_3(n-t,3)$ forms a hyperedge.
}

\smallskip

\begin{proof}
Let $\cG$ be the maximum $\cF$-free 3-uniform hypergraph on $n$ vertices. 
Let $A,B,C,\delta,\epsilon$ and $\cK=\cK(\cG)$, $\cM=\cM(\cG)$, $\cE=\cE(\cG)$ be defined as in Theorem \ref{thm: blow-up sta}. In this section, we set $\delta=\frac{1}{10t}$.
Let $A',B',C'$ be defined as in Section \ref{sec: notation}. We suppose that $\cG$ has the maximum number of good hyperedges among the partition $A,B,C$.
Let $X=A'\cup B'\cup C'$.
We set $$\cE_1=\{E\in \cE:\text{$E\subseteq (A\cup B\cup C)\setminus X$}\}.$$

\begin{claim}\label{claim: empty V_i'}
    We have $\cE_1=\emptyset$.
\end{claim}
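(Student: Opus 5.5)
We argue by contradiction: assume some extra hyperedge $E\in\cE_1$ and build a copy of $\cF=\cF_{sim}(t)$ in $\cG$. Since $E$ is not good and avoids $X$, it has two vertices in one part, say $E=\{x,y,z\}$ with $x,y\in A\setminus A'$ (the case $x,y\in B$ or $C$ is symmetric). The third vertex $z$ lies in $A$, $B$ or $C$, but still outside $X$. Recall that every vertex outside $X$ lies in at least $|B||C|-2\delta n^2$ good hyperedges (or the analogous quantity for its part), with $\delta=\frac{1}{10t}$.

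We embed $\cF$ by placing the hyperedge $f_3f_4f_5$ on $E$, with $f_3=x$, $f_4=y$, $f_5=z$, and then embed the two ``tails'' of $\cF$ using good hyperedges only.

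\textbf{Front part.} Take $f_1\in B$ and $f_2\in C$ with $f_1f_2x, f_1f_2y\in\cG$. Since $x,y\notin A'$, each misses at most $2\delta n^2$ pairs of $B\times C$. Hence at least $|B||C|-4\delta n^2>0$ pairs $(b,c)$ work, and we may in addition avoid $z$ and any constantly many vertices already used.

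\textbf{Back part.} We need $f_6,f_7$ with $zf_6f_7\in\cG$, together with $t$ vertices $f_8^1,\dots,f_8^t$ satisfying $f_6f_7f_8^i\in\cG$. Suppose $z\in P$ for a part $P\in\{A,B,C\}$, and let $Q,R$ be the other two parts. Choose $f_6\in Q$ and $f_7\in R$ with $zf_6f_7\in\cG$ and with the pair $f_6f_7$ proper. This is possible because $z$ lies in at least $|Q||R|-2\delta n^2$ good edges, while there are only $3\epsilon^{1/2}n^2$ improper pairs and $O(n)$ pairs meeting used vertices. A proper pair satisfies $d_{\cM}(f_6f_7)\le\epsilon^{1/2}n$, so $f_6f_7$ extends to at least $|P|-\epsilon^{1/2}n-O(1)\ge t$ fresh vertices of $P$. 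Pick $t$ of them as the $f_8^i$, distinct from $x,y,z,f_1,f_2$.

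All vertices are distinct, so this gives a copy of $\cF$, a contradiction. Hence $\cE_1=\emptyset$.

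The main point needing care is the distinctness bookkeeping when $z\in A$, since then $x,y$ lie in the same part as the $f_8^i$. This is harmless, because we simply exclude $x$ and $y$, and the other $O(1)$ used vertices, from the available choices. The counting margins are all of order $n^2$ or $n$, so excluding $O(1)$ vertices costs only $O(n)$ pairs.
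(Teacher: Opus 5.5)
Your proof is correct and follows the paper's argument. You realize $f_3f_4f_5$ as the extra edge, use a common good pair of the two same-part vertices for $f_1f_2$, and extend the third vertex through a proper good pair to $t$ choices of $f_8$; your uniform treatment of the part containing $z$ covers the cases the paper leaves as ``similar''.

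One caveat, shared with the paper: with $\delta=\frac{1}{10t}$ taken literally, the bound $|B||C|-4\delta n^2>0$ fails for $t\le 3$, because $|B||C|\approx n^2/9$. So $\delta$ should be chosen smaller, e.g.\ $\delta\le\min\{\frac{1}{10t},\frac{1}{40}\}$, which the stability theorem permits.
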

\begin{proof}
First, suppose that there is a hyperedge $E=a_1a_2a_3$ with $a_i \in A \setminus A'$ for $i=1,2,3$.
Since $a_1,a_2 \in A\setminus A'$, by the definition of $A'$, 
$d_\cM(a_i)\leq 2\delta n^2$.
Then, there exists a pair $b\in B$ and $c\in C$ such that $a_1bc$, $a_2bc\in E(\cG)$.

Similarly, since $a_3 \in A\setminus A'$, and the number of improper pairs is at most $3\epsilon^{\frac{1}{2}} n^2$,
there exists a proper pair $b_1\in B,c_1\in C$ disjoint from $b,c$ such that $a_3b_1c_1\in E(\cG)$.

Since $b_1c_1$ is a proper pair, there exists $t$ vertices $a_4,\ldots,a_{t+3}$ in $A\setminus A'$ disjoint from $a_1,a_2,a_3$ such that $a_ib_1c_1\in E(\cG)$ for each $i=4,\ldots,t+3$.
Then the vertices $\{a_1,a_2,a_3,b,c,b_1,\\c_1,a_4,\dots,a_{t+4}\}$ span a copy of $\cF$~(see Figure~\ref{fig: the case when}), a contradiction.
 \begin{figure}[h]
        \centering
\includegraphics[width=0.6\textwidth]{ 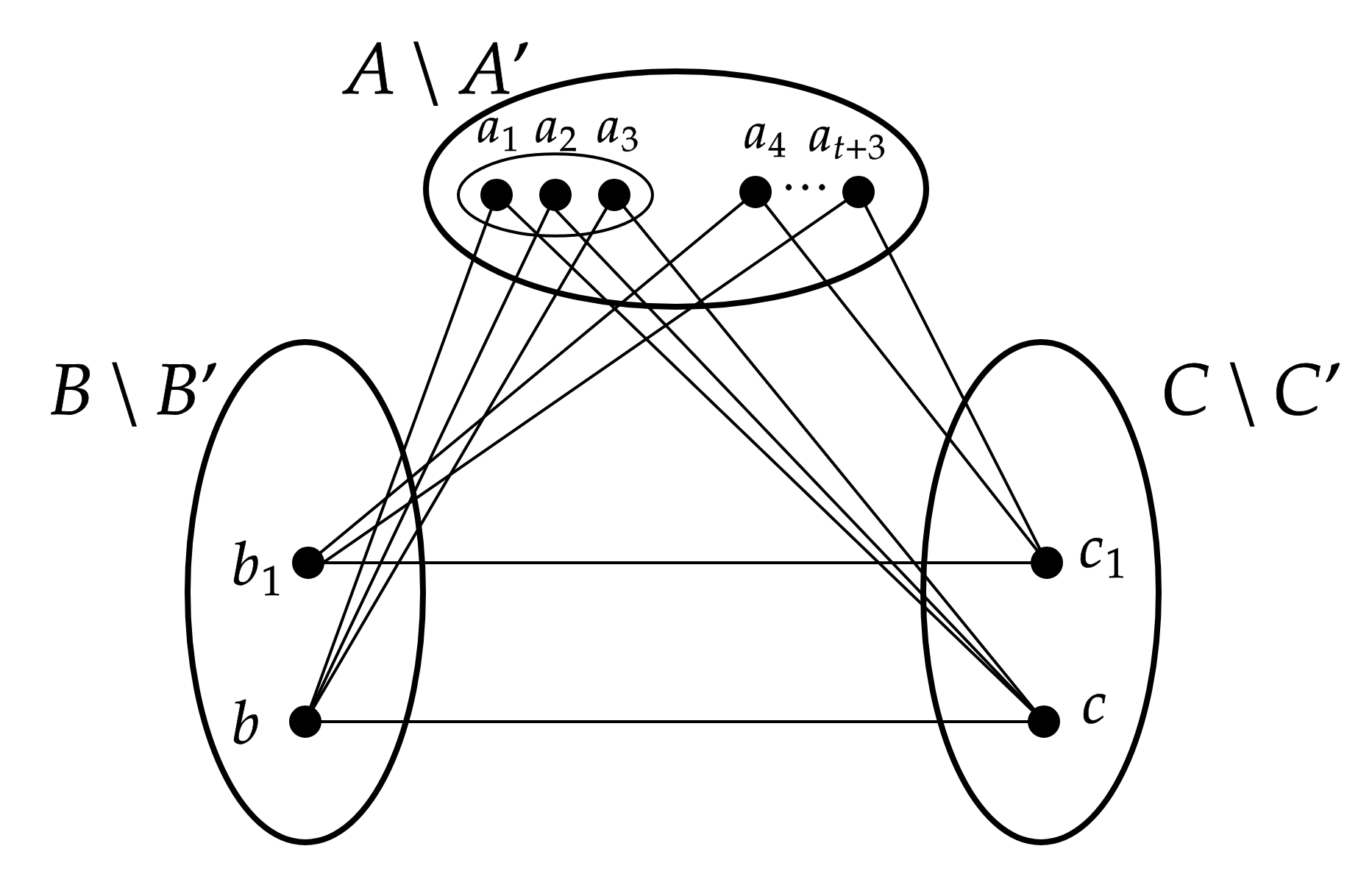}
        \caption{The case when there exists a hyperedge containing three vertices from $A\setminus A'$.}
        \label{fig: the case when}
    \end{figure}
When there exists a hyperedge $E\subseteq B\setminus B'$ or $E\subseteq C\setminus C'$, we can similarly find a copy of $\cF$.
When there is a hyperedge $a_1a_2b$ with $a_1a_2\in A \setminus A'$ and $b \in B\setminus B'$, we can use a similar argument to find a copy of $\cF$ in $\mathcal{H}$.
The details are omitted here. By symmetric arguments, the claim holds.
\end{proof}

Let us consider the hyperedges that contain vertices in $X$.
For a pair of vertices $a,b$, if $a,b$ are from different parts, then we call it a \textbf{good pair}, and we call it a \textbf{bad pair} if they are from the same part.
\begin{claim}\label{claim: X property}
    Let $x$ be a vertex in $X$.
    Suppose that there exists a hyperedge $xyz$ such that $\{y,z\} \subseteq A\setminus A'$ or $\{y,z\} \subseteq B\setminus B'$ or $\{y,z\} \subseteq C\setminus C'$.
    Then for any good pair with $u, w \notin \{y,z\}$, if $uw$ is proper, then the hyperedge $xuw\not\in E(\cG)$. Furthermore, the number of hyperedges containing $x$ and a good pair is at most $4\epsilon^{\frac{1}{2}}n^2$.
\end{claim}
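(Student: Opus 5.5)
We argue by contradiction: we assume $xyz\in E(\cG)$ with, say, $\{y,z\}\subseteq A\setminus A'$, and that $xuw\in E(\cG)$ for some proper good pair $uw$ disjoint from $\{y,z\}$. We then embed $\cF=\cF_{sim}(t)$ with $x$ playing the role of $f_5$. The hyperedges $f_5f_6f_7$ and $f_6f_7f_8^{(i)}$ ($i\le t$) will be realized by $x,u,w$ and $t$ new vertices, and the $\cF_5$-part $f_1f_2f_3,f_1f_2f_4,f_3f_4f_5$ by $y,z$ together with a common neighbouring pair.

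First, since $uw$ is proper, $d_\cM(\{u,w\})\le \epsilon^{1/2}n$. If, say, $u\in B$ and $w\in C$, then $uw$ lies in good hyperedges with all but $\epsilon^{1/2}n$ vertices of $A$. Since $|A'|\le \frac12\delta^2 n$ and $|A|\ge n/3-\epsilon n$, we can pick $t$ vertices $a_1,\dots,a_t\in A\setminus A'$, distinct from $x,y,z$, with $a_iuw\in E(\cG)$. These play $f_8^{(1)},\dots,f_8^{(t)}$, with $u,w$ as $f_6,f_7$. The cases $u\in A$ or $w\in A$ are handled the same way, using the third part.

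Next, we need $f_1,f_2$ with $f_1f_2y,f_1f_2z\in E(\cG)$, so that $y,z$ play $f_3,f_4$ and $xyz$ is $f_3f_4f_5$. Since $y,z\in A\setminus A'$, each misses at most $2\delta n^2$ pairs of $B\times C$. Hence at least $|B||C|-4\delta n^2>0$ pairs $(b,c)$ satisfy $ybc,zbc\in E(\cG)$. With $\delta=\frac1{10t}$ this count is still $\Omega(n^2)$, so we can choose such a pair with $b,c\notin\{x,u,w,a_1,\dots,a_t\}$, excluding only $O(n)$ pairs. All chosen vertices are distinct, so we obtain a copy of $\cF$, a contradiction. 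The cases $\{y,z\}\subseteq B\setminus B'$ or $\{y,z\}\subseteq C\setminus C'$ are symmetric.

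For the counting statement, a hyperedge $xuw$ with $uw$ a good pair is of one of three kinds: $uw$ improper, or $\{u,w\}\cap\{y,z\}\neq\emptyset$, or a proper pair disjoint from $\{y,z\}$. The last kind is excluded by the first part. There are at most $3\epsilon^{1/2}n^2$ improper pairs, and at most $2n$ pairs meet $\{y,z\}$. So the total is at most $3\epsilon^{1/2}n^2+2n\le 4\epsilon^{1/2}n^2$ for large $n$.

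The main obstacle is the bookkeeping of vertex disjointness across the different cases for which parts contain $u,w$. In particular, $x$ itself may lie in $A$, $B$ or $C$, and we must make sure the chosen $a_i$, $b$, $c$ avoid $x$ and avoid each other. Each such exclusion only removes $O(1)$ vertices or $O(n)$ pairs, so the choices remain available.
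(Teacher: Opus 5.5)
Your proof is the paper's argument: $x$ plays $f_5$, $y,z$ play $f_3,f_4$ over a common good pair $b,c$, and the proper pair $uw$ plays $f_6f_7$ with $t$ of its common neighbours as the copies of $f_8$; the count $3\epsilon^{1/2}n^2+2n$ is obtained the same way, with the exceptional pairs being those meeting $\{y,z\}$, which is what the first part actually requires. One caveat, shared with the paper: the step $|B||C|-4\delta n^2>0$ needs roughly $\delta<1/36$, so your claim that this count is $\Omega(n^2)$ for $\delta=\frac{1}{10t}$ is false when $t\le 3$, and $\delta$ should simply be taken smaller there.
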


\noindent
\begin{proof}
For the first statement, suppose otherwise, we may assume $y,z \in A\setminus A'$, then there exists a good pair $b,c$ with $b\in B,c\in C$ and $b,c \notin \{x,y,z,u,w\}$, such that $ybc,zbc\in E(\cG)$.
Furthermore, since $uw$ is a proper pair, there exists $t$ vertices $a_1,\ldots,a_t \notin \{x,y,z,u,w,b,c\}$  such that $a_i uw\in E(\cG)$ for each $i=1,2,\ldots,t$.
Then the vertices $\{x,y,z,u,w,b,c\}$ and $\{a_1,a_2,\ldots,a_t\}$ span a copy of $\cF$, a contradiction.

The hyperedges containing $x$ and a good pair $u_1,u_2$ satisfy the following condition:
Either the good pair $u_1,u_2$ is improper,
or $\{u_1,u_2\}\cap \{b,c\}\neq \emptyset$.
Since the number of improper pairs is at most $3\epsilon^{\frac{1}{2}}n^2$,
and the number of good pairs intersecting with $\{b,c\}$ is at most $2n$,
the number of hyperedges containing $x$ and a good pair is at most $3\epsilon^{\frac{1}{2}}n^2+2n<4\epsilon^{\frac{1}{2}} n^2$.
    
\end{proof}

Let us partition the vertices in $X$ into three types.
Let $x$ be a vertex in $X$,
\begin{enumerate}
    \item if there exists a pair $y,z$ from $A\setminus A'$ or $B\setminus B'$ or $C\setminus C'$, and $t$ other vertices $x_1,\ldots, x_t$ from $X$, such that $xyz\in E(\cG)$ and $x_iyz\in E(\cG)$ for each $i=1,\ldots,t$, then we say that $x$ is of \textbf{Type 1};
    \item if there is no hyperedge containing $x$ and two vertices $u_1,u_2$ such that $\{u_1,u_2\}\subseteq A\setminus A'$ or $\{u_1,u_2\}\subseteq B\setminus B'$ or $\{u_1,u_2\}\subseteq C\setminus C'$, then we say $x$ is of \textbf{Type 2};
    \item otherwise, we say $x$ is of \textbf{Type 3}.
\end{enumerate}
Let $X_1, X_2, X_3$ be the set of vertices of Type 1, Type 2 and Type 3, respectively.
For a vertex $x$, recall that $d_\cG(x)$ is the number of hyperedges in $E(\cG)$ containing $x$.
\begin{claim}\label{claim: X1}
    For every $x \in X_1$, we have $d_{\cG}(x)\le 5\delta^{\frac{3}{2}}n^2$.
\end{claim}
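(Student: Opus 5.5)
The bound should follow by splitting the hyperedges at $x$ according to the pair $e\setminus\{x\}$ and showing that only few hyperedges of each kind can occur. Fix $x\in X_1$. By the definition of Type 1 there is a pair $y,z$, say $y,z\in A\setminus A'$ (the other parts are symmetric), and $t$ further vertices $x_1,\dots,x_t\in X$ with $xyz\in E(\cG)$ and $x_iyz\in E(\cG)$ for all $i$. I split $\cG(x)$ into three classes:
\begin{itemize}
\item[(i)] hyperedges $xuw$ with $uw$ a good pair;
\item[(ii)] hyperedges $xuw$ meeting $X\setminus\{x\}$ or $\{y,z\}$;
\item[(iii)] hyperedges $xuw$ with $u,w$ in the same part, $u,w\notin X$ and $\{u,w\}\cap\{y,z\}=\emptyset$.
\end{itemize}

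Class (i) is handled directly by Claim \ref{claim: X property}. Since $xyz\in E(\cG)$ with $y,z\in A\setminus A'$, that claim gives at most $4\epsilon^{1/2}n^2$ hyperedges of this class. Class (ii) is trivially at most $|X|n+2n\le \frac32\delta^2n^2+2n$, using (\ref{eq: size of X}).

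The main step is to show class (iii) is empty, by embedding $\cF=\cF_{sim}(t)$. Suppose $xuw\in E(\cG)$ with $u,w$ in one part, outside $X$, and disjoint from $\{y,z\}$. I use the labelling $\{f_1f_2f_3,f_1f_2f_4,f_3f_4f_5,f_5f_6f_7,f_6f_7f_8\}$ with $f_8$ blown up $t$ times, and map
\begin{itemize}
\item $f_5\mapsto x$, $f_6,f_7\mapsto y,z$, and the $t$ copies of $f_8\mapsto x_1,\dots,x_t$;
\item $f_3,f_4\mapsto u,w$.
\end{itemize}
This covers the edges $f_3f_4f_5$, $f_5f_6f_7$ and all copies of $f_6f_7f_8$. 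It remains to find $f_1f_2$, that is, a good pair $bc$ in the two parts other than that of $u,w$ with $ubc,wbc\in E(\cG)$, avoiding the $O(t)$ vertices already used. Since $u,w\notin X$, each of them misses at most $2\delta n^2$ good hyperedges. Hence at least $|B||C|-4\delta n^2-O(n)>0$ pairs work (for $u,w\in A$, say), because $\delta=\frac1{10t}$ is small compared with $1/9$. This gives a copy of $\cF$, a contradiction.

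The only delicate point is checking that all vertices are distinct. The $x_i$ lie in $X$ while $u,w,y,z$ do not, and $u,w\notin\{y,z\}$ by the definition of class (iii). The pair $b,c$ is chosen avoiding everything else.

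Summing the three classes, with $\epsilon=\delta^3$ so that $\epsilon^{1/2}=\delta^{3/2}$,
\[
d_\cG(x)\le 4\delta^{3/2}n^2+\tfrac32\delta^2n^2+2n\le 5\delta^{3/2}n^2.
\]
The last inequality holds because $\delta\le 1/10$ gives $\frac32\delta^2\le \frac12\delta^{3/2}$, and $n$ is large. I expect the embedding step for class (iii) to be the main obstacle, specifically matching the roles of $f_5$ and $f_6f_7f_8$ correctly and ensuring disjointness. The rest is bookkeeping.
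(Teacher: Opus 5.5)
Your proof is correct and is essentially the paper's argument. It uses the same split of $\cG(x)$ into hyperedges through $X$, through a good pair (bounded by Claim~\ref{claim: X property}), and through a bad pair outside $X$, and it rules out the last kind when it avoids $\{y,z\}$ by the same copy of $\cF$: $x$ as $f_5$, $y,z$ as $f_6,f_7$, $x_1,\dots,x_t$ as the copies of $f_8$, the bad pair as $f_3,f_4$, and a common good pair $b,c$ as $f_1,f_2$.

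One caveat, which the paper shares: finding $b,c$ requires $|B||C|-4\delta n^2-O(n)>0$, i.e.\ roughly $\delta<\frac{1}{36}$. This fails for $\delta=\frac{1}{10t}$ when $t\le 3$, so your remark that $\frac{1}{10t}$ is small compared with $\frac19$ is false as stated. The fix is simply to choose $\delta$ smaller, which changes nothing else in your argument.
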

\begin{proof}
Since $x \in X_1$, there exists a pair $y,z$ with $\{y,z\}\subseteq A\setminus A'$ or $\{y,z\}\subseteq B\setminus B'$ or $\{y,z\}\subseteq C\setminus C'$, and $t$ other vertices $x_1,\ldots, x_t$ from $X$, $xyz,x_iyz\in E(\cG)$ for each $i=1,\ldots,t$.
The hyperedges containing $x$ can be divided into three types: 
\begin{enumerate}
    \item those containing another vertex from $X$, with at most $\frac{3}{2}\delta^2 n^2$ such hyperedges by (\ref{eq: size of X});
    \item those containing $x$ and a pair of vertices in different parts outside $X$, with at most $4\epsilon^{\frac{1}{2}} n^2=4\delta^{\frac{3}{2}}n^2$ such hyperedges by Claim~\ref{claim: X property};
    \item those containing $x$ and a bad pair outside $X$, that is, two vertices from $A\setminus A'$ or $B\setminus B'$ or $C\setminus C'$.
\end{enumerate}
If there is a hyperedge containing $x$ and a bad pair $y',z'$, we may assume $y',z' \in A\setminus A'$.
If $y',z' \notin \{y,z\}$, then by the definition, there exists a pair $b\in B,c\in C$ with $b,c\notin \{x,y,z,y',z',x_1,\ldots,x_t\}$, such that $y'bc,z'bc\in E(\cG)$.
Then the vertices $\{x,y,z,y',z',b,c,x_1,\\ x_2,\ldots,x_t\}$ span a copy of $\cF$, a contradiction.

That is, the bad pair $y',z'$ must contain at least one vertex from $\{y,z\}$.
There are at most $2n$ such bad pairs, and then the claim follows.

\end{proof}  
\begin{claim}\label{claim: X2}
    If $x \in X_2$, then $d_{\cG}(x) \le \frac{1}{9}n^2 + 3\epsilon n^2 -2 \delta n^2$.
\end{claim}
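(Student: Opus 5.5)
Since $x\in X_2$, no hyperedge of $\cG$ contains $x$ together with a bad pair lying entirely outside $X$, i.e.\ a pair inside $A\setminus A'$, $B\setminus B'$ or $C\setminus C'$. So I will split the hyperedges containing $x$ into two classes: those whose other two vertices form a good pair, and those whose other two vertices include at least one vertex of $X$. The first class will be controlled by the definition of $X$, and the second by the bound (\ref{eq: size of X}) on $|X|$.

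\textbf{Second class.} The number of hyperedges containing $x$ and another vertex of $X$ is at most $|X|\,n\le \frac{3}{2}\delta^2n^2$.

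\textbf{First class.} The hyperedges containing $x$ and a good pair are of two kinds.
\begin{enumerate}
\item The good pair lies in the two parts not containing $x$. Say $x\in A'$; then the pair lies in $B\times C$. These are exactly the good hyperedges at $x$. By the definition (\ref{eq: def of A'}) there are fewer than $|B||C|-2\delta n^2$ of them.
\item The good pair uses the part of $x$ and one other part, e.g.\ a pair in $A\times B$. Together with $x$ such a triple has two vertices in $A$, so it is an extra hyperedge. It also involves a bad pair $\{x,a\}$ with $a\in A$, but that pair contains $x\in X$, so the Type 2 condition says nothing about it directly.
\end{enumerate}

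The main obstacle is bounding the second kind. My first attempt is to imitate Claim \ref{claim: X property}: I would try to embed $\cF$ using $x$ as $f_3$ or $f_4$, the good pair as the other, and $t$ vertices of $A\setminus A'$ forming hyperedges with a proper pair as $f_8$. I am not confident this embedding closes. The fallback is the maximality assumption that $\cG$ has the maximum number of good hyperedges. If $x$ lay in many hyperedges of the form $x\,a\,b$ with $a\in A$, $b\in B$, then moving $x$ from $A$ to $C$ would turn all of them into good hyperedges. So the good degree of $x$ is at least the number of such hyperedges, and likewise for pairs in $A\times C$.

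\textbf{Combining the bounds.} Write $g$ for the good degree of $x$ and $e_B,e_C$ for the numbers of its non-good hyperedges on good pairs meeting $B$ and $C$. The maximality comparison gives $e_B,e_C\le g<|B||C|-2\delta n^2$. A naive sum then gives about three times the target, which is too weak. So I need a sharper argument: $x$ must essentially commit to a single part. I would count with proper pairs. For a proper pair $uw$ with $u\in A$, $w\in B$, together with the dense structure of $A\setminus A'$, the presence of hyperedges both of the form $x$ with $(B,C)$ and of the form $x$ with $(A,B)$ should produce $\cF$ with $x$ playing $f_5$. That would force all but $O(\epsilon^{1/2}n^2)$ of the first-class hyperedges into one of the three part-pair classes. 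Once that holds, maximality places the dominant class as the good hyperedges, bounded by $|B||C|-2\delta n^2\le \frac19n^2+3\epsilon n^2-2\delta n^2$, using $|B|,|C|\le(\frac13+\epsilon)n$. The error terms $\frac32\delta^2n^2$ and $O(\epsilon^{1/2}n^2)$ are then to be absorbed into the slack between $3\epsilon n^2$ and the $(\frac23+\epsilon)\epsilon n^2$ coming from expanding $|B||C|$. If that slack turns out insufficient, I would take a slightly smaller threshold in the definition of $A'$ relative to $\delta$.
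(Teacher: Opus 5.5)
Your outline has the right frame: split off the at most $\frac32\delta^2n^2$ hyperedges meeting $X$, note that Type 2 leaves only good link pairs outside $X$, and use maximality of the number of good hyperedges to get $N_{AB},N_{AC}\le N_{BC}$ for $x\in A'$. But the decisive step is missing. You must control what happens when $x$ has link edges both in $B\times C$ and in $A\times B$ (or $A\times C$), and the embedding you propose, with $x$ playing $f_5$, cannot work. In that embedding $f_3f_4$ is a link pair of $x$ outside $X$, hence a good pair. Two vertices in different parts have no pair $f_1f_2$ forming good hyperedges with both: if $f_3\in A$ and $f_4\in B$, the pair would have to meet $B,C$ and also $A,C$. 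So you would need extra hyperedges you have no control over. The concentration statement you aim for (all but $O(\epsilon^{1/2}n^2)$ link edges in one class) is also not what the structure gives, and it is not needed.

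The missing idea is the paper's Case 1, which is essentially your abandoned first attempt made precise. Suppose $x$ has a single link edge $v_av_b$ with $v_a\in A\setminus A'$ and $v_b\in B\setminus B'$. Let $u_bu_c$ be a $B\times C$ link edge of $x$ outside $X$, disjoint from $v_a,v_b$, with $v_au_bu_c\in E(\cG)$. Then set $f_1f_2=u_bu_c$, $f_3=x$, $f_4=v_a$, $f_5=v_b$, and $f_6f_7=w_aw_c$, a proper good pair with $v_bw_aw_c\in E(\cG)$ (it exists since $v_b\notin B'$). Taking $t$ common neighbours of $w_aw_c$ as the copies of $f_8$ gives a copy of $\cF$. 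Since $v_a\notin A'$, all but $2\delta n^2$ pairs of $B\times C$ form a hyperedge with $v_a$, so $N_{BC}\le 2\delta n^2+n$. Maximality then gives $d_\cG(x)\le 3(2\delta n^2+n)+O(\delta^2n^2)$, far below $\frac19n^2$ for small $\delta$. In the remaining case $x$ has no link edges in $(A\setminus A')\times(B\setminus B')$ or $(A\setminus A')\times(C\setminus C')$. Then $d_\cG(x)$ is at most its good degree, which is less than $|B||C|-2\delta n^2$, plus the $X$-term. This dichotomy replaces your concentration claim.

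Finally, the $\frac32\delta^2n^2$ term cannot be absorbed into slack of order $\epsilon n^2=\delta^3n^2$. Nor can you shrink the threshold defining $A'$ inside this claim, since $X_2$ is defined from it. What this argument (and the paper's) actually yields is $\frac19n^2-2\delta n^2+O(\delta^2n^2)$, which is all the later count uses.
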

\begin{proof}
First, the number of hyperedges containing $x$ and another vertex from $X$ is at most $\frac{3}{2}\delta^2 n^2$.

Then we consider the hyperedges containing $x$ and two vertices from $V(\mathcal{\cG}) \setminus X$. 
Let $L^\star_x$ be the link graph of $x$ in $V(\cG) \setminus X$.
Since $x \in X_2$, by the definition of $X_2$, $L_x^\star$ is a tripartite graph with parts $A\setminus A',B\setminus B', C\setminus C'$.
Without loss of generality, assume $x \in A$.
Let $N_{AB}$ (resp. $N_{BC},N_{AC}$) be the number of edges in $L^\star_x$ between $A$ and $B$ (resp. $B$ and $C$, $A$ and $C$).
Then we have $N_{BC} \ge N_{AC},N_{BC}\geq N_{AB}$.
Otherwise, if $N_{AC} > N_{BC}$, we can move $x$ to $B$, which increases the number of good hyperedges, contradicting the choice of the partition.
Moreover, we have $N_{BC} \le |B|\cdot|C|-2\delta n^2$, by the definition of $A'$.
Then we consider two cases.

\textbf{Case 1.}
There is an edge in $L^\star_x$ between $A\setminus A'$ and $B\setminus B'$ or $A\setminus A'$ and $C \setminus C'$.

Without loss of generality, suppose there is an edge $v_av_b\in L^\star_x$ with $v_a\in A\setminus A'$ and $v_b\in B\setminus B'$.

Then assume that there is an edge $u_b u_c$ in $L^\star_x$ disjoint with $v_a,v_b$ where $u_b \in B\setminus B'$ and $u_c \in C\setminus C'$.
We claim that $v_au_bu_c$ is not a hyperedge in $E(\cG)$.
Otherwise, there exists a copy of $\cF$ in $\mathcal{H}$
where $v_b$ plays the role of $f_5$ in $\cF$.
Actually, since $v_b \in B\setminus B'$, by the definition, there exists a good proper pair $w_a w_c$ with $w_a \in A\setminus A', w_c \in C\setminus C'$, disjoint from $v_a,v_b,u_b,u_c$, such that $w_av_b w_c\in E(\cG)$.
Then, there exists $t$ vertices $b_1,\ldots,b_t$ in $B\setminus B'$ such that $b_i w_a w_c\in E(\cG)$ for each $i=1,2,\ldots,t$.
Then the vertices $\{x,v_a,v_b,u_b,u_c,w_a,w_c,b_1,b_2,\ldots,b_t\}$ span a copy of $\cF$, a contradiction.

Since $v_a\in A\setminus A'$, the number of good pairs $u_b,u_c$ between $B\setminus B'$ and $C\setminus C'$ such that $v_au_bu_c$ is not a hyperedge is at most $2\delta n^2$. And the number of pairs $u_b,u_c$ intersecting with $v_a,v_b$ is at most $n$.
Then the number of edges in $L^\star_x$ between $B\setminus B'$ and $C\setminus C'$ is at most $n + 2\delta n^2$, in this case. Then $$d_\cG(x)\leq N_{AC}+N_{BC}+N_{AB}+\frac{3}{2}\delta^2n^2\leq 3N_{BC}+\frac{3}{2}\delta^2n^2\leq 3n+6\delta n^2+\frac{3}{2}\delta^2n^2<\frac{1}{9}n^2+3\epsilon n^2-2\delta n^2$$
when $\delta=\frac{1}{10t}$ and $n$ is sufficiently large.

\textbf{Case 2.}
There is no edge in $L^\star_x$ between $A\setminus A'$ and $B\setminus B'$ or $A\setminus A'$ and $C\setminus C'$.

Then the number of edges in $L^\star_x$ in $V(\mathcal{H}) \setminus X$ is exactly $N_{BC} \le |B|\cdot|C| -2\delta n^2$.

As a conclusion, the number of hyperedges containing $x$ is at most $\epsilon n^2 + |B||C| - \delta n^2 \le \frac{1}{9}n^2 + 3\epsilon n^2 - 2\delta n^2$.
\end{proof}  

Now we consider the vertices in $X_3$.

\begin{claim}\label{claim: X_3}
    The number of hyperedges containing at least one vertex from $X_3$ is at most $|X_3|\cdot({|A\setminus A'|\choose 2}+{|B\setminus B'|\choose 2}+{|C\setminus C'|\choose 2}) + |X_3|\cdot  6\delta^{\frac{3}{2}} n^2$.
\end{claim}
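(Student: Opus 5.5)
The plan is to bound the degree of each $x\in X_3$ separately and then sum over $X_3$; summing degrees overcounts hyperedges with two or more vertices in $X_3$, which only helps. For a fixed $x\in X_3$, I split the hyperedges of $\cG(x)$ into three classes, as in the proof of Claim~\ref{claim: X1}:
\begin{enumerate}
\item hyperedges containing another vertex of $X$;
\item hyperedges containing $x$ and a good pair outside $X$;
\item hyperedges containing $x$ and a bad pair outside $X$, i.e. a pair inside $A\setminus A'$, $B\setminus B'$ or $C\setminus C'$.
\end{enumerate}

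The first class has at most $|X|n\le \frac{3}{2}\delta^2n^2$ hyperedges by (\ref{eq: size of X}).

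For the second class, note that $x\notin X_2$. So there is a hyperedge $xyz$ with $\{y,z\}$ a bad pair inside one of $A\setminus A'$, $B\setminus B'$, $C\setminus C'$. Claim~\ref{claim: X property} then applies directly. It shows that the number of hyperedges containing $x$ and a good pair is at most $4\epsilon^{1/2}n^2=4\delta^{3/2}n^2$, since $\delta=\epsilon^{1/3}$.

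The third class is trivially bounded by $\binom{|A\setminus A'|}{2}+\binom{|B\setminus B'|}{2}+\binom{|C\setminus C'|}{2}$, because each bad pair outside $X$ together with $x$ gives at most one hyperedge. Adding the three classes gives
$$d_\cG(x)\le \binom{|A\setminus A'|}{2}+\binom{|B\setminus B'|}{2}+\binom{|C\setminus C'|}{2}+4\delta^{3/2}n^2+\tfrac32\delta^2n^2 .$$
Since $\delta^2\le \delta^{3/2}$, the error terms total at most $6\delta^{3/2}n^2$. Summing over $x\in X_3$ yields the claim.

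The only step needing care is the second class. I must check that the hypothesis of Claim~\ref{claim: X property} holds for every $x\in X_3$. Being of Type 3 means $x$ is neither Type 1 nor Type 2, and not being Type 2 gives exactly the required bad-pair hyperedge. The Type 1 condition involving $t$ further vertices of $X$ plays no role here; it will matter later when the hyperedges through bad pairs are double counted. I also need the second bound of Claim~\ref{claim: X property}, which holds unconditionally once such a $y,z$ exists. I expect no real obstacle, since the claim is essentially bookkeeping that parallels Claim~\ref{claim: X1}.
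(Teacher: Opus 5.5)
Your proposal is correct and follows the paper's proof essentially step for step. You use the same three-way split: hyperedges meeting $X$ again, bounded by $\frac32\delta^2n^2$; good pairs outside $X$, bounded by $4\epsilon^{1/2}n^2$ via Claim~\ref{claim: X property}, which applies because $x\notin X_2$; and bad pairs, counted trivially. The per-vertex summation you do explicitly is exactly what the paper's factor $|X_3|$ encodes.
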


\begin{proof}

First, the number of hyperedges containing one vertex from $X_3$ and another vertex from $X$ is at most $|X_3| \cdot \frac{3}{2}\delta^2 n^2$.

Then we consider the hyperedges containing exactly one vertex from $X_3$ and two vertices from $V(\mathcal{H}) \setminus X$.

There are two types of hyperedges:
\begin{enumerate}
    \item The hyperedges containing one vertex from $X_3$ and two vertices from the same part.
    \item The hyperedges containing one vertex from $X_3$ and two vertices from different parts.
\end{enumerate}

For the first type, the number of such hyperedges is at most 

   $$ |X_3|\cdot\left( {|A\setminus A'|\choose 2}+{|B\setminus B'|\choose 2}+{|C\setminus C'|\choose 2}\right).$$ 

For the second type, by Claim~\ref{claim: X property}, the number of such hyperedges is at most $|X_3|\cdot 4\epsilon^{\frac{1}{2}} n^2=|X_3|\cdot 4\delta^{\frac{3}{2}}n^2$. By summing them up, we prove the claim.  
\end{proof}

Now we count the total number of hyperedges in $\mathcal{H}$.
Recall that $\epsilon=\delta^3$ and we set $\delta=\frac{1}{10t}$.
According to Claims~\ref{claim: empty V_i'},~\ref{claim: X1},~\ref{claim: X2} and~\ref{claim: X_3}, we have

\begin{equation}\label{eq: up e(H)}
\begin{aligned}
    e(\mathcal{H}) & \le |A\setminus A'||B\setminus B'||C\setminus C'| + |X_3|\cdot \left( {|A\setminus A'|\choose 2}+{|B\setminus B'|\choose 2}+{|C\setminus C'|\choose 2}\right) \\
    &+ |X_1|\cdot 5\delta^{\frac{3}{2}} n^2 + |X_2| \cdot\left( \frac{1}{9}n^2 + 3\epsilon n^2 - 2\delta n^2 \right) + |X_3|\cdot 6\delta^{\frac{3}{2}} n^2 \\ 
    & \le {\left( \frac{n - |X|}{3}\right)}^3 + 3|X_3|\binom{\frac{n - t}{3}}{2} + (|X_1|+|X_3|)\cdot 6\delta^{\frac{3}{2}} n^2 + |X_2| \left( \frac{1}{9}n^2  - \delta n^2 \right) \\
    & \le {\left( \frac{n}{3}\right)}^3-\frac{1}{9}\cdot n^2|X|+\frac{1}{9}\cdot n|X|^2 + 3|X_3| \binom{\frac{n - t}{3}}{2} \\& + (|X_1|+|X_3|)\cdot 6\delta^{\frac{3}{2}} n^2 + |X_2| \left( \frac{1}{9}n^2  - \delta n^2 \right)\\
    & \le {\left( \frac{n}{3}\right)}^3 + 3 |X_3| \binom{\frac{n - t}{3}}{2} - (|X_1|+|X_3|)(\frac{1}{9}-6\delta^{\frac{3}{2}})n^2  - |X_2| \delta n^2+\frac{1}{4}\delta^2 n^2|X|.   \\
\end{aligned}
\end{equation}


If $e(\mathcal{H}) \ge abc + t \left( \binom{a}{2} + \binom{b}{2}  + \binom{c}{2} \right)$ where $a + b + c = n-t$ and $a,b,c$ differ by at most $1$, then we have 
\begin{equation}\label{eq: low e(H)}
\begin{aligned}
    e(\mathcal{H}) & \ge \max_{a+b+c=n-t}\left\{abc+t\left( {a\choose 2}+{b\choose 2}+{c\choose 2}\right)\right\}+{t\choose 3}\\
    & \geq{\left( \frac{n - t}{3}\right)}^3 - o(n^2) + 3t\binom{\frac{n - t}{3}}{2} \\ 
    & \ge {\left( \frac{n}{3}\right)}^3 -t \frac{1}{9}n^2 + 3t \binom{\frac{n - t}{3}}{2} - o(n^2).
\end{aligned}
\end{equation}

Combining (\ref{eq: up e(H)}) and (\ref{eq: low e(H)}), we have $|X_3|\geq t$, and $$(|X_1|+|X_3|)\left(\frac{1}{9}-6\delta^{\frac{3}{2}}\right)+|X_2|\cdot\delta-\frac{1}{4}\delta^2|X|\leq \frac{1}{9}t.$$
Since $\delta=\frac{1}{10t}$, it implies $|X_1|=|X_2|=0$, and
$|X| = |X_3| = t$.

Consider now the case where there exists a hyperedge containing two vertices from $X$ and one from $(A\cup B\cup C)\setminus X$, say $v_1x_1x_2$ where $v_1 \in A\setminus A'$ and $x_1,x_2 \in X$. 

We claim that for a bad pair $y,z$ from $A\setminus A'$ or $B\setminus B'$ or $C\setminus C'$ with $v_1 \notin \{y,z\}$, one of $x_1yz$ and $x_2yz$ is not a hyperedge in $\cG$.
Otherwise, we can find a copy of $\cF$ in $\cG$ by letting $v_1$ play the role of $f_5$ in $\cF$ and using the same argument as above.
Then by Claim~\ref{claim: X property}, the number of hyperedges in $\cG$ is at most

\begin{equation*}
\begin{aligned}
    e(\mathcal{H}) & \le |A\setminus A'|\cdot|B\setminus B'|\cdot|C\setminus C'| + (t-1) \left({|A\setminus A'|\choose 2}+{|B\setminus B'|\choose 2}+{|C\setminus C'|\choose 2}\right)\\&+ t n + t\cdot 4\epsilon^{\frac{1}{2}} n^2,
\end{aligned}
\end{equation*}

a contradiction when $\epsilon=(\frac{1}{10t})^3$.

Consider now the case where there exists a hyperedge containing one vertex from $X$ and a proper pair in $V(\mathcal{G}) \setminus X$, say $x v_2 v_3$ where $x \in X, v_2 \in B\setminus B', v_3 \in C\setminus C'$.
By Claim~\ref{claim: X property}, for any bad pair $yz$ satisfying $xyz \in E(\cG)$, $y$ or $z$ must be in $\{v_2,v_3\}$.
Then the number of hyperedges in $\mathcal{G}$ is at most
\begin{equation*}
\begin{aligned}
    e(\mathcal{H}) & \le |A\setminus A'|\cdot|B\setminus B'|\cdot|C\setminus C'| + (t-1) \left({|A\setminus A'|\choose 2}+{|B\setminus B'|\choose 2}+{|C\setminus C'|\choose 2}\right)\\&+ 2n + t\cdot 4\epsilon^{\frac{1}{2}} n^2,
\end{aligned}
\end{equation*}
a contradiction when $\epsilon=(\frac{1}{10t})^3$.

Finally, consider the case where a hyperedge containing a vertex from $X$ is either inside $X$, or the other two vertices form an improper good pair in $V(\cG) \setminus X$.
Let $\mathcal{S}$ be the set of improper good pairs in $V(\cG) \setminus X$.
Then $|\mathcal{M}| \ge \left( \frac{1}{3}n - \epsilon^{\frac{1}{2}} n \right) |\mathcal{S}|/3$.

Then the number of hyperedges in $\mathcal{G}$ is at most
\begin{equation*}
\begin{aligned}
    e(\mathcal{H}) & \le |A\setminus A'|\cdot|B\setminus B'|\cdot|C\setminus C'| - |\mathcal{M}| \\
    &+ t \left({|A\setminus A'|\choose 2}+{|B\setminus B'|\choose 2}+{|C\setminus C'|\choose 2}\right) + t |\mathcal{S}| + \binom{t}{3} \\ 
    & \le|A\setminus A'|\cdot|B\setminus B'|\cdot|C\setminus C'|+ t \left({|A\setminus A'|\choose 2}+{|B\setminus B'|\choose 2}+{|C\setminus C'|\choose 2}\right) \\
    &+ \binom{t}{3} - \left( \frac{1}{9}n - \frac{1}{3}\epsilon^{\frac{1}{2}} n - t \right) |\mathcal{S}|\\
    &\leq \max_{a+b+c=n-t}\left\{abc+t\left( {a\choose 2}+{b\choose 2}+{c\choose 2}\right)\right\}
    +{t\choose 3}-|\mathcal{S}|\cdot\left(\frac{1}{9}n-\frac{1}{3}\epsilon^{\frac{1}{2}}n-t\right).
\end{aligned}
\end{equation*}

Comparing this with the claimed bound, we have $|\mathcal{S}| = 0$ when $\epsilon=(\frac{1}{10t})^3$.

That is, $A\setminus A', B\setminus B', C\setminus C'$ form a complete tripartite $3$-uniform hypergraph in $\cG$, 
$X = \{x_1,\ldots,x_t\}$ and for every pair $yz$ in the same part, $yzx_\ell$ is a hyperedge in $\cG$ for each $\ell=1,2,\ldots,t$.
Moreover, $X$ is a clique. This completes the proof.
\end{proof}

\section{Blowing up one vertex in \texorpdfstring{$\cF_5$}{F\_5}}\label{sec: section 5}
In this section, we study the Tur\'an number of the hypergraph $\cF_5(f_3;t)$. We rewrite Theorem \ref{thm: F5 v3 t} here for convenience.

\medskip

\noindent\textbf{Theorem.}
{\it
    For sufficiently large $n$, we have $\ex_3(n,\cF_5(f_3;t))=e(T_3(n,3)^{t+})$.
}

\smallskip

\begin{proof}
    The lower bound is given by $T_3(n,3)^{t+}$ that we defined in the Introduction.
    Notice that $\cF_5(f_3;t)$ is a subgraph of a blow-up of $\cF_5[t]$.
    Let $\cG$ be the extremal hypergraph for $\cF_5(f_3;t)$. For $\epsilon^{\frac{1}{3}}=\delta < \frac{1}{3}(\frac{1}{18})^3$, let $A, B, C$ be a partition of $V(\cG)$ obtained from Theorem \ref{thm: blow-up sta}. The definitions of $A',B',C'$ and $\cM $, $\cE $ are the same as in Section \ref{sec: notation}.
    We aim to prove that 
    \begin{equation}\label{eq: goal in F5f3t}
        |\cE |<\frac{1}{2}|\cM |+S^t(|A|)+S^t(|B|)+S^t(|C|),
    \end{equation}
     where $S^t(x)$ denotes the maximum size of a $3$-uniform hypergraph on $x$ vertices in which every two vertices are contained in at most $t-1$ hyperedges. If (\ref{eq: goal in F5f3t}) holds, then the number of hyperedges in $\cG$ is at most 
    $$|E(\cK)|-|\cM| +|\cE| \leq e(T_3(n,3))+S^t(|A|)+S^t(|B|)+S^t(|C|).$$
    We define the following sets of hyperedges.
    Let $\cE_1 $ denote the collection of hyperedges in $\cE $ that contain exactly two vertices from one part and one vertex from another part. We divide $\cE_1 $ into the following parts.
    $$\cE_1^{A,B} =\{E\in \cE :|E\cap A|=2,|E\cap B|=1\},$$
    and we similarly define $\cE_1^{A,C} , \cE_1^{B,A} , \cE_1^{B,C} , \cE_1^{C,A} ,$ and $\cE_1^{C,B} $. 
    We define $\cE_2 $ as the collection of hyperedges in $\cE $ that are contained in one part, and we divide $\cE_2 $ into the following parts.
    $$\cE_2^{A} =\{E\in \cE :E\subseteq A\},$$
    and we similarly define $\cE_2^{B} , \cE_2^{C} $. For $t$ hyperedges $E_1,\dots, E_t$, we call them a {\it $t$-sunflower} with core $\{u,v\}$ if $E_i\cap E_j=\{u,v\}$ for every $1\leq i<j\leq t$. Set
    $$ \cE_2^{A,t}=\{E=uvw\in \cE_2^A : \text{$E$ is in a $t$-sunflower with hyperedges in $\cE_2^A $}\},$$
    and we similarly define $\cE_2^{B,t}$ and $\cE_2^{C,t}$.
    
    First, we deal with $ \cE_2^{A,t}$. 
    If $ \cE_2^{A,t}=\emptyset$, then we have $|\cE_2^A |\leq S^t(|A|)$.
    Otherwise, 
    suppose there are $E_1,\dots,E_t\in \cE_2^{A,t} $ such that $E_i\cap E_j=\{u,v\}$ for every $1\leq i<j\leq t$. Let $a_i=E_i\setminus \{u,v\}$ for every $1\leq i\leq t$.
    \begin{claim}\label{claim: sunflower intersect A'}
    If $E_i=\{u,v,a_i\}$, $i\in[t]$ form a $t$-sunflower, then
        at least one of $\{a_1,\dots,a_t,u\}$ is contained in $A'$, and at least one of $\{a_1,\dots,a_t,v\}$ is contained in $A'$.
    \end{claim}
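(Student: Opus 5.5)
We argue by contradiction, and by the symmetry between $u$ and $v$ it suffices to prove the first assertion. So suppose that $a_1,\dots,a_t,u$ all lie in $A\setminus A'$. The plan is to embed $\cF_5(f_3;t)$ into $\cG$. Its hyperedges are $f_1f_2f_3^i$ and $f_3^if_4f_5$ for $i=1,\dots,t$, together with $f_1f_2f_4$. We use the map
$$f_3^i\mapsto a_i,\qquad f_4\mapsto u,\qquad f_5\mapsto v.$$
Then each hyperedge $f_3^if_4f_5$ is sent to $a_iuv=E_i\in\cE_2^{A}\subseteq E(\cG)$. What remains is to find $f_1,f_2$, i.e.\ two vertices $b,c\notin\{u,v,a_1,\dots,a_t\}$ such that $a_ibc\in E(\cG)$ for all $i$ and $ubc\in E(\cG)$.

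We take $b\in B$ and $c\in C$. Each of the $t+1$ vertices $a_1,\dots,a_t,u$ lies in $A\setminus A'$. By \eqref{eq: def of A'}, each of them therefore misses fewer than $2\delta n^2$ of the pairs in $B\times C$, that is, $d_{\cM}(w)< 2\delta n^2$ for each such $w$. Hence the number of pairs $(b,c)\in B\times C$ that form a good hyperedge with all $t+1$ of these vertices is at least
$$|B||C|-2(t+1)\delta n^2\;\ge\;\Big(\tfrac n3-\epsilon n\Big)^2-2(t+1)\delta n^2>0,$$
using part~1 of Theorem~\ref{thm: blow-up sta}. The vertex $v$ lies in $A$, so it is automatically disjoint from any $b\in B$, $c\in C$. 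Taking such a pair and setting $f_1\mapsto b$, $f_2\mapsto c$ gives $b c a_i\in E(\cG)$ for every $i$ and $bcu\in E(\cG)$. All $t+3$ vertices used are distinct: the $a_i$ are distinct because the $E_i$ form a sunflower with core $\{u,v\}$, and $b,c$ lie outside $A$. So this is a copy of $\cF_5(f_3;t)$ in $\cG$, a contradiction. Swapping the roles of $u$ and $v$ (so $f_4\mapsto v$, $f_5\mapsto u$) gives the second assertion.

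The only point needing care is the counting inequality. It requires $2(t+1)\delta<\frac19$ up to the $\epsilon$ error, so $\delta$ (equivalently $\epsilon$) must be chosen small in terms of $t$ as well. This is legitimate because $t$ is fixed and $n$ is taken large after $\epsilon$; I would make this dependence explicit when fixing $\delta$. Otherwise the argument is a direct embedding, and I expect no real obstacle.
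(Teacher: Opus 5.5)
Your proof is correct and is essentially the paper's argument: assuming $a_1,\dots,a_t,u\in A\setminus A'$, you intersect their good links in $B\times C$ to find a common pair $bc$, and this pair together with the sunflower forms a copy of $\cF_5(f_3;t)$ with $v$ playing $f_5$. Two small remarks. First, membership in $A\setminus A'$ gives $d_{\cM}(w)\le 2\delta n^2$ rather than a strict inequality, which does not affect your count. Second, your point that $\delta$ must be small in terms of $t$ is apt, since the paper's stated bound $\delta<\frac13(\frac1{18})^3$ alone does not guarantee $2(t+1)\delta<\frac19$ for large $t$.
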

    \begin{proof}
        Suppose to the contrary that $\{a_1,\dots,a_t,u\}\cap A'=\emptyset$, then $\{a_1,\dots,a_t,u\}\subseteq A\setminus A'$.
        Note that for every $v\in A\setminus A'$, $$d_{\cM}(v)\leq 2\delta n^2.$$
        Let us consider the intersection of the link graph of $a_1,\dots,a_t$ and $u$. We have 
        $$\left|\left(\bigcap_{i=1}^t L(\cG_g(a_i))\right)\bigcap L(\cG_g(u))\right|\geq |B|\cdot |C|-2\delta (t+1) n^2>1$$
        when $\delta$ is sufficiently small. Then there exists a good pair $bc$ with $b\in B,c\in C$ such that $a_ibc,u bc\in \cG$ for every $1\leq i\leq t$. They form a copy of $\cF_5(f_3;t)$ in $\cG$, a contradiction (see Figure
        \ref{fig:find a F5f3t}.)
        \begin{figure}
            \centering
        \includegraphics[width=0.5\linewidth]{ 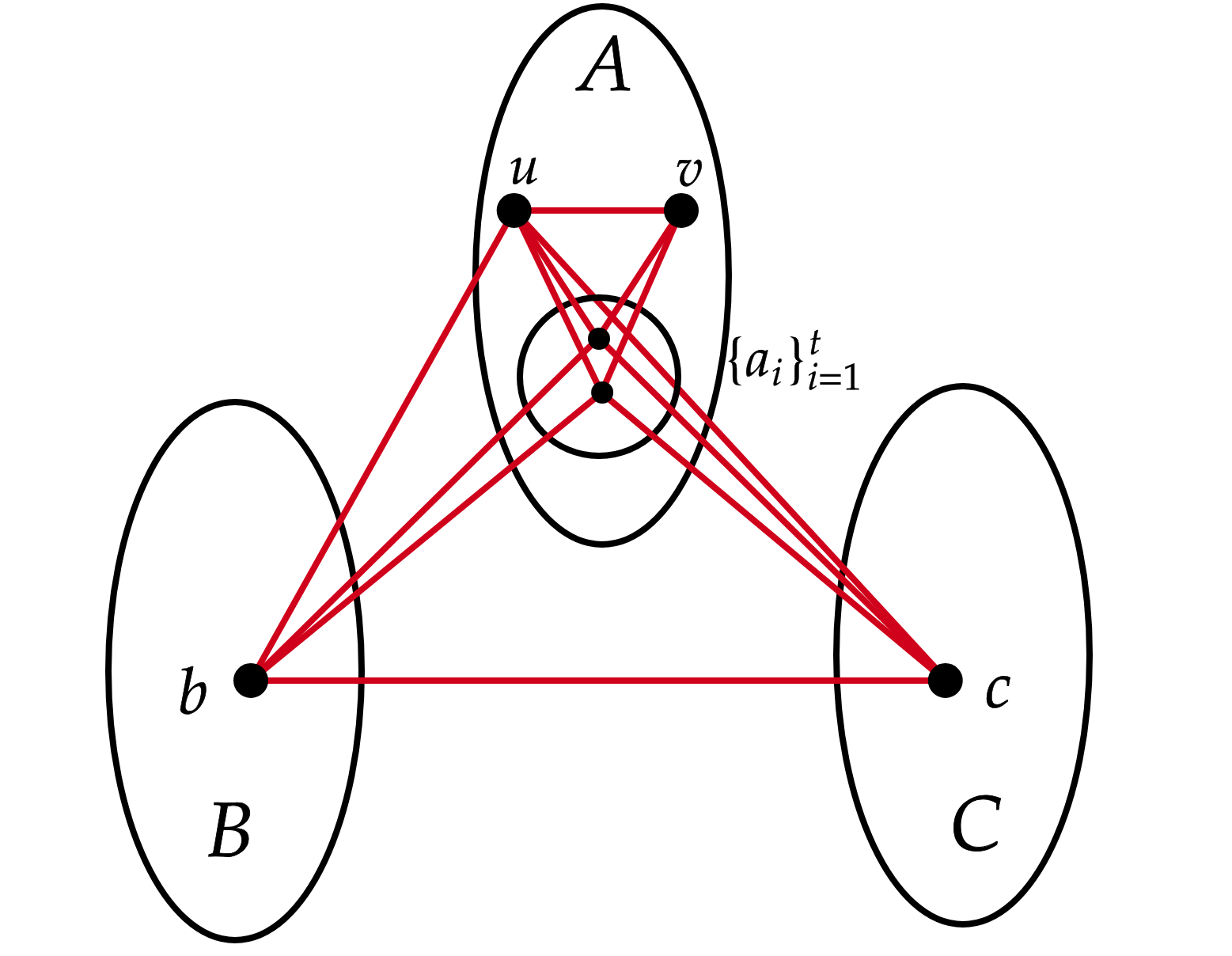}
            \caption{The vertices $\{u,v,b,c\}$ and $\{a_i\}_{i=1}^t$ forms a copy of $\cF_5(f_3;t)$}
            \label{fig:find a F5f3t}
        \end{figure}
        Symmetrically, we can prove that $\{a_1,\dots,a_t,v\}\cap A'\neq \emptyset$.
    \end{proof}
    \begin{claim}\label{eq: upper bound of E2 in F5f3t}
        We have 
        $$|\cE_2 |\leq 6\delta \cdot|\cM |+|S^t(|A|)|+S^t(|B|)+S^t(|C|).$$
    \end{claim}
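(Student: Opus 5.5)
By symmetry it suffices to show $|\cE_2^A|\le 2\delta|\cM|+S^t(|A|)$; summing over the three parts then gives the claim. I split $\cE_2^A=(\cE_2^A\setminus\cE_2^{A,t})\cup\cE_2^{A,t}$.

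\textbf{Hyperedges not in a $t$-sunflower.} If a pair $uv$ lies in at least $2t$ hyperedges of $\cE_2^A$, then the third vertices are all distinct, so any $t$ of these hyperedges form a $t$-sunflower with core $uv$, and all of them lie in $\cE_2^{A,t}$. Hence in $\cE_2^A\setminus\cE_2^{A,t}$ every pair has codegree at most $2t-1$. This is weaker than the bound $t-1$ needed for $S^t$, so I will not bound this family by $S^t(|A|)$ directly. Instead, I will show that every hyperedge in $\cE_2^{A,t}$ can be removed at a cost paid by $\cM$, and that the surviving family has pair-codegree at most $t-1$.

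Concretely, I define a subfamily $\cE^\ast\subseteq\cE_2^A$ by deleting, greedily, hyperedges from every $t$-sunflower until none remains. The result is a hypergraph on $A$ in which no pair lies in $t$ hyperedges, so $|\cE^\ast|\le S^t(|A|)$. The deleted hyperedges lie in $\cE_2^{A,t}$, so it remains to prove $|\cE_2^{A,t}|\le 2\delta|\cM|$.

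\textbf{Charging $\cE_2^{A,t}$ to $\cM$.} By Claim~\ref{claim: sunflower intersect A'}, every sunflower with core $\{u,v\}$ meets $A'$ among $\{a_1,\dots,a_t,u\}$, and likewise among $\{a_1,\dots,a_t,v\}$. I first show that every $E\in\cE_2^{A,t}$ contains a vertex of $A'$. Suppose instead that $E=uva_1\subseteq A\setminus A'$ lies in a sunflower with core $uv$. The claim then forces some petal $a_j$ with $j\ne1$ to lie in $A'$. Fix instead the core pairs $ua_1$ and $va_1$, each contained in $E$. To rule this configuration out, I would mimic the proof of Claim~\ref{claim: sunflower intersect A'} with a common good pair $bc$ in the links of $u,a_1$ and of $v$. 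I am not fully sure this works, so as a fallback I will simply charge such hyperedges through their $A'$-petal: the sunflower structure shows that each $E\in\cE_2^{A,t}$ shares its core pair with some hyperedge that contains an $A'$-vertex. This gives
\[
|\cE_2^{A,t}|\le t\cdot\#\{E\in\cE_2^A: E\cap A'\ne\emptyset\}\le t\,|A'|\,n^2 .
\]
Each vertex of $A'$ has $d_\cM\ge 2\delta n^2$, and a missing hyperedge contains at most one vertex of $A'$, so $|\cM|\ge 2\delta n^2|A'|$. Therefore $|\cE_2^{A,t}|\le \frac{t}{2\delta}|\cM|$.

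\textbf{Main obstacle.} The constant $\frac{t}{2\delta}$ is far larger than $2\delta$, so this crude count is the real difficulty. To improve it, I would bound the number of $\cE_2^{A}$-hyperedges through a vertex $x\in A'$ by roughly $\delta n^2\cdot 4\delta$. For each such hyperedge $xyz$ with $y,z\in A\setminus A'$, Claim~\ref{claim: sunflower intersect A'}-type arguments forbid $xbc$ for many good pairs $bc$. This gives $d_\cM(x)\gtrsim$ (number of such hyperedges)$/(\delta)$. Establishing this is where the bulk of the work lies, and it is the step I expect to fail or to need the extra assumption $\delta<\frac13(\frac1{18})^3$.
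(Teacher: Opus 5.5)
\textbf{There is a genuine gap: the bound $|\cE_2^{A,t}|\le 2\delta|\cM|$, which carries the whole claim, is never proved.}

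Your overall frame is fine, and simpler than you make it. Any $t$ hyperedges of $\cE_2^A$ through a common pair already form a $t$-sunflower. So $\cE_2^A\setminus\cE_2^{A,t}$ has all codegrees at most $t-1$ and is at most $S^t(|A|)$ directly; you need neither the $2t$ threshold nor greedy deletion.

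The trouble is the remaining bound on $\cE_2^{A,t}$:
\begin{itemize}
\item The only estimate you actually establish is $\frac{t}{2\delta}|\cM|$, and the ``main obstacle'' paragraph is a hope rather than an argument.
\item That paragraph also aims at the wrong quantity. The definition of $A'$ already gives $d_\cM(x)\ge 2\delta n^2$. What is needed is an \emph{upper} bound on the number of hyperedges of $\cE_2^A$ through $x$, not more missing hyperedges at $x$.
\item Your first step, that every member of $\cE_2^{A,t}$ meets $A'$, is not available. A sunflower with core $uv\subseteq A\setminus A'$ and exactly one petal in $A'$ satisfies both conclusions of Claim~\ref{claim: sunflower intersect A'}. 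So nothing excludes members of $\cE_2^{A,t}$ lying entirely inside $A\setminus A'$.
\end{itemize}

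\textbf{The missing idea} is to apply Claim~\ref{claim: sunflower intersect A'} to sunflowers whose core contains the $A'$-vertex. Fix $x\in A'$. Suppose some $a_0\in A\setminus A'$ had $t$ neighbours $a_1,\dots,a_t\in A\setminus A'$ in the link of $x$ in $\cE_2^A$. Then $xa_0a_1,\dots,xa_0a_t$ would be a $t$-sunflower with core $\{x,a_0\}$ and $\{a_0,a_1,\dots,a_t\}\cap A'=\emptyset$, contradicting the claim. So this link has maximum degree at most $t-1$. This gives two counts through $x$:
\begin{itemize}
\item at most $(t-1)n$ hyperedges of $\cE_2^A$ consist of $x$ and two vertices of $A\setminus A'$, which is linear rather than quadratic in $n$;
\item at most $|A'|n\le\frac12\delta^2n^2$ contain $x$ and a second vertex of $A'$.
\end{itemize}
Every missing hyperedge has exactly one vertex in $A$, so $|\cM|\ge\sum_{x\in A'}d_\cM(x)\ge 2\delta n^2|A'|$. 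Hence the hyperedges of $\cE_2^A$ meeting $A'$ number at most $(\frac14\delta+o(1))|\cM|$. The remaining ones lie in $A\setminus A'$, where the claim forbids every $t$-sunflower, so they number at most $S^t(|A|)$.

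This is exactly the paper's proof: it splits $\cE_2^A$ by $|E\cap A'|$ rather than by membership in $\cE_2^{A,t}$. If you want to keep your split, the same link bound repairs it. Each member of $\cE_2^{A,t}$ inside $A\setminus A'$ shares its core pair $uv$ with some hyperedge $uvx$, $x\in A'$. Each such $uvx$ is charged at most $t-1$ times, since that is the codegree of $uv$ inside $A\setminus A'$. So these contribute only $O(t^2n|A'|)=o(|\cM|)$.
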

    \begin{proof}
    We only give an upper bound on the size of $\cE_2^A $, since the other two cases are similar.
    Let $\cE_2^{A'_{\geq 2}} =\{E\in \cE_2^A : |E\cap A'|\geq 2\}$.
    For every $u\in A'$, note that $d_{\cM}(u)\geq 2\delta n^2$, and the number of hyperedges in $\cE_2^{A'_{\geq 2}} $ that contain $u$ is at most $|A'|\cdot n\leq \frac{1}{2}\delta^2n^2$. We have
    \begin{equation}\label{eq: E2 A' geq 2}
        \frac{|\cE_2^{A'_{\geq 2}} |}{|\cM |}\leq \frac{\frac{1}{2}\delta^2n^2}{2\delta n^2}=\frac{1}{4}\delta.
    \end{equation}
    Next, we consider the hyperedges in $\cE_2^A $ that contain exactly one vertex in $A'$, denoted by $\cE_2^{A'_{=1}} $. 
    We focus on the link graph of a fixed $u\in A'$ with hyperedges in $\cE_2^{A'_{=1}} $, which is denoted by $L(\cE_2^{A'_{=1}}(u))$, and shortened by $L_u$. 
    Let $e(L_u)=|\cE_2^{A'_{=1}}(u)|$. We claim that there is no vertex in $L_u$ with degree at least $t$. Otherwise, there exists a $t$-sunflower containing $u$ such that the other $t+1$ vertices are in $A\setminus A'$, a contradiction with Claim \ref{claim: sunflower intersect A'}.
    As a result, $e(L_u)\leq (t-1)n$. Then we have
    \begin{equation}\label{eq: E2 A' =1}
        \frac{|\cE_2^{A'_{=1}}|}{|\cM |}\leq \frac{(t-1)n}{2\delta n^2}<\delta.
    \end{equation}
    According to Claim \ref{claim: sunflower intersect A'}, there is no $t$-sunflower with hyperedges in $\cE_2^A \setminus((\cE_2^{A'_{=1}} )\cup\cE_2^{A'_{\geq 2}} )$. Hence, we have
    $$|\cE_2^A \setminus((\cE_2^{A'_{=1}} )\cup\cE_2^{A'_{\geq 2}} )|\leq S^t(|A|).$$
    Together with (\ref{eq: E2 A' geq 2}) and (\ref{eq: E2 A' =1}), we have $$|\cE_2^A |\leq 2\delta\cdot |\cM |+S^t(|A|).$$ By symmetry, the same bound holds for $|\cE_2^B |$ and $|\cE_2^C|$, thus the claim holds.
    \end{proof}
Next, we deal with $\cE_1^{A,B} $.
We divide $\cE_1^{A,B} $ into three parts.
$$\cE_{1,1}^{A,B} =\{E\in \cE_1^{A,B} :|E\cap A'|=2,\text{~and~} |E\cap B|=1\}.$$
$$\cE_{1,2}^{A,B} =\{E\in \cE_1^{A,B} :|E\cap A'|=1, |E\cap (A\setminus A')|=1,\text{~and~} |E\cap B|=1\}.$$
$$\cE_{1,3}^{A,B} =\{E\in \cE_1^{A,B} :|E\cap (A\setminus A')|=2,\text{~and~} |E\cap B|=1\}.$$
Recall that $|A'|\leq \frac{1}{2}\delta ^2 n,$ and for every $u\in A'$, the number of hyperedges in $\cE_{1,1}^{A,B} $ that contain $u$, which is denoted by $|\cE_{1,1}^{A,B} (u)|=d_{\cE_{1,1}^{A,B}}(u)$, is at most $|A'|\cdot n\leq \frac{1}{2}\delta^2 n^2$.
For every $u\in A'$, according to the definition of $A'$, we have $d_{\cM }(u)\geq 2\delta n^2$,
thus we have 
 \begin{equation}\label{eq: E1 1 A B}
    \frac{|\cE_{1,1}^{A,B} |}{|\cM|}\leq \frac{\frac{1}{2}\delta^2 n^2}{2\delta n^2}=\frac{1}{4}\delta.
 \end{equation}
 Next, we deal with $\cE_{1,2}^{A,B} $.
 \begin{claim}\label{claim: E1 2 A B}
    We have 
    $$|\cE_{1,2}^{A,B} |\leq \delta^{\frac{1}{3}} \cdot |\cM |.$$
 \end{claim}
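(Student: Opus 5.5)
We need to bound $\cE_{1,2}^{A,B}$, the set of hyperedges $uab$ with $u\in A'$, $a\in A\setminus A'$ and $b\in B$, by $\delta^{1/3}|\cM|$. Since every $u\in A'$ has $d_\cM(u)\ge 2\delta n^2$, we have $|\cM|\ge \frac13\sum_{u\in A'}d_\cM(u)$. It therefore suffices to show that each $u\in A'$ lies in at most about $\delta^{4/3}n^2$ hyperedges of $\cE_{1,2}^{A,B}$, or more precisely at most $\frac{1}{3}\delta^{1/3}d_\cM(u)$ of them. The plan is to fix $u\in A'$ and study the bipartite link graph $L_u=\{ab: a\in A\setminus A',\, b\in B,\, uab\in\cG\}$.

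The key structural step is to find a copy of $\cF_5(f_3;t)$ from few edges of $L_u$. Here $u$ and $a$ play the roles of $f_1,f_2$, and $b$ plays $f_4$, or some relabelling of these. Suppose $b\in B$ has $t$ distinct neighbours $a_1,\dots,a_t\in A\setminus A'$ in $L_u$. The hyperedges $ua_ib$ form a $t$-sunflower with core $\{u,b\}$. We then need a vertex pair $x,y$ with $xya_i\in\cG$ for all $i$ and $xyb'$ for some suitable $b'$. This is not quite the right shape, so the alternative is to use the pair $\{u,a\}$ as the core $f_1f_2$.

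Concretely, take $a\in A\setminus A'$ with $d_{L_u}(a)\ge t$, so there are $b_1,\dots,b_t\in B$ with $uab_i\in\cG$; these play $f_3$-copies. We want $f_4$ to be some $c\in C$ with $uac\in\cG$, and $f_5$ to be a vertex $w$ with $b_icw\in\cG$ for all $i$. If the $b_i$ lie outside $B'$, then since $a\in A\setminus A'$ has few missing hyperedges, a common neighbour pair is easy to find. The harder part is $uac\in\cG$, because $u\in A'$ may miss most of $A\times C$.

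So instead I would use the pairs $(a,b)$ directly. Take $f_1=a$, $f_2=b$ and $f_3$-copies $u,u_2,\dots$. This needs several vertices of $A'$, which is not available for a single $u$.

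The cleaner route is degree counting in $L_u$. The edges of $L_u$ incident to $B'$ number at most $|B'|n\le\frac12\delta^2n^2$, which is at most $\frac14\delta d_\cM(u)$, so these are fine. For an edge $ab$ with $a\in A\setminus A'$ and $b\in B\setminus B'$, I claim that for most $c\in C$, $ubc$ is a missing hyperedge, as follows. If $uab,ubc\in\cG$, then $\{u,b\}$ is the core $f_1f_2$ with $f_3=a$ and $f_4=c$. Then $a$ and $c$ have about $n/3$ common neighbours, via some $b'\in B\setminus B'$ with $ab'c\in\cG$, giving only $\cF_5$ itself.

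To get $t$ copies of $f_3$, I need $t$ neighbours $a_1,\dots,a_t$ of $b$ in $L_u$, all outside $A'$, together with $c$ such that $ubc\in\cG$. A vertex $w\in B\setminus B'$ with $a_icw\in\cG$ for all $i$ exists whenever $c$ is typical, by the codegree/proper-pair bounds. Therefore, if $d_{L_u}(b)\ge t$, then $ubc\notin\cG$ for all but $O(\epsilon^{1/2}n)$ vertices $c\in C$. This contributes roughly $|C|$ missing hyperedges at $u$ for each such $b$.

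Vertices $b$ with $d_{L_u}(b)<t$ contribute at most $tn$ edges to $L_u$. Hence $|\cE_{1,2}^{A,B}(u)|\le tn+\frac12\delta^2n^2+\#\{b:d_{L_u}(b)\ge t\}\cdot n$. Moreover, $d_\cM(u)\ge \#\{b\}\cdot(n/3-2\epsilon^{1/2}n)$, and the missing hyperedges $ubc$ are distinct for distinct $b$. Combining these, $|\cE_{1,2}^{A,B}(u)|\le 4d_\cM(u)$, which is too weak by itself.

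The main obstacle is turning this constant factor into $\delta^{1/3}$. For that, I would strengthen the count by using $a\in A\setminus A'$ as well. Each missing hyperedge $ubc$ is forced for every $b$ of high $L_u$-degree, but the edges $ab$ of $L_u$ must be charged more efficiently. The sharper claim is that the set $B_u$ of high-degree $b$ has size at most $O(\delta^{4/3} n)$ when compared against $d_\cM(u)$.

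To get this, I would combine the forced missing hyperedges with the good-hyperedge maximality of the partition. Moving $u$ to $B$ or $C$ must not increase the number of good hyperedges, so $d_{\cG_g}(u)$ is at least the number of $uab$-type hyperedges. Together with $d_\cM(u)\ge|B_u|(n/3-o(n))$, this yields $|\cG_g(u)|\le |B||C|-|B_u|n/3$. Hence $|\cE_{1,2}^{A,B}(u)|\le |B_u|n\le 3(|B||C|-|\cG_g(u)|)\cdot$(small factor). I expect this balancing step to need the most care, and I would try first to carry out the argument with the explicit choice $\delta<\frac13(\frac1{18})^3$ to absorb constants into $\delta^{1/3}$.
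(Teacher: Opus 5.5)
There is a genuine gap exactly at the step you flag as needing the most care. The final ``balancing'' inequality $|B_u|n\le 3(|B||C|-|\cG_g(u)|)\cdot(\text{small factor})$ has no justification, and it cannot be obtained from the ingredients you have. You have two facts. The first is your sunflower-at-$\{u,b\}$ lemma: each $b$ with $d_{L_u}(b)\ge t$ has few neighbours in the good link of $u$. (Incidentally, the exceptional set there is $O(t\delta n)$, coming from $d_{\cM}(a_i)\le 2\delta n^2$, not $O(\epsilon^{1/2}n)$; properness is only a global count and gives nothing for pairs through a fixed $a_i$.) The second is maximality, $d_{\cG_g}(u)\ge|\cE_{1,2}^{A,B}(u)|$. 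Both facts hold if half of $B$ is joined in $L_u$ to all of $A\setminus A'$ while the other half of $B$ is completely joined to $C$ in the good link of $u$. In that configuration $|\cE_{1,2}^{A,B}(u)|$, $d_{\cG_g}(u)$ and $d_{\cM}(u)$ are all about $n^2/18$. So these facts only ever give $|\cE_{1,2}^{A,B}(u)|=O(d_{\cM}(u))$, which you already found, and no factor $\delta^{1/3}$ can come out of them.

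The missing idea is to let $u$ play $f_4$, with the $t$ copies of $f_3$ taken in $C$. Your third attempt put $u$ among the $f_3$-copies, which is why it seemed to need many vertices of $A'$.

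Suppose $|\cE_{1,2}^{A,B}(u)|>3\epsilon^{1/2}n^2$. Then some edge $ua_0b_0$ of $\cE_{1,2}^{A,B}(u)$ has $(a_0,b_0)$ proper, so $a_0b_0c\in\cG$ for all but at most $\epsilon^{1/2}n$ vertices $c\in C$. Suppose some $b_1\ne b_0$ had at least $\epsilon^{1/2}n+t$ neighbours in the good link of $u$. Then $t$ of them, $c_1,\dots,c_t$, would satisfy $a_0b_0c_i\in\cG$. The hyperedges $a_0b_0c_i$, $a_0b_0u$, $c_iub_1$ ($1\le i\le t$) would then form a copy of $\cF_5(f_3;t)$ with $\{f_1,f_2\}=\{a_0,b_0\}$, $f_4=u$, $f_5=b_1$.

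Hence $d_{\cG_g}(u)\le |B|(\epsilon^{1/2}n+t)+n\le \epsilon^{1/2}n^2$, and maximality gives $|\cE_{1,2}^{A,B}(u)|\le\epsilon^{1/2}n^2$. So in every case $|\cE_{1,2}^{A,B}(u)|\le 3\epsilon^{1/2}n^2=3\delta^{3/2}n^2\le\delta^{4/3}n^2\le\frac12\delta^{1/3}d_{\cM}(u)$, using $\delta<3^{-6}$. Summing over $u\in A'$ gives the claim, since each missing hyperedge has exactly one vertex in $A$. This is the paper's argument: it uses one proper edge of $L_u$ together with a high-degree vertex of the good link of $u$, and your sunflower lemma is not needed.
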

 \begin{proof}
    Let $\cE_{1,2}^{A,B} (u)$ denote the hyperedges in $\cE_{1,2}^{A,B} $ that contain $u\in A'$. 
 If for every $u\in A'$ we have $|\cE_{1,2}^{A,B} (u)|\leq \delta^{\frac{4}{3}} n^2$,
   then we have
    $$\frac{|\cE_{1,2}^{A,B} |}{|\cM |}\leq \frac{\delta^{\frac{4}{3}} n^2}{2\delta n^2}<\delta^{\frac{1}{3}},$$
    and we are done.

    Therefore, we may suppose that for some $u\in A'$, $|\cE_{1,2}^{A,B} (u)|\geq \delta^{\frac{4}{3}} n^2$.
    Recall that the improper pairs in $B\times C$ are the pairs $(b,c)$ such that $d_{\cM }(\{b,c\})\geq \epsilon^{\frac{1}{2}} n$.
     By Section \ref{sec: notation}, the number of improper pairs in $A\times B$ is at most $3\epsilon^{1/2} n^2<\epsilon^{4/9}n^2=\delta^{\frac{4}{3}} n^2$.
    Thus, there exists a proper pair $(a_0,b_0)\in A\times B$ such that $a_0ub_0\in \cE_{1,2}^{A,B} (u)$, which implies $d_{\cM }(\{a_0,b_0\})\leq \epsilon^{1/2} n$.
    Since we supposed that $\cG$ has the maximum number of good hyperedges, we have $d_{\cG_g}(u)\geq d_{\cE_{1,2}^{A,B} }(u)\geq \delta^{\frac{4}{3}} n^2$.
    Consider the link graph $L(\cG_g(u))$, which is a bipartite graph with parts $B$ and $C$ and has at least $\delta^{\frac{4}{3}} n^2$ edges.
    Then, there exists $b_1\in B$ such that $$d_{L(\cG_g(u))}(b_1)\geq \frac{\delta^{\frac{4}{3}} n^2}{|B|}\geq 2\delta^{\frac{4}{3}} n.$$
    We have $d_{L(\cG_g(u))}(b_1)-d_{\cM }(\{a_0,b_0\})\geq 2\delta^{\frac{4}{3}} n-\epsilon^{1/2} n>t$, when $\epsilon$ is sufficiently small.
    Thus, there exist $t$ vertices $\{c_1,\dots,c_t\}\subseteq C$ such that $ub_1c_i,a_0b_0c_i\in E(\cG)$ for every $1\leq i\leq t$.
     They form a copy of $\cF_5(f_3;t)$ in $\cG$, a contradiction.
 \end{proof}
Finally, we deal with $\cE_{1,3}^{A,B} $. For every $E\in \cE_{1,3}^{A,B} $, suppose $E=a_1a_2b$ with $a_1,a_2\in A\setminus A'$ and $b\in B$.
We divide hyperedges $E$ in $\cE_{1,3}^{A,B} $ into two parts according to the degree of $b$.
$$\cF_1=\{E=a_1a_2b\in \cE_{1,3}^{A,B} : |\cE_{1,3}^{A,B} (b)|\leq \delta n^2\},$$
$$\cF_2=\{E=a_1a_2b\in \cE_{1,3}^{A,B} : |\cE_{1,3}^{A,B} (b)|>\delta n^2\}.$$
\begin{claim}\label{claim: bound of F1}
    We have 
    $$|\cF_1|\leq 2\delta^{\frac{1}{2}} \cdot |\cM |.$$
\end{claim}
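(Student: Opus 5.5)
Fix an edge $E=a_1a_2b\in\cF_1$ with $a_1,a_2\in A\setminus A'$ and $b\in B$. The plan is to charge $E$ to missing hyperedges through $b$, because $b$ cannot be a vertex with almost full good link. The charging goes in three steps.

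\textbf{Step 1 (forbidden good triples).} First I show that the edge $E$ forbids many good hyperedges through $b$. Since $a_1,a_2\in A\setminus A'$, each misses at most $2\delta n^2$ good triples. Hence the common good link $L(\cG_g(a_1))\cap L(\cG_g(a_2))$ contains all but at most $4\delta n^2$ pairs of $B\times C$. In $\cF_5(f_3;t)$ we let $a_1,a_2$ play $f_1,f_2$, let $b$ play $f_4$, and take $t$ copies of $f_3$ together with $f_5$. Then a copy arises as soon as there are $c\in C$ and $t$ vertices $b_1,\dots,b_t\in B$ with the following property: $a_1a_2b_i\in\cG$, and $b_i c\, b$ forms a hyperedge. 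Hyperedges of the form $bb_ic$ are not good, so this embedding is unsuitable. The cleaner embedding uses the roles of the $\cF_5(f_3;t)$ edges $f_1f_2f_3^{(i)}$, $f_1f_2f_4$, $f_3^{(i)}f_4f_5$. Put $f_1=b$, $f_2=a_1$, $f_4=a_2$, so that $f_1f_2f_4=E$. Choose $f_3^{(i)}=c_i\in C$ with $ba_1c_i$ good, and choose $f_5=a''\in A$ or $b''\in B$ with $c_ia_2f_5$ good. So I would take $f_5=b'\in B\setminus\{b\}$, and it suffices to have $t$ vertices $c_i\in L(\cG_g(b))\cap N(a_1)$ such that $a_2b'c_i\in\cG$ for a common $b'$. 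The vertex $a_2$ is not in $A'$, so for all but few $b'$ the link of $a_2b'$ in $C$ is almost all of $C$. Hence if $b$ lies in at least $\Omega(\delta^{1/2}n)$ good hyperedges $ba_1c$, we find $t$ such $c_i$ and a common $b'$, a contradiction. Consequently, for each edge $a_1a_2b\in\cF_1$, the pair $a_1b$ (and likewise $a_2b$) has good codegree $d_{\cG_g}(\{a_1,b\})\le \delta^{1/2}n$, that is, $d_{\cM}(\{a_1,b\})\ge |C|-\delta^{1/2}n$.

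\textbf{Step 2 (charging).} Let $P_b$ denote the set of vertices $a\in A\setminus A'$ lying in some edge of $\cF_1$ containing $b$. Step~1 gives $d_\cM(b)\ge |P_b|(|C|-\delta^{1/2}n)\ge |P_b|\,n/4$. On the other hand, $|\cF_1(b)|\le \min\{\delta n^2,\binom{|P_b|}{2}\}\le \delta^{1/2}n|P_b|$, using $|P_b|\le n$ and the degree condition $|\cE_{1,3}^{A,B}(b)|\le\delta n^2$ defining $\cF_1$. More precisely, $\binom{p}{2}\le \delta n^2$ forces the bound $\binom{p}{2}\le p\sqrt{\delta}n$. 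Thus $|\cF_1(b)|\le 4\delta^{1/2}d_\cM(b)$. Each missing hyperedge contains exactly one $b$, so summing over $b\in B$ yields $|\cF_1|\le 4\delta^{1/2}|\cM|$. To reach the stated constant $2$, I would tighten the estimate to $|C|-\delta^{1/2}n\ge n/4$ and use $\binom p2\le p\sqrt{\delta/2}\,n$.

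\textbf{Main obstacle.} The delicate part is Step~1: setting up the correct embedding of $\cF_5(f_3;t)$ in which $E$ plays $f_1f_2f_4$ and every other edge used is good. It also requires handling the few exceptional $b'$ and improper pairs, which I would absorb via the $\epsilon^{1/2}$ properness counts from Section~\ref{sec: notation}.
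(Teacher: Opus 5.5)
Your proof follows the paper's argument. Step~1 is the same embedding with the roles of $a_1,a_2$ swapped. The paper takes $\{a_2,b\}$ as $f_1f_2$, $a_1$ as $f_4$, and $f_5=b_1\in B$ of maximal good codegree with $a_1$, which gives the slightly sharper $d_{\cM}(\{a_i,b\})>\frac13 n-13\delta n$. Step~2 is the same charging of $\cF_1(b)$ to the missing hyperedges through $b$, via the vertex set $Y_b=P_b$ of the link of $b$ and the bound $|\cF_1(b)|\le\delta n^2$. What your Step~2 actually proves, $|\cF_1|\le 4\delta^{1/2}|\cM|$, is correct.

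Your last sentence, however, does not reach the constant $2$. With $d_{\cM}(b)\ge |P_b|\,n/4$ and $|\cF_1(b)|\le |P_b|\sqrt{\delta/2}\,n$ you get $4\sqrt{\delta/2}=2\sqrt2\,\delta^{1/2}$, not $2\delta^{1/2}$. This charging cannot do better than about $\frac{3}{\sqrt2}\delta^{1/2}\approx 2.12\,\delta^{1/2}$, because $d_{\cM}(\{a,b\})\le |C|\le(\frac13+\epsilon)n$. The worst case is $|P_b|\approx\sqrt{2\delta}\,n$ with a complete link.

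The paper's own computation has the same problem. Its final expression $\frac{\delta^{1/2}}{\sqrt2(\frac13-13\delta)}$ exceeds $2\delta^{1/2}$, so its concluding ``$<2\delta^{\frac12}$'' is an arithmetic slip. This does not matter for the application, which only uses $|\cE_1^{A,B}|\le 3\delta^{1/3}|\cM|$; any $O(\delta^{1/2})$ bound suffices there.

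If you want the literal constant, borrow the argument of the next claim ($\cF_2=\emptyset$). Suppose some $a_0\in P_b$ had $t$ neighbours $a_1,\dots,a_t$ in the link of $b$. Take a good pair $b_0c_0$ with $b_0\neq b$ in the common good link of $a_0,\dots,a_t$. Then $f_1f_2=b_0c_0$, $f_4=a_0$, $f_5=b$ gives a copy of $\cF_5(f_3;t)$. Hence that link has maximum degree less than $t$, so $|\cF_1(b)|\le\frac{t-1}{2}|P_b|\le\frac{2(t-1)}{n}d_{\cM}(b)$. Summing over $b$ gives $|\cF_1|\le\frac{2(t-1)}{n}|\cM|\le 2\delta^{1/2}|\cM|$ for large $n$.
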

\begin{proof}
     Since $a_1\in A\setminus A'$, we have $d_{\cG_g}(a_1)\geq |B|\cdot |C|-2\delta n^2\geq (\frac{1}{3}-\epsilon)^2n^2\geq (\frac{1}{3}n)^2-3\delta n^2$.
    Then, by considering the link graph $L(\cG_g(a_1))$, there exists $b_1\in B$ such that $d_{L(\cG_g(a_1))}(b_1)\geq \frac{1}{3}n-12\delta n$.
    If $d_{\cM }(\{a_2,b\})\leq \frac{1}{3}n-12\delta n-t$, then there exist $t$ vertices $\{c_1,\dots,c_t\}\subseteq C$ such that $a_1b_1c_i,a_2bc_i\in E(\cG)$ for every $1\leq i\leq t$.
    They form a copy of $\cF_5(f_3;t)$ in $\cG$, a contradiction.
    Thus, we have $d_{\cM }(\{a_2,b\})>\frac{1}{3}n-12\delta n-t\geq \frac{1}{3}n-13\delta n$.
    By symmetry, we also have $d_{\cM }(\{a_1,b\})>\frac{1}{3}n-13\delta n$.
    Let $$Y_b=\bigcup_{E\in \cE_{1,3}^{A,B} (b)}E\cap (A\setminus A').$$
    Then, $Y_b$ is the vertex set of the link graph of $b$ in the hypergraph $\cE_{1,3}^{A,B} (b)$, hence
    the size of $Y_b$ is at least $\sqrt{2|\cE_{1,3}^{A,B} (b)|}$, 
    and for every $y\in Y_b$, we have $d_{\cM }(\{y,b\})>\frac{1}{3}n-13\delta n$.
    So we have
    \begin{equation}\label{eq: dM(b) for b in E13AB}
       d_{\cM }(b)\geq |Y_b|\left(\frac{1}{3}n-13\delta n\right)\geq \sqrt{2|\cE_{1,3}^{A,B} (b)|}\left(\frac{1}{3}n-13\delta n\right). 
    \end{equation} 
    Since $|\cE_{1,3}^{A,B}(b)|\leq \delta n^2$, we have for every $b\in B$ with $|\cE_{1,3}^{A,B} (b)|\leq \delta n^2$,
    $$\frac{|\cF_1|}{|\cM |}\leq \frac{\sqrt{|\cE_{1,3}^{A,B} (b)|}}{\sqrt{2}\left(\frac{1}{3}n-13\delta n\right)}\leq \frac{\delta^{\frac{1}{2}}}{\sqrt{2}\left(\frac{1}{3}-13\delta\right)}<2\delta^{\frac{1}{2}}.$$
\end{proof}
  \begin{claim}\label{claim: size of F2}
    We have 
    $$\cF_2=\emptyset.$$
  \end{claim}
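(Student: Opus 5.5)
We argue by contradiction: suppose some $E=a_1a_2b\in\cF_2$, so $b\in B$ satisfies $|\cE_{1,3}^{A,B}(b)|>\delta n^2$. The plan is to turn this large link of $b$ inside $A\setminus A'$ into a copy of $\cF_5(f_3;t)$ with $b$ playing $f_5$, the pair $\{f_1,f_2\}$ consisting of two vertices of $A$ and one more vertex, and the blown-up $f_3$'s forming a $t$-star.

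\textbf{Step 1: structure of the link of $b$.} Let $L_b$ be the link graph of $b$ restricted to $\cE_{1,3}^{A,B}(b)$. It lives on $A\setminus A'$ and has more than $\delta n^2$ edges. By the argument in the proof of Claim \ref{claim: bound of F1}, every $y\in Y_b$ satisfies $d_{\cM}(\{y,b\})>\frac13 n-13\delta n$. Since $|C|\le \frac13 n+\epsilon n$, this means $y$ and $b$ have at most about $13\delta n+\epsilon n$ common good neighbours $c\in C$. Hence $b$ sends at most $O(\delta n^2)$ good edges into $Y_b\times C$.

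\textbf{Step 2: $b$ has large good degree.} By the maximality of good hyperedges, $d_{\cG_g}(b)\ge|\cE_{1,3}^{A,B}(b)|>\delta n^2$; otherwise moving $b$ to $A$ would increase the number of good hyperedges. I would first try to locate the good edges of $b$ in $(A\setminus Y_b)\times C$. Since $|Y_b|\ge\sqrt{2\delta}\,n$ and few good edges go to $Y_b$, we have $d_{\cG_g}(b)\lesssim |A\setminus Y_b|\,|C|+O(\delta n^2)$. I would then refine the maximality comparison. Moving $b$ to $A$ gains roughly $e(L_b)$ plus other pairs of $B\times C$, so $d_{\cG_g}(b)\ge e(L_b)+(\text{good edges } b\text{ would have in }B\times C)$. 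Together with $d_{\cM}(b)$ being large, this should give an inequality forcing $e(L_b)$ to be small. If this counting alone does not close, I would fall back on the direct embedding in Step 3.

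\textbf{Step 3: direct embedding.} Since $L_b$ has more than $\delta n^2$ edges on fewer than $n/2$ vertices, it contains a vertex $a$ of degree at least $2\delta n$. Take $t$ of its neighbours $a^{(1)},\dots,a^{(t)}\in A\setminus A'$, so that $aba^{(i)}\in\cG$ for all $i$. Now choose $u\in A\setminus A'$ distinct from these vertices, together with $v\in B$ and $w\in C$, as follows. Because each $a^{(i)}$ and $a$ lies in $A\setminus A'$, the common good link $\bigcap_i L(\cG_g(a^{(i)}))\cap L(\cG_g(a))$ has at least $|B||C|-2\delta(t+1)n^2$ edges. So there is a pair $vw\in B\times C$ with $a^{(i)}vw\in\cG$ for every $i$ and $avw\in\cG$. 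Then $\{v,w\}$ plays $\{f_1,f_2\}$, the vertices $a^{(i)}$ play the blown-up $f_3$, $a$ plays $f_4$, and $b$ plays $f_5$. This uses the hyperedges $f_1f_2f_3^{(i)}=vwa^{(i)}$, $f_1f_2f_4=vwa$, and $f_3^{(i)}f_4f_5=a^{(i)}ab$, all of which are present, giving a copy of $\cF_5(f_3;t)$. We must ensure $v\ne b$, which costs only at most $n$ pairs. This already yields the contradiction, just as in Claim \ref{claim: sunflower intersect A'}; in fact only a $t$-star in $L_b$ is needed.

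The main obstacle is simply checking the distinctness and the degree count. If the direct embedding works, $\cF_2=\emptyset$ follows at once without Steps 1 and 2.
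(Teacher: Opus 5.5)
Your Step~3 is exactly the paper's proof: a $t$-star $a\,a^{(1)},\dots,a\,a^{(t)}$ in the link of $b$ inside $A\setminus A'$, and a common good pair $vw$ for these $t+1$ vertices of $A\setminus A'$, with $b,a,a^{(i)},\{v,w\}$ playing $f_5,f_4,f_3^{(i)},\{f_1,f_2\}$. You also check $v\neq b$, which the paper leaves implicit. Drop Steps~1--2 entirely: the maximality remark in Step~2 is not valid, because moving $b$ to $A$ (or to $C$) would not make the hyperedges $a_1a_2b$ good.
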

\begin{proof}
    For every $b\in B$ with $|\cE_{1,3}^{A,B} (b)|>\delta n^2$, we consider the link graph $L(\cE_{1,3}^{A,B} (b))$, which is a graph with vertex set $A\setminus A'$.
    Then, since $e(L(\cE_{1,3}^{A,B} (b)))>\delta n^2$, there exists a vertex $a_0\in A\setminus A'$ such that $d_{L(\cE_{1,3}^{A,B} (b))}(a_0)\geq t$, which implies that there exist $t$ vertices $\{a_1,\dots,a_t\}\subseteq A\setminus A'$ such that $a_0a_ib\in \cE_{1,3}^{A,B} (b)$ for every $1\leq i\leq t$.

    Since $\{a_0,a_1,\dots,a_t\}\subseteq A\setminus A'$, we can similarly find a pair $b_0c_0$ with $b_0\in B,c_0\in C$ such that $a_0b_0c_0\in E(\cG)$ and $a_ib_0c_0\in E(\cG)$ for every $1\leq i\leq t$.
    They form a copy of $\cF_5(f_3;t)$ in $\cG$, a contradiction.
    Thus, $\cF_2=\emptyset$.
\end{proof}
Combining Claim \ref{claim: bound of F1} and Claim \ref{claim: size of F2}, we have
\begin{equation}\label{eq: E1 3 A B}
   |\cE_{1,3}^{A,B} |\leq 2\delta^{\frac{1}{2}}\cdot |\cM | .
\end{equation}
Combining (\ref{eq: E1 1 A B}), Claim \ref{claim: E1 2 A B} and (\ref{eq: E1 3 A B}), we have
$$|\cE_1^{A,B} |\leq 3\delta^{\frac{1}{3}}\cdot |\cM |.$$
By symmetric argument, we have all of $\cE_1^{A,C} , \cE_1^{B,A} , \cE_1^{B,C} ,\cE_1^{C,A} ,\cE_1^{C,B} $ satisfy the same bound.
As a result, we have 
$$|\cE_1 |\leq 18\delta^{\frac{1}{3}}|\cM |.$$
Together with Claim \ref{eq: upper bound of E2 in F5f3t}, the inequality (\ref{eq: goal in F5f3t}) holds when $\delta<\frac{1}{3}(\frac{1}{18})^3$. This completes the proof.
\end{proof}

\section{Tur{\'a}n number of \texorpdfstring{$(t+1)\cdot \cF_5^S(m)$}{t F\_5^S(m)} }\label{sec: section 6}
In this section, we study the Tur\'an number of the hypergraph 
that consists of $t+1$ vertex-disjoint copies of $\cF_5^S(m)$.
We rewrite Theorem \ref{thm: t cF_5^S(m)} here for convenience.

\noindent\textbf{Theorem.}
{\it
    For every $m\geq 2$, when $n$ is sufficiently large, the following hypergraph $\cG(t,m,n)$ is the extremal hypergraph of $(t+1)\cdot \cF_5^S(m)$ with maximum number of hyperedges.

    Fix a set $T$ with $t$ vertices, and a hypergraph $T_3(n-t,3)$ disjoint with $T$.
Let $\cG(t,m,n)$ denote the $3$-uniform $n$-vertex hypergraph obtained by adding all the hyperedges intersecting $T$ to the union of $T$ and $T_3(n-t,3)$. 

}

\smallskip

\begin{proof}
    Let $\cG$ be the extremal $3$-uniform hypergraph for $(t+1)\cdot \cF_5^S (m)$. For $\epsilon >0$, let $A,B,C$ be a partition of $V(\cG)$ obtained from Theorem \ref{thm: blow-up sta}.
    The definition of $A',B',C'$ and $\cM $, $\cE $ are the same as in Section \ref{sec: notation}.
    Our aim is to prove that all but at most $4t\delta^{\frac{1}{2}}|\cM |$ hyperedges in $\cE $ intersect with $t$ fixed vertices. Then we have that the upper bound on the number of hyperedges in $\cG$ is at most the number of hyperedges in a three-partite complete $3$-uniform hypergraph plus the number of extra hyperedges intersecting with $t$ fixed vertices.
    Finally, we will prove $\cG(t,m,n)$ achieves the extremal number. 

    We call a hyperedge $E\in \cE$ \textbf{based} if $|E\cap (A\setminus A')|\geq 2$, or $|E\cap (B\setminus B')|\geq 2$, or $|E\cap (C\setminus C')|\geq 2$. Other hyperedges are called \textbf{unbased}. We set $X=A'\cup B'\cup C'$.
    Then, there are two types of unbased hyperedges in $\cE $ that contain at least two vertices in $A$.
    $$\cE_1 =\{E=a_1a_2x\in \cE : a_1,a_2\in A,|E\cap X|\geq 2\}.$$
    $$\cE_2 =\{E=a_1a_2x\in \cE : a_1\in A',a_2\in A\setminus A',x\in (B\cup C)\setminus(B'\cup C')\}.$$
    According to the definition of proper pairs, we divide $\cE_2 $ into two parts.
    $$\cE_2^1 =\{E=a_1a_2x\in \cE_2 : a_1\in A', (a_2,x) \text{~is a proper pair}\},$$
    $$\cE_2^2 =\{E=a_1a_2x\in \cE_2 : a_1\in A', (a_2,x) \text{~is an improper pair}\}.$$
    \begin{clm}\label{clm: upper bound of E22}
        When $n$ is sufficiently large, we have
        $|\cE_1 |\leq \delta |\cM |$, and $|\cE_2^2 |\leq 2\delta^{\frac{1}{2}} |\cM |$.
    \end{clm}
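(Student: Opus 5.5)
For $|\cE_1|$ we will charge each hyperedge to a vertex of $X$ it contains, exactly as in the estimate (\ref{eq: E2 A' geq 2}) of Section \ref{sec: section 5}. Every $E\in\cE_1$ has $|E\cap X|\geq 2$. Fix one such vertex $u\in E\cap X$. Then $E$ is determined by $u$, a second vertex of $X$, and one arbitrary vertex, so the number of hyperedges of $\cE_1$ charged to $u$ is at most $|X|\cdot n\leq \frac32\delta^2n^2$ by (\ref{eq: size of X}). On the other hand, $u\in X$ gives $d_{\cM}(u)\geq 2\delta n^2$. Each missing hyperedge contains at most three vertices of $X$, so $\sum_{u\in X}d_{\cM}(u)\leq 3|\cM|$. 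Hence
\[
|\cE_1|\leq \sum_{u\in X}\tfrac32\delta^2 n^2\leq \frac{\frac32\delta^2n^2}{2\delta n^2}\sum_{u\in X}d_{\cM}(u)\leq \tfrac34\delta\cdot 3|\cM|.
\]
This gives a constant times $\delta|\cM|$. The constant $\frac94$ is cosmetic: it can be absorbed by charging more carefully (missing hyperedges are good, so each contains at most one vertex of each of $A',B',C'$), or by noting that $\delta$ is a free small parameter in the statement.

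For $\cE_2^2$ we will use two facts. Every $E=a_1a_2x\in\cE_2^2$ has $a_1\in A'$, and its remaining pair $(a_2,x)$ is improper, i.e. $d_{\cM}(\{a_2,x\})>\epsilon^{1/2}n$. The number of improper pairs is at most $3\epsilon^{1/2}n^2$. Fix $a_1\in A'$. The hyperedges of $\cE_2^2$ containing $a_1$ correspond injectively to improper pairs, so there are at most $3\epsilon^{1/2}n^2=3\delta^{3/2}n^2$ of them. Comparing with $d_{\cM}(a_1)\geq 2\delta n^2$ gives
\[
|\cE_2^2|\leq \sum_{a_1\in A'}3\delta^{3/2}n^2\leq \frac{3\delta^{3/2}n^2}{2\delta n^2}\sum_{a_1\in A'}d_{\cM}(a_1)\leq \tfrac32\delta^{1/2}|\cM|.
\]
The last step uses that each missing hyperedge contains at most one vertex of $A$, hence at most one vertex of $A'$. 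This is already at most $2\delta^{1/2}|\cM|$.

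The main obstacle is ensuring these per-vertex charges are not multiply counted. For $\cE_2^2$ this is clean, since each missing hyperedge meets $A'$ at most once. For $\cE_1$ a missing hyperedge may meet $X$ in up to three vertices, one in each of $A',B',C'$; that only costs a constant factor, which is harmless because $\delta$ can be shrunk. If a sharper constant were needed, one could alternatively bound $|\cE_1|\leq |X|^2 n\leq \frac94\delta^4 n^3$ and compare with $|\cM|\geq \frac13\sum_{u\in X}d_{\cM}(u)\geq \frac23\delta n^2|X|$. However, that comparison fails when $X$ is small, so the per-vertex charging above is the route I would take.
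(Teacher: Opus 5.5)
\textbf{Verdict: the $\cE_2^2$ half is correct, but the $\cE_1$ half has a small genuine gap and does not prove the bound as stated.}

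\textbf{The gap in $\cE_1$.} You charge each hyperedge to an arbitrary vertex of $E\cap X$ and sum $d_{\cM}(u)$ over all of $X$. That triple-counts missing hyperedges and gives only $|\cE_1|\le\frac94\delta|\cM|$, not $|\cE_1|\le\delta|\cM|$. Neither of your suggested repairs works.
\begin{itemize}
\item The remark that a missing hyperedge meets each of $A',B',C'$ at most once just restates that it meets $X$ at most three times. Summing the three per-part bounds still gives $3\cdot\frac34\delta$.
\item $\delta$ is not a free parameter of the claim. It is the same $\delta$ that defines $A',B',C'$, hence $X$ and $\cE_1$ itself. Shrinking it changes these sets, and your argument would again give $\frac94$ times the new $\delta$.
\end{itemize}
(Your side remark that the comparison $|X|^2n$ versus $\frac23\delta n^2|X|$ ``fails when $X$ is small'' is also off. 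If you do not substitute $|X|\le\frac32\delta^2n$ into both factors, it gives the same $\frac94\delta$, since it is the same charging.)

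\textbf{The missing observation and the fix.} Every $E=a_1a_2x\in\cE_1$ meets $A'$: it has two vertices in $A$ and at least two in $X$, so if neither $a_1$ nor $a_2$ were in $X$ we would have $|E\cap X|\le 1$. Charge $E$ to a vertex $u\in E\cap A'$.
\begin{itemize}
\item Such a $u$ lies in at most $|X|n\le\frac32\delta^2n^2$ hyperedges of $\cE_1$.
\item Each missing hyperedge is good, so it has exactly one vertex in $A$. Hence $|\cM|\ge\sum_{u\in A'}d_{\cM}(u)\ge 2\delta n^2|A'|$.
\end{itemize}
Together these give $|\cE_1|\le\frac34\delta|\cM|<\delta|\cM|$. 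This is the paper's route. The paper writes the per-vertex count as $|A'|n$; your $|X|n$ is the accurate count, since the second $X$-vertex may be $x\in B'\cup C'$, and the conclusion is unaffected. Your weaker constant would be harmless downstream, where $\delta$ is absorbed into $3\delta^{1/2}$ with ample slack. As a proof of the claim as stated, though, the $\cE_1$ part falls short without this observation.

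\textbf{The $\cE_2^2$ part} is exactly the paper's argument. For fixed $a_1\in A'$, hyperedges of $\cE_2^2$ through $a_1$ inject into improper pairs, giving at most $3\epsilon^{1/2}n^2=3\delta^{3/2}n^2$ of them. Summing over $A'$ against $d_{\cM}(a_1)\ge2\delta n^2$ gives $\frac32\delta^{1/2}|\cM|$. You are right to count improper pairs in general: the paper's ``improper pairs in $B\times C$'' is a slip, since $(a_2,x)$ lies in $A\times B$ or $A\times C$.
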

    \begin{proof}
        Let $E=a_1a_2x\in \cE_1 $ with $a_1,a_2\in A$. Since $|E\cap X|\geq 2$, we may assume $a_1\in A'$, and then $d_{\cM }(a_1)\geq 2\delta n^2$.
        We also have $d_{\cE_1 }(a_1)\leq |A'|n\leq \frac{1}{2}\delta^2 n^2$.
        Thus, we have
        $$\frac{|\cE_1 |}{|\cM |}\leq \frac{\frac{1}{2}\delta^2 n^2}{2\delta n^2}=\frac{1}{4}\delta<\delta.$$
        
        Then, we deal with $\cE_2^2 $. 
        Since the number of improper pairs in $B\times C$ is at most $3\epsilon^{\frac{1}{2}}n^2$, we have $|\cE_2^2 (u)|\leq 3\epsilon^{\frac{1}{2}} n^2$, and by the definition of $A'$, $|d_{\cM }(u)|\geq 2\delta n^2$. 
        Thus,
        $$\frac{|\cE_2^2 |}{|\cM |}\leq \frac{3\epsilon^{\frac{1}{2}} n^2}{2\delta n^2}<2\delta^{\frac{1}{2}}.$$
\end{proof}
    \begin{claim}\label{claim: extend based edge to Sm}
        For every integer $k>0$, every $E\in \cE $ which is a based hyperedge, and a set of vertices $S\subseteq V(\cG)$ with size $k$ and disjoint with $E$, when $n$ is large enough, there exists a copy of $\cF_5^S(m)$ in $\cG$ that contains $E$ and disjoint with $S$.
    \end{claim}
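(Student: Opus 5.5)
The plan is to embed $E$ directly as the extra edge of $\cF_5^S(m)$ and to build the copy of $T_3(3m,3)$ around it from vertices of $V(\cG)\setminus X$. Recall that $\cF_5^S(m)$ consists of a $T_3(3m,3)$ together with a new vertex $v$ and one edge $v y z$, where $y,z$ lie in the same part of the $T_3(3m,3)$.

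Since $E$ is based, after relabelling parts we may write $E=yzx$ with $y,z\in A\setminus A'$ and $x$ arbitrary. If $x\in A\setminus A'$, then $E$ lies entirely inside $A$; this case is handled the same way, by letting $x$ play the role of $v$. We let $x$ play $v$, and we look for $m-2$ further vertices $a_3,\dots,a_m\in A\setminus A'$ and sets $W_B\subseteq B$, $W_C\subseteq C$ of size $m$ with the following property: every triple $a\,b\,c$ with $a\in\{y,z,a_3,\dots,a_m\}$, $b\in W_B$, $c\in W_C$ is an edge of $\cG$. Together with $E$, these vertices form $\cF_5^S(m)$. Note that $x$ need not belong to the tripartite part, so it does not matter whether $x\in X$ or which part contains it.

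The construction proceeds in the following steps.

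\textbf{Step 1.} Fix $a_3,\dots,a_m$ in $A\setminus(A'\cup S\cup E)$. This is possible because $|A\setminus A'|\ge n/3-\epsilon n-\frac12\delta^2 n$ is much larger than $k+3+m$.

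\textbf{Step 2.} Define an auxiliary bipartite graph $H$ on $(B\setminus(S\cup E))\times(C\setminus(S\cup E))$, where $bc$ is an edge if and only if $abc\in E(\cG)$ for all $m$ chosen vertices $a$. Each $a\in A\setminus A'$ misses at most $2\delta n^2$ good pairs, by the definition of $A'$. Removing $S\cup E$ costs only $O(kn)$ pairs. Hence
\[
e(H)\ge |B||C|-2m\delta n^2-O(kn)\ge \Big(\tfrac19-3\epsilon-2m\delta\Big)n^2-O(n),
\]
which is at least $\frac{1}{20}n^2$ once $\delta$ is small compared with $1/m$.

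\textbf{Step 3.} By Theorem \ref{thm: ex of Kss}, for large $n$ the graph $H$ contains a $K_{m,m}$. Its two sides give $W_B$ and $W_C$, which completes the copy of $\cF_5^S(m)$.

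The only subtlety, which I expect to be the main point, is the choice of $\delta$. In this section $\delta$ must be small enough in terms of $m$ that $2m\delta$ is negligible compared with $1/9$. This is compatible with the way $\epsilon,\delta$ are fixed via Theorem \ref{thm: blow-up sta}, since $m$ and $t$ are constants. With that in place, the argument is uniform in $S$, because $|S|=k$ is a constant and removing it affects only $O(n)$ pairs.
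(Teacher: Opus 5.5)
Your proposal is correct and follows essentially the same argument as the paper: write $E=yzx$ with $y,z\in A\setminus A'$, add $m-2$ further vertices of $A\setminus(A'\cup S)$, and use Theorem~\ref{thm: ex of Kss} to find a $K_{m,m}$ in the common good link of these $m$ vertices, which has at least $|B||C|-2m\delta n^2-O(kn)$ edges. You also delete the vertices of $E$ from $B$ and $C$, so that $x$ cannot land in $W_B\cup W_C$; this is a small improvement over the paper's write-up, which only removes $S$.
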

    \begin{proof}
        According to the definition of the based hyperedges, we may assume $E=a_1a_2x$ where $a_1,a_2\in A\setminus A'$.
        Then, we choose $m-2$ vertices $\{a_3,\dots,a_m\}\in A\setminus (A'\cup S\cup \{a_1,a_2,x\})$.
        Notice that $a_i\in A\setminus A'$, we have $d_{\cM }(a_i)\leq 2\delta n^2$. It implies that there are at least $|B|\cdot |C|-2m\delta n^2-kn$ pairs $(b,c)\in (B\setminus S)\times (C\setminus S)$ such that $a_ibc\in E(\cG)$ for every $1\leq i\leq m$.
        These pairs form a copy of $K_{m,m}$ contained in $(B\setminus S)\times(C\setminus S)$.
        Thus, there exists a copy of $\cF_5^S(m)$ in $\cG$ that contains $E$ and is disjoint from $S$.
    \end{proof}
       Suppose that the maximum number of disjoint copies of $\cF_5^S(m)$ in $\cG$ is $t'\leq t$. 
       For every set $S$ of vertices, let
       $$\cE_2^1(\cG-S)=\{E=a_1a_2x\in \cE_2^1 : E\cap S=\emptyset, \text{~ $E$ is not contained in $\cF_5^S(m)$ of $\cG-S$}\}.$$
       Then we have the following result.
       \begin{claim}\label{claim: bound of E2 1 in G-S}
        For every integer $k>0$, and a set of vertices $S\subseteq V(\cG)$ with size $k$, when $n$ is sufficiently large, we have
        $$|\cE_2^1(\cG-S)|\leq 10(2m+1)\delta |\cM |<\delta^{\frac{1}{2}}|\cM|.$$
       \end{claim}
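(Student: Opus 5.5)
We will charge each hyperedge of $\cE_2^1(\cG-S)$ to missing hyperedges at its vertex $a_1\in A'$, as in Claim~\ref{clm: upper bound of E22}. Since every $a_1\in A'$ has $d_{\cM}(a_1)\ge 2\delta n^2$, it suffices to show that for each $u\in A'$ the number of hyperedges of $\cE_2^1(\cG-S)$ containing $u$ is at most $20(2m+1)\delta^2 n^2$, or more generally $O(m\delta)\, d_{\cM}(u)$. Summing over $u\in A'$ then gives $|\cE_2^1(\cG-S)|\le 10(2m+1)\delta|\cM|$. The final inequality $10(2m+1)\delta<\delta^{1/2}$ holds once $\delta$ is small compared to $1/m^2$, which we may assume.

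Fix $u\in A'$ and consider the link graph $L_u$ formed by the pairs $(a_2,x)$ with $ua_2x\in\cE_2^1(\cG-S)$. Here $a_2\in A\setminus A'$, $x\in (B\cup C)\setminus X$, the pair $(a_2,x)$ is proper, and $ua_2x$ lies in no copy of $\cF_5^S(m)$ in $\cG-S$. We will show that such a hyperedge forces many missing hyperedges at $u$.

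Say $x\in B\setminus B'$. We try to build a copy of $\cF_5^S(m)$ in $\cG-S$ using the extra edge $ua_2x$. The two vertices $u,a_2$ play the roles of the pair inside one part, and $x$ is the new vertex $v$. For this we need $m-2$ further vertices $a_3,\dots,a_m\in A\setminus(A'\cup S)$ together with an $m\times m$ grid in $B\times C$ (avoiding $S$ and $x$) all of whose triples with $u,a_2,\dots,a_m$ are hyperedges.

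The vertices $a_2,\dots,a_m$ are typical, so they miss at most $2m\delta n^2$ pairs of $B\times C$ in total. The only obstruction therefore comes from $u$: its good link $L(\cG_g(u))$ must contain a $K_{m,m}$ inside the common link of $a_2,\dots,a_m$. By Theorem~\ref{thm: ex of Kss}, if $L(\cG_g(u))$ has at least $(2m+1)\delta n^2$ edges, then intersecting with the common link leaves $\Omega(\delta n^2)$ edges, hence a $K_{m,m}$. This would contradict the assumption that $ua_2x\in\cE_2^1(\cG-S)$.

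So every $u$ carrying such a hyperedge has $d_{\cG_g}(u)<(2m+1)\delta n^2$, and hence $d_{\cM}(u)\ge |B||C|-(2m+1)\delta n^2\ge n^2/10$. On the other side, trivially $d_{\cE_2^1}(u)\le n^2$. This alone gives only a constant ratio, so we need to exploit the maximality of good hyperedges: since $\cG$ maximizes good hyperedges over partitions, moving $u$ to $B$ or $C$ cannot help. This gives $d_{\cE_2^1}(u)\le d_{\cE_1^{A,B}}(u)+d_{\cE_1^{A,C}}(u)\lesssim 2\,d_{\cG_g}(u)$ up to lower-order terms (the same trick used in the proof of Claim~\ref{claim: E1 2 A B}). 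Hence $d_{\cE_2^1(\cG-S)}(u)\le 2(2m+1)\delta n^2 \le 20(2m+1)\delta\, d_{\cM}(u)$.

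Summing over $u\in A'$, and symmetrically over the other parts and the hyperedges with $x\in C$, completes the argument. The properness of $(a_2,x)$ is used to keep the degenerate cases negligible, and the constant $10$ absorbs the $kn$ losses coming from $S$.

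The main obstacle is the maximality step: making precise that moving $u$ to another part bounds its unbased extra degree by roughly twice its good degree, while correctly accounting for the different types of non-good hyperedges at $u$.
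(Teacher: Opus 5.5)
Your argument is essentially the paper's. You apply K\H{o}v\'ari--S\'os--Tur\'an to the intersection of $u$'s good link with the common link of $m-1$ typical vertices of $A\setminus A'$, which forces $d_{\cG_g}(u)<(2m+1)\delta n^2$ and hence $d_{\cM}(u)\ge |B||C|-(2m+1)\delta n^2$. Maximality of good hyperedges then bounds the number of hyperedges of $\cE_2^1(\cG-S)$ at $u$, and this count is charged to $d_{\cM}(u)$.

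The one point to fix is the constant. Handling $x\in B$ and $x\in C$ by two separate moves is the honest accounting; the paper's one-line bound $|\cE_2^1(\cG-S)(a_1)|\le (2m+1)\delta n^2$ glosses over this. But it gives a per-vertex ratio of roughly $20(2m+1)\delta$, not the $10(2m+1)\delta$ promised in your first paragraph. This is harmless, since only the bound $<\delta^{1/2}|\cM|$ is used afterwards.
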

       \begin{proof}
       Fix $a_1\in A'\setminus S$, and $a_1a_2x\in \cE_2^1(\cG-S)$. Choose $m-2$ other vertices $a_3,\dots,a_m\in A\setminus (A'\cup S\cup \{a_1,a_2,x\})$. We construct an auxiliary bipartite graph $B_{a_1}$ with parts $B\setminus S$ and $C\setminus S$, where $(b,c)\in E(B_{a_1})$ if and only if $a_ibc\in E(\cG)$ for every $2\leq i\leq m$.
       Recall that a hyperedge $E\in E(\cG)$ is good if it intersects each part with exactly one vertex, and $\cG_g$ is the collection of all the good hyperedges.
       Let $L(\cG_g-S)(a_1)$ denote the link graph of $a_1$ in the hypergraph that consists of the good hyperedges that are disjoint with $S$. 
       Then $L(\cG_g-S)(a_1)$ is a bipartite graph with parts $B\setminus S$ and $C\setminus S$.
       Note that if there exists a copy of $K_{m,m}$ in $E(B_{a_1})\cap E(L(\cG_g-S)(a_1))$, then we can find a copy of $\cF_5^S(m)$ in $\cG$ that contains $a_1a_2x$ and is disjoint with $S$,
      contradicting the definition of $\cE_2^1(\cG-S)$.
        As a result, $E(B_{a_1})\cap E(L(\cG_g-S)(a_1))$ is $K_{m,m}$-free. Then by Theorem \ref{thm: ex of Kss}, we have
        $$|E(B_{a_1})\cap E(L(\cG_g-S)(a_1))|\leq \delta n^2.$$
 Similarly to Claim \ref{claim: extend based edge to Sm}, we have $e(B_{a_1})\geq |B|\cdot |C|-2m\delta n^2$.
        It implies that $|\cG_g(a_1)|\leq (2m+1)\delta n^2$, and thus the number of hyperedges in $\cM$ containing $a_1$ is at least 
        $  |B|\cdot |C|-(2m+1)\delta n^2.$

        
        We suppose that $\cG$ has the maximum number of good hyperedges, then we have $|\cE_2^1(\cG-S)(a_1)|\leq (2m+1)\delta n^2$.
        Thus,
        $$\frac{|\cE_2^1(\cG-S)|}{|\cM |}\leq \frac{(2m+1)\delta n^2}{|B|\cdot |C|-(2m+1)\delta n^2}<10(2m+1)\delta.$$
        \end{proof}
        Suppose $H_1,\dots,H_{t'}$ are the $t'$ vertex-disjoint copies of $\cF_5^S(m)$ in $\cG$.
        Let $S_0=\bigcup_{i=1}^{t'} V(H_i)$. By Claim \ref{claim: bound of E2 1 in G-S}, when $n$ is sufficiently large, we have
        $$|\cE_2^1(\cG-S_0)|\leq 10(2m+1)\delta |\cM |.$$
        Every based hyperedge 
intersects with $S_0$, otherwise, by Claim \ref{claim: extend based edge to Sm}, we can find another copy of $\cF_5^S(m)$ in $\cG$ disjoint with $S_0$. We let $\cE_2^2(\cG,S_0)$ be the hyperedges in $\cE_2^2 $ that intersect with $S_0$.  Let $\cE_b $ be the collection of based hyperedges in $\cE $.
        Thus, the hyperedges in $\cE $ that do not intersect with $S_0$ are contained in $\cE_1 \cup \cE_2^2 \cup \cE_2^1(\cG-S_0)$, which has size at most $$(\delta+2\delta^{\frac{1}{2}}+10(2m+1)\delta)|\cM |<3\delta^{\frac{1}{2}}|\cM |.$$
        
        By the lower bound of $e(\cG)$, we have
        $$e(\cG) \geq  e(\cG(t,m,n))\geq e(T_3(n,3))+3t{\lf n/3\rf-t \choose 2}.$$
         We also have
        \begin{align*}
            e(\cG)=&e(\cK)-|\cM |+|\cE |\\
            =&e(\cK)-|\cM |+|\cE_1 \cup \cE_2^1 \cup \cE_2^2(\cG-S_0)|+|\cE_b |+|\cE_2^2(\cG,S_0)|.
        \end{align*}
        Hence, the size of $\cE_b \cup \cE_2^2(\cG,S_0)$ is at least.
        $$3t\binom{\lfloor n/3\rfloor-t}{2}+(1-3\delta^{\frac{1}{2}})|\cM |.$$
        The number of hyperedges in $\cG$ that intersect with two vertices in $S_0$ is at most $\binom{(3m+1)t'}{2}n\leq \binom{(3m+1)t}{2}n$. 
         Hence, the number of hyperedges in $\cE $ that intersect exactly one vertex in $S_0$ is at least
        $$3t\binom{\lfloor n/3\rfloor-t}{2}+(1-3\delta^{\frac{1}{2}})|\cM |-\binom{(3m+1)t}{2}n-\binom{(3m+1)t}{3}.$$
        We set
        $$\cF_i=\{E\in \cE_b \cup \cE_2^2(\cG,S_0): |E\cap H_i|=|E\cap S_0|=1\}.$$
        We have 
        \begin{equation}\label{eq: sum of Fi}
            \sum_{i=1}^{t'}|\cF_i|\geq 3t\binom{\lfloor n/3\rfloor-t}{2}+(1-3\delta^{\frac{1}{2}})|\cM |-\binom{(3m+1)t}{2}n-\binom{(3m+1)t}{3}.
        \end{equation}
        For every copy $H_i$, if there are two based hyperedges $F_1,F_2\in \cF_i$ and $F_1\cap F_2=\emptyset$, then by Claim \ref{claim: extend based edge to Sm}, we can find a copy of $2\cF_5^S(m)$ in $\cG$ that contains $F_1,F_2$ and avoids other $H_j$, $j\neq i$. In this way, we find a copy of $(t'+1)\cdot \cF_5^S(m)$ in $\cG$, a contradiction. 
        
        We first consider the case where $X=A'\cup B'\cup C'\neq \emptyset$, and then $|\cM |\geq \delta n^2$. When $X=\emptyset$, we can similarly deal with the problem by considering only based hyperedges.
        \begin{claim}\label{claim: bound of Fi}
            We have $|\cF_i|\leq 3\frac{(n/3)^2}{2}+(3m+1)\delta^{\frac{1}{2}}|\cM |$. Moreover, when $|\cF_i| \geq (3m+1)\delta^{\frac{1}{2}}|\cM |$, all but at most $2\delta^{\frac{1}{2}}|\cM |$ hyperedges in $\cF_i$ intersect $H_i$ with one fixed vertex. 
        \end{claim}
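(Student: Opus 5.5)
We split $\cF_i$ according to the type of its hyperedges. Put $\cF_i^b=\cF_i\cap\cE_b$ (the based ones) and $\cF_i^u=\cF_i\cap \cE_2^2(\cG,S_0)$ (the unbased ones meeting $S_0$ in a vertex of $H_i$).

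The unbased part is small. Every $E\in\cF_i^u$ has the form $a_1a_2x$ with $a_1\in A'$ and $(a_2,x)$ an improper pair. Its unique vertex in $S_0$ lies in $V(H_i)$, and $|V(H_i)|=3m+1$. Fix one vertex $u\in V(H_i)$.
\begin{itemize}[label={}]
\item If $u\in X$, the argument of Claim~\ref{clm: upper bound of E22} applies: there are at most $3\epsilon^{1/2}n^2$ improper pairs, while $d_{\cM}(u)\ge 2\delta n^2$. Hence at most $2\delta^{1/2}|\cM|$ hyperedges of $\cF_i^u$ contain $u$, and in fact fewer, since $\epsilon^{1/2}\ll\delta^{3/2}$.
\item If $u\notin X$, then $u$ is the vertex $a_2$ or $x$ of such a hyperedge. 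The other vertex of the improper pair is then constrained, and the third vertex lies in $X$, so the count is at most $|X|n=O(\delta^2n^2)$. Since $X\neq\emptyset$ gives $|\cM|\ge\delta n^2$, this is $\le\delta|\cM|$.
\end{itemize}
Summing over the $3m+1$ vertices of $H_i$ gives $|\cF_i^u|\le (3m+1)\delta^{1/2}|\cM|$.

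For the based part we use the observation just before the claim: two vertex-disjoint hyperedges in $\cF_i^b$ would extend, via Claim~\ref{claim: extend based edge to Sm}, to $t'+1$ disjoint copies of $\cF_5^S(m)$. So $\cF_i^b$ is an intersecting family. Each of its hyperedges meets $S_0$ in exactly one vertex of $H_i$ and contains a bad pair outside $X$ in one of $A\setminus A'$, $B\setminus B'$, $C\setminus C'$. We now apply a Hilton–Milner type bound for intersecting $3$-graphs on $n$ vertices: a non-star intersecting family has only $O(n)$ edges. So either $|\cF_i^b|=O(n)$, or all but $O(n)$ hyperedges of $\cF_i^b$ share a common vertex $v$. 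Because each hyperedge has exactly one vertex in $V(H_i)$, the vertex $v$ can be taken in $H_i$ once the family is large; otherwise there are only $O(n)$ such edges. The hyperedges through $v$ contain bad pairs from the three parts, of which there are at most $3\binom{(1/3+\epsilon)n}{2}$. The $\epsilon$ slack and the $O(n)$ error are absorbed into $(3m+1)\delta^{1/2}|\cM|$ using $|\cM|\ge\delta n^2$. This yields $|\cF_i|\le 3\frac{(n/3)^2}{2}+(3m+1)\delta^{1/2}|\cM|$.

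For the "moreover" statement, assume $|\cF_i|\ge (3m+1)\delta^{1/2}|\cM|$. Then $|\cF_i^b|\ge |\cF_i|-|\cF_i^u|$ is at least of order $\delta^{3/2}n^2\gg n$, so $\cF_i^b$ is a star with centre $v\in V(H_i)$ up to $O(n)$ exceptions. The hyperedges missing $v$ are those $O(n)$ exceptions plus the unbased hyperedges at other vertices. The latter number at most $(3m)\cdot\delta|\cM|$ from vertices outside $X$, plus at most $\epsilon^{1/2}$-small amounts from vertices in $X$. In total this is below $2\delta^{1/2}|\cM|$. Note that the unbased edges must be bounded per vertex, not just in total; this is why the vertex-by-vertex bounds above are sharper than $2\delta^{1/2}$.

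The main obstacle is the intersecting-family step. We must verify that a non-star intersecting $3$-graph has only $O(n)$ edges; this follows since any edge avoiding the common structure meets all others, forcing every edge to intersect a fixed triple and a second edge. We must also make sure that the centre $v$ lies in $H_i$ rather than outside $S_0$. If the centre were outside $H_i$, every hyperedge would have to contain both $v$ and a vertex of $H_i$, giving at most $(3m+1)n$ hyperedges, which is negligible.
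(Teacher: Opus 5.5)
\textbf{Overall.} Your architecture matches the paper's. The based part of $\cF_i$ is intersecting, hence concentrated at one vertex of $H_i$ up to $O(n)$ edges, and the unbased part is absorbed into the $\delta^{1/2}|\cM|$ term. Your Hilton--Milner step does what the paper gets more directly: fix one based edge $F_1\ni u_1$; based edges at the other $3m$ vertices of $H_i$ must meet $F_1\setminus\{u_1\}$, so there are at most $6mn$ of them.

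\textbf{Main gap.} The step that fails is the sentence ``the hyperedges through $v$ contain bad pairs from the three parts.'' This is true only if $v\in X$. In the typical case $v\in A\setminus A'$, a triple $vyz$ is already based when $y\in A\setminus A'$ and $z\in B\cup C$ is arbitrary. There are about $|A|(|B|+|C|)\approx\frac29 n^2$ such triples, and they cannot be absorbed into $(3m+1)\delta^{1/2}|\cM|$ when $|\cM|$ is small, e.g.\ of order $\delta n^2$.

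Freeness does not exclude these edges. In $\cG(t,m,n)$ every non-good triple through a vertex of $T$ is an edge, so such a vertex lies in about $\binom{n-1}{2}-|B||C|\approx\frac{7}{18}n^2$ based extra edges. That is far more than $3\frac{(n/3)^2}{2}=\frac{n^2}{6}$. So the first inequality cannot be obtained by counting bad pairs.

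The paper's proof contains exactly the same unsupported assertion: ``the number of hyperedges in $\cF_i$ containing $u_1$ is at most $3\frac{(n/3)^2}{2}$.'' So this is a defect you share with the source, not a deviation from it. The natural repair has two parts:
\begin{itemize}
\item[(i)] replace $3\frac{(n/3)^2}{2}$ by the number of non-good pairs through a vertex, roughly $\frac{7}{18}n^2$;
\item[(ii)] in (\ref{eq: sum of Fi}), use the full value $e(\cG(t,m,n))\approx e(T_3(n,3))+\frac{7}{18}tn^2$ instead of $e(T_3(n,3))+3t\binom{\lfloor n/3\rfloor-t}{2}$.
\end{itemize}
Without (ii) the subsequent averaging breaks.

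\textbf{Secondary slip.} Your unbased estimates rest on ``$\epsilon^{1/2}\ll\delta^{3/2}$'', but $\epsilon=\delta^3$, so $\epsilon^{1/2}=\delta^{3/2}$. A vertex $u\in X\cap V(H_i)$ can carry up to $3\epsilon^{1/2}n^2=\frac32\delta^{1/2}\cdot 2\delta n^2$ edges of $\cE_2^2$. This is the same order as the permitted error. Consequently:
\begin{itemize}
\item summing per-vertex bounds of $\frac32\delta^{1/2}|\cM|$ over $3m+1$ vertices overshoots $(3m+1)\delta^{1/2}|\cM|$;
\item the ``$\epsilon^{1/2}$-small amounts'' in your moreover part are not justified.
\end{itemize}

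The fix is to count the $X$-vertices of $H_i$ jointly. Two vertices of $X$ in the same part have disjoint sets of missing edges, and two vertices in different parts share at most $n$ missing edges. Hence $|\cM|\ge 2\delta n^2|X\cap V(H_i)|-\binom{3m+1}{2}n$. It follows that the unbased edges at $X\cap V(H_i)$ number at most $\frac32\delta^{1/2}|\cM|+O(n)$. The unbased edges at vertices of $H_i$ outside $X$ number at most $3m|X|n=O(m\delta|\cM|)$.

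With this correction, your moreover argument is sound: a large $\cF_i$ forces many based edges, hence a star centred in $H_i$, and the exceptions are $O(n)$ plus unbased edges. It is also cleaner than the paper's Case~2.
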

        \begin{proof}
 We prove this by considering the following two cases.

        \textbf{Case 1:} There is one based hyperedge $F_1$ in $\mathcal{F}_i$ that intersects $H_i$ with $u_1$.
        
        Then the other based hyperedges in $\mathcal{F}_i$ that intersects $H_i$ in a vertex of $V(H_i)\setminus \{u_1\}$ must intersect with $F_1$, thus the number of such hyperedges is at most $6mn$.
        
        There is no based hyperedge in $\mathcal{F}_i$ not intersecting with $F_1$.
        The number of hyperedges in $\cE_2^2(\cG,S_0)$ not intersecting with $F_1$ is at most 
        $\delta^{\frac{1}{2}}|\cM |$, otherwise, by Claim \ref{claim: extend based edge to Sm} and Claim \ref{claim: bound of E2 1 in G-S}, we can find another copy of $\cF_5^S(m)$ in $\cG$ that is disjoint with $S_0$ and $F_1$. 
        Then there exists $t'+1$ vertex-disjoint copies of $\cF_5^S(m)$ in $\cG$, a contradiction.
        It implies that all but at most $\delta^{\frac{1}{2}}|\cM |+6mt'n<2\delta^{\frac{1}{2}}|\cM |$ hyperedges in $\mathcal{F}_i$ intersect $H_i$ with $u_1$. Since the number of hyperedges in $\mathcal{F}_i$ containing $u_1$ is at most $3\frac{(n/3)^2}{2}$, we are done.
        
        \textbf{Case 2:} There are no based hyperedges in $\mathcal{F}_i$.

        Then all hyperedges in $\mathcal{F}_i$ are contained in $\cE_2^2(\cG,S_0)$.
        Then, the upper bound of $|\cF_i|$ is given by Claim \ref{clm: upper bound of E22}.

        If $|\cF_i|\geq \delta^{\frac{1}{2}}|\cM |$, then by the pigeonhole principle, we may assume that there are at least $\frac{1}{3m+1}\cdot(3m+1)\delta^{\frac{1}{2}}|\cM |>\delta^{\frac{1}{2}}|\cM |$ hyperedges in $\mathcal{F}_i$ intersecting $H_i$ with $u_1$.
        Then, with a similar proof as in Claim \ref{claim: bound of E2 1 in G-S}, there exists a copy of $\cF_5^S(m)$ in $\cG$ disjoint with the other $t'-1$ copies of $\cF_5^S(m)$ and containing one hyperedge intersecting $H_i$ with $u_1$. 
        And if there are at least $\delta^{\frac{1}{2}}|\cM |+6mt'n$ hyperedges in $\mathcal{F}_i$ intersecting $H_i$ with $V(H_i)
        \setminus \{u_1\}$, then by Claim \ref{claim: extend based edge to Sm} and Claim \ref{claim: bound of E2 1 in G-S}, there exists another copy of $\cF_5^S(m)$ in $\cG$ disjoint with the previous $t'$ copies of $\cF_5^S(m)$, a contradiction. 
        Thus, all but at most $\delta^{\frac{1}{2}}|\cM |+6mt'n<2\delta^{\frac{1}{2}}|\cM |$ hyperedges in $\mathcal{F}_i$ intersect $H_i$ with $u_1$. This completes the proof.
        \end{proof}
        
        According to (\ref{eq: sum of Fi}) and Claim \ref{claim: bound of Fi}, for every $\mathcal{F}_i$, we have
  \begin{equation*}
            \begin{aligned}
                |\cF_i|&\geq  3t\binom{\lfloor n/3\rfloor-t}{2}+(1-3\delta^{\frac{1}{2}})|\cM |-\binom{(3m+1)t}{2}n\\
                &-\binom{(3m+1)t}{3}-(t-1)\left( 3\frac{(n/3)^2}{2}+(3m+1)\delta^{\frac{1}{2}}|\cM |\right)\\
                &\geq \frac{n^2}{6}-O(n)+\left(1-(t-1)(3m+1)\delta^{1/2}-3\delta^{\frac{1}{2}} \right)|\cM|
                \\
                &> (3m+1)\delta^{1/2}|\cM|.
            \end{aligned}
        \end{equation*}
     The last inequality holds when $\delta^{1/2}<\frac{1}{(t+2)(3m+1)}$ and $n$ is sufficiently large.
       
        Then, Claim \ref{claim: bound of Fi} implies that all but at most $2\delta^{\frac{1}{2}}|\cM |$ hyperedges in $\cF_i$ intersect $H_i$ with one fixed vertex. Let $T$ denote the set of these $t'$ fixed vertices.  Together with the hyperedges in $\cE_1 $, $\cE_2^1 $ and $\cE_2^2 $ that do not intersect with $S_0$, we have proved that all but at most $4t\delta^{\frac{1}{2}}|\cM |$ hyperedges in $\cE $ intersect with these $t$ fixed vertices. Now we have $t'=t$, because the number of hyperedges in $\cE $ containing one vertex is at most $3\binom{n/3}{2}$.

         When $\cM =\emptyset$, all the hyperedges in $\cE $ are based.
        By a similar but simpler analysis, we can also prove that each hyperedge in $\cE $ intersects with a set $T$ of $t$ fixed vertices, we omit the details here.
        Then it remains to maximize the number of hyperedges in $\cE $ intersecting with $T$.

        Note that the number of hyperedges intersecting with $T$ is at most ${n \choose 3}-{n-t \choose 3}$. Let $\cK=\cK(A,B,C)$ denote the complete $3$-partite $3$-uniform hypergraph with parts $A,B,C$.
        Let $\cK(T)=\{E=abc\in E(\cK):E\cap T\neq \emptyset\}$.
        The number of hyperedges in $\cG$ is at most $e(\cK)+{n \choose 3}-{n-t \choose 3}-|\cK(T)|$.
        We let $|A|\geq |B|\geq |C|$ and let $T_A$, $T_B$, $T_C$ be the sets of vertices in $T$ contained in $A,B,C$ respectively. 
        Then we have $|T_A|\geq |T_B|\geq |T_C|$. Otherwise, suppose $|T_A|<|T_C|$, and let $T'$ be the set obtained from $T$ by moving one vertex in $T_C$ to $T_A$. It is easy to check that the number of hyperedges in $\cK(T)$ will not increase.
        Then if $|A|-|C|\geq 2$, we can move one vertex in $A$ to $C$, and it is easy to check that the number of hyperedges in $\cK(A,B,C)$ will increase, while the number of hyperedges in $\cK(T)$ will not increase, a contradiction to the extremality of $\cG$. Thus, we have $|A|-|C|\leq 1$, i.e.,  $\cK$ is balanced. If $|T_A|-|T_C|\geq 2$, then let $T'$ be the set obtained from $T$ by moving one vertex in $T_A$ to $T_C$. It is easy to check the number of hyperedges in $\cK(T')$ is less than $\cK(T)$, a contradiction. Thus, we have $|T_A|-|T_C|\leq 1$. This completes the proof.
\end{proof}

\bigskip
\textbf{Funding}: 
The research of Zhao is supported by the China Scholarship Council (No. 202506210250) and
the National Natural Science Foundation of China (Grant 12571372).

The research of Xin is supported by the National Natural Science Foundation of China (Nos. 12131013 and 12471334), Shaanxi Fundamental Science Research Project for Mathematics and Physics (No. 22JSZ009) and the China Scholarship Council (No. 202406290241). 

The research of Gerbner is supported by the National Research, Development and Innovation Office - NKFIH under the grant KKP-133819.

The research of Miao is supported by the China Scholarship Council (No. 202406770056).

The research of Wang is supported by the China Scholarship Council (No. 202506210200) and
the National Natural Science Foundation of China (Grant 12571372). 

The research of Zhou is supported by the National Natural Science Foundation of China (Nos. 12271337 and 12371347) and the China Scholarship Council (No. 202406890088).

\end{document}